\newcommand{\func}[1]{\operatorname{#1}}
\newtheorem{theorem}{Theorem}[section]
\newtheorem{corollary}[theorem]{Corollary}
\newtheorem{lemma}[theorem]{Lemma}
\newtheorem{proposition}[theorem]{Proposition}
\newtheorem{definition}[theorem]{Definition}
\newtheorem{remark}[theorem]{Remark}
\numberwithin{equation}{section}
\begin{document}
\title{Modified differentials and basic cohomology for Riemannian foliations}
\author[G.~Habib]{Georges Habib}
\address{Lebanese University \\
Faculty of Sciences II \\
Department of Mathematics\\
P.O. Box 90656 Fanar-Matn \\
Lebanon}
\email[G.~Habib]{ghabib@ul.edu.lb}
\author[K.~Richardson]{Ken Richardson}
\address{Department of Mathematics \\
Texas Christian University \\
Fort Worth, Texas 76129, USA}
\email[K.~Richardson]{k.richardson@tcu.edu}
\subjclass[2000]{53C12; 53C21; 58J50; 58J60}
\keywords{Riemannian foliation, basic Laplacian, transverse geometry,
eigenvalue estimate, basic cohomology, Poincar\'e duality}

\begin{abstract}
We define a new version of the exterior derivative on the basic forms of a
Riemannian foliation to obtain a new form of basic cohomology that satisfies
Poincar\'e duality in the transversally orientable case. We use this twisted
basic cohomology to show relationships between curvature, tautness, and
vanishing of the basic Euler characteristic and basic signature.
\end{abstract}

\maketitle

\begin{center}
{\it To the memory of Amine Fawaz}
\end{center} 

\tableofcontents

\section{Introduction}

\subsection{Smooth foliations and basic forms}

Let $(M,\mathcal{F})$ be a smooth, closed manifold of dimension $n$ endowed
with a foliation $\mathcal{F}$ given by an integrable subbundle $L\subset TM$
of rank $p$, with $n=p+q$. The set $\mathcal{F}$ is a partition of $M$ into
immersed submanifolds (\emph{leaves}) such that the transition functions for
the local product neighborhoods (foliation charts) are smooth. The subbundle 
$L=T\mathcal{F}$ is the tangent bundle to the foliation; at each $p\in M$, $%
T_{p}\mathcal{F}=L_{p}$ is the tangent space to the leaf through $p$.

\vspace{0in} 
Basic forms are differential forms on $M$ that locally depend only on the
transverse variables in the foliation charts --- that is, forms $\alpha $
satisfying $X\lrcorner \alpha =X\lrcorner d\alpha =0$ for all $X\in \Gamma
(L)$; the symbol \textquotedblleft $\lrcorner $\textquotedblright\ stands
for interior product. Let $\Omega \left( M,\mathcal{F}\right) \subset \Omega
\left( M\right) $ denote the space of basic forms (see \cite{Re0} for the
original work and the expositions \cite{Re1}, \cite{Mo}, \cite{To}). These
differential forms are preserved by the exterior derivative and are used to
define basic cohomology groups $H_{d}^{\ast }\left( M,\mathcal{F}\right) $
given by 
\begin{equation*}
H_{d}^{k}\left( M,\mathcal{F}\right) =\frac{\ker d_{k}}{\func{image}d_{k-1}}
\end{equation*}%
with%
\begin{equation*}
d_{k}=d:\Omega ^{k}\left( M,\mathcal{F}\right) \rightarrow \Omega
^{k+1}\left( M,\mathcal{F}\right) .
\end{equation*}%
The basic cohomology can be infinite-dimensional, and it can be relatively
trivial. We may also define basic cohomology with values in a foliated
vector bundle; by doing this we gain more topological information about the
leaf space.

Basic cohomology does not necessarily satisfy Poincar\'{e} duality, even if
the foliation is transversally oriented (see \cite{Ca} for the original
example, and see \cite{KTduality}, \cite{To} for a twisted version of Poincar%
\'{e} duality that does hold). 
We emphasize that basic cohomology is a smooth foliation invariant and does
not depend on the choice of metric or any transverse or leafwise geometric
structure. In \cite{ElKacNicol} the authors showed the topological
invariance of basic cohomology.

\subsection{Riemannian foliations and bundle-like metrics}

We assume throughout the paper that the foliation is \emph{Riemannian}; this
means that there is a metric on the local space of leaves --- a
holonomy-invariant transverse metric $g_{Q}$ on the normal bundle $%
Q=TM\diagup L$. The phrase \emph{holonomy-invariant} means the transverse
Lie derivative $\mathcal{L}_{X}g_{Q}$ is zero for all leafwise vector fields 
$X\in \Gamma (L)$. This condition is characterized by the existence of a
unique metric and torsion-free connection $\nabla $ on $Q$ \cite{Re}, \cite%
{Mo}, \cite{To}.

We often assume that the manifold is endowed with the additional structure
of a \emph{bundle-like metric} \cite{Re}, i.e. the metric $g$ on $M$ induces
the metric on $Q\simeq L^{\perp }$. Every Riemannian foliation admits
bundle-like metrics that are compatible with a given $\left( M,\mathcal{F}%
,g_{Q}\right) $ structure. There are many choices, since one may freely
choose the metric along the leaves and also the transverse subbundle $Q$. We
note that a bundle-like metric on a smooth foliation is exactly a metric on
the manifold such that the leaves of the foliation are locally equidistant.
There are topological restrictions to the existence of bundle-like metrics
(and thus Riemannian foliations). Important examples of requirements for the
existence of a Riemannian foliations may be found in \cite{KT1}, \cite{KT2}, 
\cite{Mo}, \cite{To}, \cite{Wo}, \cite{Tar}. One geometric requirement is
that, for any metric on the manifold, the orthogonal projection 
\begin{equation*}
P:L^{2}\left( \Omega \left( M\right) \right) \rightarrow L^{2}\left( \Omega
\left( M,\mathcal{F}\right) \right)
\end{equation*}%
must map the subspace of smooth forms onto the subspace of smooth basic
forms (\cite{PaRi}).

\subsection{The basic Laplacian}

Many researchers have studied basic forms and the basic Laplacian on
Riemannian foliations with bundle-like metrics (see \cite{Re}, \cite{Al}, 
\cite{KT1}, \cite{To}). The basic Laplacian $\Delta _{b}$ for a given
bundle-like metric is a version of the Laplace operator that preserves the
basic forms and that is essentially self-adjoint on the $L^{2}$-closure of
the space of basic forms. The basic Laplacian $\Delta _{b}$ is defined to be%
\begin{equation*}
\Delta _{b}=d\delta _{b}+\delta _{b}d:\Omega \left( M,\mathcal{F}\right)
\rightarrow \Omega \left( M,\mathcal{F}\right) ,
\end{equation*}%
where $\delta _{b}$ is the $L^{2}$-adjoint of the restriction of $d$ to
basic forms: $\delta _{b}=P\delta $ is the ordinary adjoint of $d$ followed
by the orthogonal projection onto the space of basic forms.

The operator $\Delta _{b}$ and its spectrum depend on the choice of the
bundle-like metric and provide invariants of that metric. See \cite{JuRi}, 
\cite{LRi1}, \cite{LRi2}, \cite{PaRi}, \cite{Ri1}, \cite{Ri2} for results.
One may think of this operator as the Laplacian on the space of leaves. This
operator is the appropriate one for physical intuition. For example, the
Laplacian is used in the heat equation, which determines the evolution of
the temperature distribution over a manifold as a function of time. If we
assume that the leaves of the foliation are perfect conductors of heat, then
the basic Laplacian is the appropriate operator that allows one to solve the
heat distribution problem in this situation.

It turns out that the basic Laplacian is the restriction to basic forms of a
second order elliptic operator on all forms, and this operator is not
necessarily symmetric (\cite{PaRi}). Only in special cases is this operator
the same as the ordinary Laplacian. The basic Laplacian $\Delta _{b}$ is
also not the same as the formal Laplacian defined on the local quotient
manifolds of the foliation charts (or on a transversal). This transversal
Laplacian is in general not symmetric on the space of basic forms, but it
does preserve $\Omega \left( M,\mathcal{F}\right) $. The basic heat flow
asymptotics are more complicated than that of the standard heat kernel, but
there is a fair amount known (see \cite{PaRi}, \cite{Ri1}, \cite{Ri2}).

\subsection{The basic adjoint of the exterior derivative and mean curvature}

We assume $\left( M,\mathcal{F},g_{M}\right) $ is a Riemannian foliation of
dimension $p$ and codimension $q$, with bundle-like metric $g_{M}$
compatible with the Riemannian structure $\left( M,\mathcal{F},g_{Q}\right).$
Let 
\begin{equation*}
H=\sum_{i=1}^{p}\pi \left( \nabla _{f_{i}}^{M}f_{i}\right) ,
\end{equation*}%
where $\pi :TM\rightarrow Q$ is the bundle projection and $\left(
f_{i}\right) _{1\leq i\leq p}$ is a local orthonormal frame of $T\mathcal{F}$%
. This is the mean curvature vector field, and its dual one-form is $\kappa
=H^{\flat }$. Let $\kappa _{b}=P\kappa $ be the (smooth) basic projection of
this mean curvature one-form. It turns out that $\kappa _{b}$ is a closed
form whose cohomology class in $H_d^{1}\left( M,\mathcal{F}\right) $ is
independent of the choice of bundle-like metric (see \cite{Al}). Let $\kappa
_{b}\lrcorner $ denote the (pointwise) adjoint of the operator $\kappa
_{b}\wedge $. Clearly, $\kappa _{b}\lrcorner $ depends on the choice of
bundle-like metric $g_{M}$, not simply on the transverse metric $g_{Q}$.

Recall the following expression for $\delta _{b}$ (see \cite{To}, \cite{Al}, 
\cite{PaRi}):%
\begin{eqnarray*}
\delta _{b} &=&P\delta \\
&=&\pm \overline{\ast }d\overline{\ast }+\kappa _{b}\lrcorner \\
&=&\delta _{T}+\kappa _{b}\lrcorner ,
\end{eqnarray*}%
where

\begin{itemize}
\item $\delta _{T}$ is the formal adjoint (with respect to $g_{Q}$) of the
exterior derivative on the transverse local quotients.

\item the pointwise transversal Hodge star operator $\overline{\ast }$ is
defined on all $k$-forms $\gamma $ by%
\begin{equation*}
\overline{\ast }\gamma =\left( -1\right) ^{p\left( q-k\right) }\ast \left(
\gamma \wedge \chi _{\mathcal{F}}\right) ,
\end{equation*}%
with $\chi _{\mathcal{F}}$ being the leafwise volume form, the
characteristic form of the foliation, and $\ast $ being the ordinary Hodge
star operator. Note that $\overline{\ast }^{2}=\left( -1\right) ^{k\left(
q-k\right) }$ on $k$-forms. All that is required for the formula above to be
well-defined is that the Riemannian foliation is transversally oriented. The
formula above is independent of the choice of orientation of the manifold
(equivalently, of the leafwise tangent bundle $T\mathcal{F}$).

\item The sign $\pm $ above only depends on dimensions and the degree of the
basic form.
\end{itemize}

\subsection{Twisted duality for basic cohomology}

Even for transversally oriented Riemannian foliations, Poincar\'{e} duality
does not necessarily hold for basic cohomology.

However, note that $d-\kappa _{b}\wedge $ is also a differential which
defines a cohomology of basic forms. That is, since $d\left( \kappa
_{b}\right) =0$, it follows from the Leibniz rule that $\left( d-\kappa
_{b}\wedge \right) ^{2}=0$ as an operator on forms, and it maps basic forms
to basic forms. On transversally oriented foliations, this differential also
has the property that%
\begin{equation*}
\delta _{b}\overline{\ast }\alpha =\left( -1\right) ^{k+1}\overline{\ast }%
\left( d-\kappa _{b}\wedge \right) \alpha 
\end{equation*}%
on every basic $k$-form $\alpha $ (see \cite{PaRi}). As a result of this
equation and the basic cohomology version of the Hodge theorem (see original
result in \cite{Re} for the case of minimal foliations; see \cite{EKH} for
the more general result and different accounts in \cite{KT1}, \cite{PaRi}),
the transversal Hodge star operator implements an isomorphism between
different kinds of basic cohomology groups (see \cite{KTduality}, \cite{To}):%
\begin{equation*}
H_{d}^{\ast }\left( M,\mathcal{F}\right) \cong H_{d-\kappa _{b}\wedge
}^{q-\ast }\left( M,\mathcal{F}\right) .
\end{equation*}%
This is called \emph{twisted Poincar\'{e} duality}.

\subsection{The basic Dirac operator and spectral rigidity}

We now discuss the construction of the basic Dirac operator (see \cite{DGKY}%
, \cite{GlK}, \cite{PrRi}, \cite{BKR}), a construction which requires a
choice of bundle-like metric. Let $(M,\mathcal{F})$ be a Riemannian manifold
endowed with a Riemannian foliation. Let $E\rightarrow M$ be a foliated
vector bundle (see \cite{KT2}) that is a bundle of $\mathbb{C}\mathrm{l}(Q)$
Clifford modules with compatible connection $\nabla ^{E}$. The \emph{%
transversal Dirac operator} $D_{\mathrm{tr}}$ is the composition of the maps 
\begin{equation*}
\Gamma \left( E\right) \overset{\left( \nabla ^{E}\right) ^{\mathrm{tr}}}{%
\longrightarrow }\Gamma \left( Q^{\ast }\otimes E\right) \overset{\cong }{%
\longrightarrow }\Gamma \left( Q\otimes E\right) \overset{\mathrm{Cliff}}{%
\longrightarrow }\Gamma \left( E\right) ,
\end{equation*}%
where the last map stands for Clifford multiplication, denoted by
\textquotedblleft $\cdot $\textquotedblright , and the operator $\left(
\nabla ^{E}\right) ^{\mathrm{tr}}$ is the projection of $\nabla ^{E}$. The
transversal Dirac operator fixes the basic sections $\Gamma _{b}(E)\subset
\Gamma (E)$ (i.e. $\Gamma _{b}(E)=\{s\in \Gamma (E):\nabla _{X}^{E}s=0$ for
all $X\in \Gamma (L)\}$) but is not symmetric on this subspace. By modifying 
$D_{\mathrm{tr}}$ by a bundle map, we obtain a symmetric and essentially
self-adjoint operator $D_{b}$ on $\Gamma _{b}(E)$. 
We now define 
\begin{eqnarray*}
D_{\mathrm{tr}}~s &=&\sum_{i=1}^{q}e_{i}\cdot \nabla _{e_{i}}^{E}s~, \\
D_{b}s &=&\frac{1}{2}(D_{\mathrm{tr}}+D_{\mathrm{tr}}^{\ast
})s=\sum_{i=1}^{q}e_{i}\cdot \nabla _{e_{i}}^{E}s-\frac{1}{2}\kappa
_{b}^{\sharp }\cdot s~,
\end{eqnarray*}%
where $\{e_{i}\}_{i=1,\cdots ,q}$ is a local orthonormal frame of $Q$. A
direct computation shows that $D_{b}$ preserves the basic sections, is
transversally elliptic, and thus has discrete spectrum (\cite{GlK}, \cite%
{DGKY}, \cite{EK}).

An example of the basic Dirac operator is as follows. Using the bundle $%
\wedge ^{\ast }Q$ as the Clifford bundle with Clifford action $e\cdot
~=e^{\ast }\wedge -e^{\ast }\lrcorner $ in analogy to the ordinary de Rham
operator, we have%
\begin{eqnarray*}
D_{\mathrm{tr}} &=&d+\delta _{T}=d+\delta _{b}-\kappa _{b}\lrcorner :\Omega
^{\mathrm{even}}\left( M,\mathcal{F}\right) \rightarrow \Omega ^{\mathrm{odd}%
}\left( M,\mathcal{F}\right) \\
D_{b} &=&\frac{1}{2}(D_{\mathrm{tr}}+D_{\mathrm{tr}}^{\ast })s=d-\frac{1}{2}%
\kappa _{b}\wedge + \delta _{b}-\frac{1}{2}\kappa _{b}\lrcorner.
\end{eqnarray*}%
One might have incorrectly guessed that $d+\delta _{b}$ is the basic de Rham
operator in analogy to the ordinary de Rham operator, for this operator is
essentially self-adjoint, and the associated basic Laplacian yields basic
Hodge theory that can be used to compute the basic cohomology. The square $%
D_{b}^{2}$ of this operator and the basic Laplacian $\Delta _{b}$ do have
the same principal symbol. In \cite{HabRi}, we showed the invariance of the
spectrum of $D_{b}$ with respect to a change of metric on $M$ in any way
that leaves the transverse metric on the normal bundle intact (this includes
modifying the subbundle $Q\subset TM$, as one must do in order to make the
mean curvature basic, for example). That is,

\begin{theorem}
\label{inv}(In \cite{HabRi}) Let $(M,\mathcal{F})$ be a compact Riemannian
manifold endowed with a Riemannian foliation and basic Clifford bundle $%
E\rightarrow M$. The spectrum of the basic Dirac operator is the same for
every possible choice of bundle-like metric that is associated to the
transverse metric on the quotient bundle $Q$.
\end{theorem}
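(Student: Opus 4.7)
\medskip

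\noindent\textbf{Proof plan.} The strategy is to exhibit a unitary equivalence between the basic Dirac operators $D_b$ and $D_b'$ associated to two bundle-like metrics $g_M$ and $g_M'$ that both induce the given transverse metric $g_Q$. Let $\kappa_b,\kappa_b'$ denote their basic mean curvature one-forms. By the result of \cite{Al} cited above, $[\kappa_b]=[\kappa_b']$ in $H_d^1(M,\mathcal{F})$, so there is a basic function $\phi$ with $\kappa_b'-\kappa_b = d\phi$. This reduces the problem to producing an intertwiner built from $\phi$.

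The transverse Dirac operator $D_{\mathrm{tr}}=\sum_i e_i\cdot\nabla^E_{e_i}$, restricted to basic sections, depends only on the transverse data (the metric $g_Q$, the quotient bundle $Q\cong TM/L$, the Clifford structure, and the foliated connection $\nabla^E$), hence is common to $g_M$ and $g_M'$. Since $D_b = D_{\mathrm{tr}}-\tfrac12\,\kappa_b^\sharp\cdot$, a short Leibniz computation using $D_{\mathrm{tr}}(fs)=(df)^\sharp\cdot s + f\,D_{\mathrm{tr}}s$ (valid for basic $f$ and basic $s$) yields
\[
D_b' \;=\; M_{e^{\phi/2}}\,D_b\,M_{e^{-\phi/2}},
\]
where $M_f$ denotes multiplication by a basic function $f$. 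Thus $D_b$ and $D_b'$ are algebraically conjugate on $\Gamma_b(E)$ via a simple zero-order intertwiner, and the whole question is reduced to checking that this conjugation is implemented by a unitary map between the two $L^2$ Hilbert spaces.

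The remaining step, which I expect to be the main obstacle, is promoting this algebraic conjugation to a genuine unitary equivalence. Concretely, one must establish the integration identity
\[
\int_M f\;d\mathrm{vol}_{g_M'} \;=\; \int_M f\;e^{-\phi}\;d\mathrm{vol}_{g_M}
\]
for every basic function $f$, since then $U:=M_{e^{-\phi/2}}\colon L^2(\Gamma_b(E),g_M')\to L^2(\Gamma_b(E),g_M)$ is unitary and satisfies $U\,D_b' = D_b\,U$, proving isospectrality. Proving the identity requires comparing the two leafwise characteristic forms $\chi_{\mathcal{F}}^{(g_M)}$ and $\chi_{\mathcal{F}}^{(g_M')}$ (and adjusting for a possible change of transverse complement $Q\subset TM$), and recognizing via Rummler's formula $d\chi_{\mathcal{F}}=-\kappa\wedge\chi_{\mathcal{F}}+\varphi_0$ that the Jacobian between the two volume forms, when paired with basic functions, is precisely $e^{-\phi}$. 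This is essentially the same computation that produces $\phi$ in the Álvarez-type argument for metric-independence of $[\kappa_b]$. With the integration identity in hand, $D_b$ and $D_b'$ are unitarily equivalent and hence have identical spectra.
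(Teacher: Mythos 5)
Your plan matches the approach cited from \cite{HabRi} and summarized in Section~\ref{Formula Section} of the paper: the key identity is $D_b' = e^{h/2}D_b e^{-h/2}$, where $\kappa_b' = \kappa_b + dh$, and isospectrality follows from this conjugation. Your derivation of the conjugation formula is correct, as is your observation that $D_{\mathrm{tr}}$ restricted to basic sections depends only on the transverse data (note this uses that $\nabla^E_X s$ for basic $s$ depends only on the class of $X$ in $Q$, so the frame-independent expression $\sum \pi(e_i)\cdot\nabla^E_{\pi(e_i)}s$ is the same for any $g_M$ inducing $g_Q$).

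One small refinement is worth flagging. You call the integration identity $\int_M f\,d\mathrm{vol}_{g_M'} = \int_M f\,e^{-\phi}\,d\mathrm{vol}_{g_M}$ (for basic $f$) the "main obstacle" and sketch a proof via Rummler's formula, but in fact it comes for free once you have the conjugation formula plus the known self-adjointness of $D_b$ and $D_b'$ on their respective $L^2$-spaces. Indeed, since the pointwise Hermitian metric on $E$ is unchanged, there is a positive basic function $\nu$ with $\langle u,v\rangle_{g_M'} = \int (u,v)_E\,\nu\,d\mathrm{vol}_{g_M}$ for basic $u,v$; the adjoint of $D_{\mathrm{tr}}$ with respect to this inner product is $\nu^{-1}D_{\mathrm{tr}}^\ast\,\nu = D_{\mathrm{tr}} - (\kappa_b - d\ln\nu)^\sharp\cdot\,$, and comparing with $(D_{\mathrm{tr}})^{\ast'} = D_{\mathrm{tr}} - (\kappa_b')^\sharp\cdot\,$ forces $d\ln\nu = -d\phi$, hence $\nu = e^{-\phi}$ up to a multiplicative constant. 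Alternatively, one can bypass the unitarity entirely: $D_b$ and $D_b'$ are self-adjoint with discrete spectrum (transversally elliptic on a compact manifold), and conjugation by the bounded invertible multiplication $M_{e^{\phi/2}}$ already identifies eigenspaces with matching eigenvalues and multiplicities. Either way, your argument is sound and in line with the paper's.
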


We emphasize that the basic Dirac operator $D_{b}$ depends on the choice of
bundle-like metric, not merely on the Clifford structure and Riemannian
foliation structure, since both projections $T^{\ast }M\rightarrow Q^{\ast }$
and $P$ as well as $\kappa _{b}\lrcorner $ depend on the leafwise metric. It
is well-known that the eigenvalues of the basic Laplacian $\Delta _{b}$
(closely related to $D_{b}^{2}$) depend on the choice of bundle-like metric;
for example, in \cite[Corollary 3.8]{Ri2}, it is shown that the spectrum of
the basic Laplacian on functions determines the $L^{2}$-norm of the mean
curvature on a transversally oriented foliation of codimension one. 
This is one reason why the invariance of the spectrum of the basic Dirac
operator is a surprise.

\begin{corollary}
\label{basicHarmonicChoiceCorollary}Let $(M,\mathcal{F})$ be a compact
Riemannian manifold endowed with a Riemannian foliation and basic Clifford
bundle $E\rightarrow M$. In calculating the spectrum of the basic Dirac
operator, one may assume the bundle-like metric is chosen so that the mean
curvature form is basic-harmonic.
\end{corollary}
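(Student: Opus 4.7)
The plan is to deduce the corollary from the spectral invariance given by Theorem \ref{inv} together with the known existence of bundle-like metrics whose basic mean curvature is harmonic.

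First I would invoke Theorem \ref{inv}: the spectrum of $D_b$ depends only on the transverse data $(M,\mathcal{F},g_Q)$ and the foliated Clifford bundle $E$, not on the particular bundle-like metric $g_M$ realizing $g_Q$. Thus, in computing $\mathrm{spec}(D_b)$, one is free to replace the originally given $g_M$ by any other bundle-like metric compatible with $(M,\mathcal{F},g_Q)$, i.e.\ obtained by modifying the leafwise metric and the transverse complement $Q\subset TM$ while keeping the holonomy-invariant transverse metric fixed.

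Second, I would invoke the existence theorem of Dom\'{i}nguez (in the spirit of \cite{Al}): on any Riemannian foliation of a compact manifold, there is a bundle-like metric $g_M$, compatible with the prescribed transverse metric $g_Q$, for which the basic part $\kappa_b$ of the mean curvature one-form is basic-harmonic. Since $d\kappa_b=0$ holds automatically for any bundle-like metric (as noted earlier in the introduction), the content of this step is the production of a metric for which $\delta_b\kappa_b=0$ as well; this is achieved by a well-chosen leafwise conformal rescaling combined with an adjustment of $Q$, and it is the sole nontrivial input.

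Combining the two ingredients completes the proof: given any starting bundle-like metric, pass to a Dom\'{i}nguez metric with basic-harmonic $\kappa_b$, and apply Theorem \ref{inv} to conclude that $\mathrm{spec}(D_b)$ is the same for both metrics. The only genuine obstacle is the existence result for the basic-harmonic mean curvature metric; once that is quoted, the corollary is immediate from the spectral rigidity statement.
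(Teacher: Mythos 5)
Your proof is correct and follows exactly the paper's argument: invoke Theorem \ref{inv} to reduce to any compatible bundle-like metric, then quote the existence of a bundle-like metric with basic-harmonic mean curvature (due to Dom\'{i}nguez \cite{Do}, and also March--Min-Oo--Ruh \cite{MarMinooRuh} and Mason \cite{Ma}, which the paper cites alongside). Nothing to add.
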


\begin{proof}
\noindent By Theorem \ref{inv}, we may choose the bundle-like metric in any
way that restricts to the given metric on $Q$. In \cite{Do} and \cite%
{MarMinooRuh, Ma}, the researchers showed that there exists such a metric
such that the mean curvature is basic-harmonic. 
\end{proof}

\subsection{Known tautness results}

A Riemannian foliation $\left( M,\mathcal{F}\right) $ is called \textbf{%
minimalizable} or \textbf{geometrically taut} if there exists a Riemannian
metric on $M$ for which all leaves are minimal submanifolds. We will use the
word \textbf{taut} for this property. In \cite{Ghys}, E. Ghys proved that
every Riemannian foliation on a simply connected manifold is taut. X. Masa
showed in \cite{Masa} that a transversally oriented Riemannian foliation of
codimension $q$ is taut if and only if $H_{d}^{q}\left( M,\mathcal{F}\right)
\neq \left\{ 0\right\} $ (see also \cite{KT3}, \cite{KTduality}). Moreover,
J. \`{A}lvarez-Lopez \cite{Al} characterized the tautness by the fact that
the cohomology class of the basic mean curvature form vanishes in $%
H_{d}^{1}\left( M,\mathcal{F}\right) \subseteq H^{1}\left( M\right) $;
therefore, every Riemannian foliation on a manifold with vanishing first
Betti number is taut. In \cite{KTduality}, the results of the F. W. Kamber
and Ph. Tondeur suffice to prove that a foliation is taut if and only if the
basic cohomology groups satisfy Poincar\'{e} duality. In \cite{Noz1} and 
\cite{Noz2}, H. Nozawa showed that the \`{A}lvarez class $\left[ \kappa _{b}%
\right] \in H^{1}\left( M\right) $ is stable with respect to continuous
perturbations of Riemannian foliations, and he also showed that the line
integral of $\kappa _{b}$ over a closed curve is always an algebraic integer
if the fundamental group of $M$ is polycyclic or has polynomial growth.
Under such conditions, a family of Riemannian foliations consists entirely
of taut foliations or of nontaut foliations. As a consequence, he proved
that if $\left( M,\mathcal{F}\right) $ is a codimension two Riemannian
foliation with $\pi _{1}\left( M\right) $ of polynomial growth, then the
foliation is taut.

Certain geometric conditions are also known to force the tautness condition.
For example, J. Hebda showed in \cite{Heb} that if the transversal Ricci
curvature satisfies $\mathrm{Ric}\left(X,X\right) \geq a\left( q-1\right)
\left\vert X\right\vert ^{2}$ for some $a>0$ and all vectors $X\in \Gamma(Q)$%
, then the foliation is taut. In a result by S. D. Jung that was later
extended by the authors (see \cite{Ju} and \cite{HabRi}), if the first
eigenvalue $\lambda $ of the basic spin Dirac operator satisfies equality in
the general eigenvalue bound%
\begin{equation*}
\lambda ^{2}\geq \frac{q}{4\left( q-1\right) }\inf_{M}\left( \mathrm{Scal}%
_{M}-\mathrm{Scal}_{\mathcal{F}}+\left\vert A_{Q}\right\vert ^{2}+\left\vert
T_{\mathcal{F}}\right\vert ^{2}\right) ,
\end{equation*}%
then $\mathcal{F}$ is taut and there exists a transversal Killing spinor.
Here, $A_{Q}$ and $T_{\mathcal{F}}$ denote the O'Neill tensors of the
foliation; see \cite{HabRi} for details.

\subsection{Main results and outline}

In this paper we introduce the new cohomology $\widetilde{H}^{\ast }\left( M,%
\mathcal{F}\right) $ (called the \textbf{twisted basic cohomology}) of basic
forms that uses $\widetilde{d}:=d-\frac{1}{2}\kappa _{b}\wedge $ as a
differential. Recall that the basic de Rham operator is $D_{b}=\widetilde{d}+%
\widetilde{\delta },$ where $\widetilde{\delta }:=\delta _{b}-\frac{1}{2}%
\kappa _{b}\lrcorner .$ We show in Section \ref{Formula Section} that the
corresponding Betti numbers and eigenvalues of the twisted basic Laplacian $%
\widetilde{\Delta }:=\widetilde{d}\,\widetilde{\delta }+\widetilde{\delta }%
\widetilde{d}$ are independent of the choice of a bundle-like metric. In
Theorem \ref{PoincaredualityTheorem} we show that the twisted basic
Laplacian commutes with the transversal Hodge star operator and thus the
twisted basic cohomology satisfies Poincar\'{e} duality. As a corollary, we
deduce that an odd codimension transversally oriented Riemannian foliation
has zero basic Euler characteristic (Corollary \ref%
{EulerZeroInOddCodimCorollary}). In Section \ref{sec:taut}, we prove that
taut foliations give an isomorphism between the new cohomology group and the
basic one. This lets us say that the tautness property is characterized by
the fact that the top-dimensional twisted basic cohomology group is
non-zero. In Section \ref{basicSignatureSection}, we define the basic
signature operator of a Riemannian foliation. Note that the basic signature
was previously defined in \cite{EK2} and used in \cite{HatDj}, but in both
cases the definitions and results hold only for the special case of
Riemannian foliations with minimal leaves.

Using some computations with the Lie derivative, we establish in Section \ref%
{Wein} a Weitzenb\"{o}ck-Bochner formula for the twisted basic Laplacian
(see Proposition \ref{WeitzenbockProposition}), which is more simple that
the corresponding formula for the ordinary basic Laplacian. With the help of
this formula, we deduce various corollaries relating transversal Ricci and
sectional curvature to tautness and basic cohomology. In particular, we
deduce direct proofs of known results of Hebda (see Theorem \ref{Heb}) and
of El Kacimi and others (see Theorem \ref{EK}).

We also study the case of codimension two Riemannian foliations. We prove in
Proposition \ref{codim2Corollary} that for nontaut foliations, the ordinary
basic cohomology satisfies $H_d^{0}\left( M,\mathcal{F}\right) \cong
H_d^{1}\left( M,\mathcal{F}\right) \cong \mathbb{R}$, $H_d^{2}\left( M,%
\mathcal{F}\right) =\left\{ 0\right\}$. Immediate consequences include for
nontaut foliations:

\begin{itemize}
\item The twisted basic cohomology groups are all trivial.

\item The basic Euler characteristic and basic signature are zero (Corollary %
\ref{sigcodim2}).


\item If $\pi _{1}\left( M\right) $ is polycyclic or has polynomial growth,
then the basic Euler characteristic and basic signature are stable with
respect to deformations of $\left( M,\mathcal{F}\right) $ through continuous
families of Riemannian foliations, and the dimensions of all basic
cohomology groups are also stable. See Corollary \ref{deformationCorollary}.
\end{itemize}

To illustrate our results, we treat examples in Section \ref{ExampleSection}.

The second author would like to thank the Mathematisches Forschungsinstitut
Oberwolfach and the Centre de Recerca Matem\`{a}tica (CRM), Barcelona, and
the Department of Mathematics at TCU for hospitality and support during the
preparation of this work.

\section{Modified differentials, Laplacians, and basic cohomology\label%
{Formula Section}}

\vspace{0in} 
Unlike the ordinary and well-studied basic Laplacian, the eigenvalues of $%
\widetilde{\Delta }=D_{b}^{2}$ are invariants of the Riemannian foliation
structure alone and independent of the choice of compatible bundle-like
metric. The operators $\widetilde{d}$ and $\widetilde{\delta }$ have the
following interesting properties.

\begin{lemma}
$\widetilde{\delta }$ is the formal adjoint of $\widetilde{d}.$
\end{lemma}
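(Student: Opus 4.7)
The plan is to verify the adjoint relation $\langle \widetilde{d}\alpha,\beta\rangle = \langle \alpha, \widetilde{\delta}\beta\rangle$ for basic forms $\alpha \in \Omega^{k}(M,\mathcal{F})$ and $\beta \in \Omega^{k+1}(M,\mathcal{F})$ by splitting $\widetilde{d}$ and $\widetilde{\delta}$ into their constituent pieces and invoking two independent adjoint identities. Concretely, I would write
\begin{equation*}
\widetilde{d} = d - \tfrac{1}{2}\kappa_b\wedge, \qquad \widetilde{\delta} = \delta_b - \tfrac{1}{2}\kappa_b\lrcorner,
\end{equation*}
and handle the two pairs $(d,\delta_b)$ and $(\kappa_b\wedge,\kappa_b\lrcorner)$ separately.

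For the first pair, the adjoint relation $\langle d\alpha,\beta\rangle = \langle \alpha,\delta_b\beta\rangle$ on basic forms follows from the definition $\delta_b = P\delta$: since $\alpha$ is basic, $\langle \alpha,\delta\beta\rangle = \langle \alpha,P\delta\beta\rangle = \langle \alpha,\delta_b\beta\rangle$, using that the orthogonal projection $P$ onto basic forms is self-adjoint and that $(I-P)\delta\beta$ is orthogonal to the basic $\alpha$. For the second pair, $\kappa_b\lrcorner$ is defined as the pointwise adjoint of $\kappa_b\wedge$, so integrating the pointwise identity $\langle \kappa_b\wedge\alpha,\beta\rangle_x = \langle \alpha,\kappa_b\lrcorner\beta\rangle_x$ over $M$ gives the $L^2$ statement at once.

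Combining these two identities linearly with the coefficient $-\tfrac{1}{2}$ yields
\begin{equation*}
\langle \widetilde{d}\alpha,\beta\rangle = \langle d\alpha,\beta\rangle - \tfrac{1}{2}\langle \kappa_b\wedge\alpha,\beta\rangle = \langle \alpha,\delta_b\beta\rangle - \tfrac{1}{2}\langle \alpha,\kappa_b\lrcorner\beta\rangle = \langle \alpha,\widetilde{\delta}\beta\rangle,
\end{equation*}
which is precisely the assertion. There is essentially no obstacle here: the only subtle point is making sure that the first adjoint relation is genuinely the basic one (i.e.\ justifying why $\delta$ can be replaced by $\delta_b = P\delta$ against a basic test form), and I would spell this out by one line about self-adjointness of $P$. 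No assumption on $\kappa_b$ beyond its being a basic one-form is needed for this lemma; the closedness of $\kappa_b$ (which is what makes $\widetilde{d}^{2}=0$) is not used at this step.
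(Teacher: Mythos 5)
Your proof takes essentially the same route as the paper: both decompose $\widetilde{d}$ into $d$ and $-\tfrac12\kappa_b\wedge$, take adjoints termwise, and use that $\delta_b = P\delta$ is the adjoint of $d$ on basic forms and that $\kappa_b\lrcorner$ is the pointwise adjoint of $\kappa_b\wedge$. You merely spell out the self-adjointness of the basic projection $P$ a bit more explicitly, which the paper leaves implicit.
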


\begin{proof}
We see%
\begin{equation*}
\left( \widetilde{d}\right) ^{\ast }=\left( d-\frac{1}{2}\kappa _{b}\wedge
\right) ^{\ast }=\delta _{b}-\frac{1}{2}\left( \kappa _{b}\lrcorner \right) =%
\widetilde{\delta },
\end{equation*}%
where the raised $\ast $ denotes formal $L^{2}$-adjoint on the space of
basic forms (not the same as the adjoint on the space of all forms).
\end{proof}

\begin{lemma}
The maps $\widetilde{d}$ and $\widetilde{\delta }$ are differentials; that
is, $\widetilde{d}^{2}=0$, $\widetilde{\delta }^{2}=0$. As a result, $%
\widetilde{d}$ and $\widetilde{\delta }$ commute with $\widetilde{\Delta }%
=D_{b}^{2}$, and $\ker \left( \widetilde{d}+\widetilde{\delta }\right) =\ker
\left( \widetilde{\Delta }\right) $.
\end{lemma}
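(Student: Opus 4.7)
My plan is to verify $\widetilde{d}^{2}=0$ by direct expansion, deduce $\widetilde{\delta}^{2}=0$ by taking the formal adjoint using the preceding lemma, and then obtain both the commutation with $\widetilde{\Delta}$ and the kernel identification by standard Hodge-theoretic manipulations.

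First I would expand
\[
\widetilde{d}^{2}=\left(d-\tfrac{1}{2}\kappa_{b}\wedge\right)^{2}=d^{2}-\tfrac{1}{2}\bigl(d\circ(\kappa_{b}\wedge)+(\kappa_{b}\wedge)\circ d\bigr)+\tfrac{1}{4}(\kappa_{b}\wedge)^{2}.
\]
The first term vanishes; the last term vanishes because $\kappa_{b}$ is a $1$-form, so $\kappa_{b}\wedge\kappa_{b}=0$; and the middle anticommutator equals $d\kappa_{b}\wedge\,\cdot\,$ by the Leibniz rule, which is zero because $\kappa_{b}$ is closed (as recalled earlier in the excerpt after Álvarez-López's result). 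Hence $\widetilde{d}^{2}=0$.

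For $\widetilde{\delta}^{2}=0$, rather than redoing the calculation I would just invoke the adjoint relation $\widetilde{\delta}=(\widetilde{d})^{\ast}$ from the preceding lemma: then $\widetilde{\delta}^{2}=(\widetilde{d}^{\ast})^{2}=(\widetilde{d}^{2})^{\ast}=0$ as operators on basic forms. (Alternatively one can mirror the above expansion using $(\kappa_{b}\lrcorner)^{2}=0$ and the identity $\delta_{b}(\kappa_{b}\lrcorner)+(\kappa_{b}\lrcorner)\delta_{b}=-(\delta_{b}\kappa_{b})\lrcorner\,\cdot\,$ combined with the fact $\delta_{b}\kappa_{b}=0$ that follows from adjointness, but the adjoint argument is cleaner.)

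Given $\widetilde{d}^{2}=\widetilde{\delta}^{2}=0$, commutation with $\widetilde{\Delta}=\widetilde{d}\widetilde{\delta}+\widetilde{\delta}\widetilde{d}$ is immediate: for example,
\[
\widetilde{d}\,\widetilde{\Delta}-\widetilde{\Delta}\,\widetilde{d}=\widetilde{d}^{2}\widetilde{\delta}+\widetilde{d}\widetilde{\delta}\widetilde{d}-\widetilde{d}\widetilde{\delta}\widetilde{d}-\widetilde{\delta}\widetilde{d}^{2}=0,
\]
and the same for $\widetilde{\delta}$. Finally, for the kernel identification the inclusion $\ker(\widetilde{d}+\widetilde{\delta})\subseteq\ker\widetilde{\Delta}$ follows from $\widetilde{\Delta}=(\widetilde{d}+\widetilde{\delta})^{2}-(\widetilde{d}^{2}+\widetilde{\delta}^{2})=(\widetilde{d}+\widetilde{\delta})^{2}$, while the reverse inclusion uses the $L^{2}$ pairing on basic forms together with the previous lemma: if $\widetilde{\Delta}\alpha=0$, then
\[
0=\langle\widetilde{\Delta}\alpha,\alpha\rangle=\|\widetilde{d}\alpha\|^{2}+\|\widetilde{\delta}\alpha\|^{2},
\]
forcing $\widetilde{d}\alpha=\widetilde{\delta}\alpha=0$. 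The only subtle point, and hence the one I would state most carefully, is the appeal to $d\kappa_{b}=0$ in the opening calculation, since that closedness is exactly what makes the perturbation $\widetilde{d}$ square to zero.
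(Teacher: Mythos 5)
Your proof is correct and is essentially the paper's argument made explicit: the paper's entire proof is the one-line remark that the claim ``follows from the fact that $\kappa_b$ is a closed one-form,'' and you have simply supplied the routine expansion (the Leibniz rule giving the anticommutator $d\circ(\kappa_b\wedge)+(\kappa_b\wedge)\circ d=(d\kappa_b)\wedge$, the adjoint argument for $\widetilde{\delta}^2=0$, and the standard Hodge-theoretic identity $\|\widetilde{d}\alpha\|^2+\|\widetilde{\delta}\alpha\|^2=\langle\widetilde{\Delta}\alpha,\alpha\rangle$ for the kernel equality) that the authors chose to omit. No gaps; same underlying idea.
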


\begin{proof}
This follows from the fact that $\kappa _{b}$ is a closed one-form \cite{Al}.
\end{proof}

Let $\Omega^{k}\left( M,\mathcal{F}\right) $ denote the space of basic $k$%
-forms (either set of smooth forms or $L^{2}$-completion thereof), let $%
\widetilde{d}^{k}$ and $\widetilde{\delta _{b}}^{k}$ be the restrictions of $%
\widetilde{d}$ and $\widetilde{\delta _{b}}$ to $k$-forms, and let $%
\widetilde{\Delta }^{k}$ denote the restriction of $D_{b}^{2}$ to basic $k$%
-forms.

\begin{proposition}
(Hodge decomposition)\label{Hodge Theorem} We have 
\[\Omega^{k}\left( M,%
\mathcal{F}\right) =\mathrm{image}\left( \widetilde{d}^{k-1}\right) \oplus 
\mathrm{image}\left( \widetilde{\delta _{b}}^{k+1}\right) \oplus \ker \left( 
\widetilde{\Delta }^{k}\right) ,
\]
an $L^{2}$-orthogonal direct sum. Also, $%
\ker \left( \widetilde{\Delta }^{k}\right) $ is finite-dimensional and
consists of smooth forms.
\end{proposition}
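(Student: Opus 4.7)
The plan is to reduce the statement to the standard Hodge theorem for basic forms for a transversally elliptic, essentially self-adjoint operator. First, I would observe that $\widetilde{\Delta}^k = D_b^2$ restricted to basic $k$-forms. Since $D_b$ is essentially self-adjoint on the $L^2$-closure of $\Omega^k(M,\mathcal{F})$ with discrete spectrum and finite-dimensional smooth eigenspaces (Glazebrook-Kamber, El Kacimi, and the basic Hodge theory established in the cited works of Park-Richardson, El Kacimi-Hector, Kamber-Tondeur), the same holds for $\widetilde{\Delta}^k$: its kernel is finite-dimensional, consists of smooth basic $k$-forms, and $\widetilde{\Delta}^k$ has closed range equal to $(\ker \widetilde{\Delta}^k)^{\perp}$. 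This disposes of the last sentence of the proposition.

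Next I would verify pairwise $L^2$-orthogonality of the three summands. Using the adjoint relation from the preceding lemma and $\widetilde{d}^2 = \widetilde{\delta}^2 = 0$, we obtain
\[
\langle \widetilde{d}\alpha, \widetilde{\delta}\beta\rangle = \langle \widetilde{d}^2\alpha, \beta\rangle = 0.
\]
For the harmonic part, given $\eta\in\ker \widetilde{\Delta}^k$, since $M$ is closed and $\eta$ is smooth, the identity
\[
0 = \langle \widetilde{\Delta}\eta,\eta\rangle = \|\widetilde{d}\eta\|^2 + \|\widetilde{\delta}\eta\|^2
\]
forces $\widetilde{d}\eta = \widetilde{\delta}\eta = 0$, so that $\eta \perp \widetilde{d}(\Omega^{k-1})$ and $\eta \perp \widetilde{\delta}(\Omega^{k+1})$ by adjunction.

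The remaining task is to exhibit the decomposition itself. Write $\Omega^k(M,\mathcal{F}) = \ker \widetilde{\Delta}^k \oplus (\ker \widetilde{\Delta}^k)^{\perp}$. Given $\alpha \in (\ker \widetilde{\Delta}^k)^{\perp}$, the closed-range statement produces $\beta\in\Omega^k(M,\mathcal{F})$ (in the appropriate Sobolev completion) with $\alpha = \widetilde{\Delta}\beta$; transverse elliptic regularity upgrades $\beta$ to a smooth basic form. Then
\[
\alpha = \widetilde{d}\,\widetilde{\delta}\beta + \widetilde{\delta}\,\widetilde{d}\beta \in \mathrm{image}(\widetilde{d}^{k-1}) + \mathrm{image}(\widetilde{\delta_b}^{k+1}),
\]
and combined with the orthogonality step above this gives the desired orthogonal direct sum.

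The main obstacle is the closed-range/regularity step for $\widetilde{\Delta}^k$. This is not immediate from transverse ellipticity of the principal symbol alone: one needs essential self-adjointness of the associated first-order operator $D_b$ on basic sections and the Rellich-type compactness of the embedding of the basic Sobolev space into $L^2(\Omega^k(M,\mathcal{F}))$. Both are available in the Riemannian foliation setting from the references cited for $D_b$, so I would simply invoke them rather than reproving the analytic machinery; everything else is the standard Hodge-theoretic bookkeeping adapted from $(d,\delta)$ to $(\widetilde{d},\widetilde{\delta})$, which is legitimate because $\widetilde{d}^2=0$ and $\widetilde{\delta} = (\widetilde{d})^*$ are the only algebraic inputs used.
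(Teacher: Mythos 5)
Your proposal is correct and is essentially the same route the paper takes: the paper simply remarks that the proof is analogous to the basic Hodge decompositions in Park--Richardson, Tondeur, and Kamber--Tondeur and omits the details, and what you have written is precisely the standard Hodge-theoretic bookkeeping (orthogonality from $\widetilde{d}^2=\widetilde{\delta}^2=0$ and $\widetilde{\delta}=\widetilde{d}^{\,*}$, plus finite-dimensionality, smoothness, and closed range from transversal ellipticity and essential self-adjointness of $D_b$ together with basic Rellich compactness) that those references carry out. The only algebraic inputs you use are exactly those available here, so the reduction is legitimate.
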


\begin{proof}
The proof is very similar to the proof in \cite{PaRi} for the corresponding
fact for the basic Laplacian and in \cite{To}, \cite{KT1} for the basic mean
curvature case. For that reason, we do not include it here.
\end{proof}

We call $\ker \left( \widetilde{\Delta }\right) $ the space of $\widetilde{%
\Delta }$-harmonic forms. In the remainder of this section, we assume that
the foliation is transversally oriented so that the transversal Hodge $%
\overline{\ast }$ operator is well-defined.

\begin{lemma}
(clear) The operator $\overline{\ast }^{2}=\left( -1\right) ^{k\left(
q-k\right) }$ on $k$-forms, and the adjoint of $\overline{\ast }$ is $\left(
-1\right) ^{k\left( q-k\right) }\overline{\ast }$.
\end{lemma}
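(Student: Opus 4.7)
The plan is to reduce both statements to the classical properties of the Hodge star on an oriented Euclidean vector space, by identifying $\overline{\ast}$ pointwise with the ordinary Hodge star on the transverse fibre $(Q_x,g_Q)$. I would work in a local adapted orthonormal frame $f_1,\ldots,f_p$ of $T\mathcal{F}$ and $e_1,\ldots,e_q$ of $Q$, so that $\chi_{\mathcal{F}}=f^1\wedge\cdots\wedge f^p$, and fix the compatible ambient orientation with $\mathrm{vol}_M=e^1\wedge\cdots\wedge e^q\wedge\chi_{\mathcal{F}}$. For a basic elementary $k$-form $e^I$ (with $I\subset\{1,\ldots,q\}$, $|I|=k$), I would compute $\ast(e^I\wedge\chi_{\mathcal{F}})$ directly from the defining identity $\alpha\wedge\ast\alpha=|\alpha|^2\,\mathrm{vol}_M$: since $e^I\wedge\chi_{\mathcal{F}}$ is a decomposable unit form of degree $k+p$, its Hodge dual must be $\pm\,e^{I^c}$, and the sign works out to $(-1)^{p(q-k)}\,\mathrm{sgn}(I,I^c)$ after reordering $e^I\wedge\chi_{\mathcal{F}}\wedge e^{I^c}$ to $e^I\wedge e^{I^c}\wedge\chi_{\mathcal{F}}$ (the former sign coming from commuting $e^{I^c}$ past the $p$-form $\chi_{\mathcal{F}}$, and $\mathrm{sgn}(I,I^c)$ being the usual shuffle sign).

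Multiplying by the prefactor $(-1)^{p(q-k)}$ in the definition of $\overline{\ast}$ exactly cancels the $p$-dependent sign, giving $\overline{\ast}\,e^I=\mathrm{sgn}(I,I^c)\,e^{I^c}$. That is, on basic (equivalently, pure transverse) $k$-forms, $\overline{\ast}$ is nothing but the usual Hodge star of the oriented $q$-dimensional Euclidean space $(Q_x,g_Q)$. The first identity $\overline{\ast}^2=(-1)^{k(q-k)}$ on $k$-forms is then the standard Hodge star squared formula, equivalent to $\mathrm{sgn}(I,I^c)\,\mathrm{sgn}(I^c,I)=(-1)^{k(q-k)}$, the sign of a $(k,q-k)$-shuffle.

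For the adjoint, I would observe that the map $\overline{\ast}\colon\Lambda^kQ^*_x\to\Lambda^{q-k}Q^*_x$ sends the orthonormal basis $\{e^I\}$ bijectively to $\{\pm\,e^{I^c}\}$, so it is a fibrewise isometry, whence its $L^2$-adjoint coincides with its pointwise inverse. From $\overline{\ast}_{q-k}\circ\overline{\ast}_k=(-1)^{k(q-k)}\,\mathrm{Id}$ one immediately reads off $(\overline{\ast}_k)^{-1}=(-1)^{k(q-k)}\,\overline{\ast}_{q-k}$, which is precisely the claim that the adjoint of $\overline{\ast}$ equals $(-1)^{k(q-k)}\overline{\ast}$. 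The only real work is the sign bookkeeping in the first step; once $\overline{\ast}$ is recognised as the ordinary transverse Hodge star on $(Q_x,g_Q)$, both assertions are standard linear algebra, which accounts for the author's label ``(clear).''
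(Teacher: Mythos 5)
The paper itself offers no proof of this lemma, flagging it ``(clear),'' so there is nothing to compare against directly; your argument is correct and is surely the intended one. The crux is exactly the observation you make: writing $\mathrm{vol}_M = e^1\wedge\cdots\wedge e^q\wedge\chi_{\mathcal{F}}$ and computing $\ast(e^I\wedge\chi_{\mathcal{F}})$ from the defining pairing, the sign $(-1)^{p(q-k)}$ picked up by sliding $e^{I^c}$ past $\chi_{\mathcal{F}}$ is exactly cancelled by the prefactor in the definition of $\overline{\ast}$, so that $\overline{\ast}e^I=\mathrm{sgn}(I,I^c)\,e^{I^c}$ is literally the Hodge star of $(Q_x,g_Q)$ with its transverse orientation. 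From there $\overline{\ast}^2=(-1)^{k(q-k)}$ is the block-swap sign, and the adjoint statement follows because a fibrewise isometry has $L^2$-adjoint equal to its pointwise inverse.

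One small bookkeeping point you pass over lightly: $(\overline{\ast}_k)^*$ has domain $\Lambda^{q-k}Q^*$, so when you conclude that ``the adjoint of $\overline{\ast}$ equals $(-1)^{k(q-k)}\overline{\ast}$,'' you should note that $(\overline{\ast}_k)^{-1}=(-1)^{k(q-k)}\overline{\ast}_{q-k}$ is an operator on $(q-k)$-forms, and that $k(q-k)=(q-k)\bigl(q-(q-k)\bigr)$, so rewriting with $j=q-k$ yields the cleaner statement that on $j$-forms the adjoint is $(-1)^{j(q-j)}\overline{\ast}$ — which is the form the paper uses. You might also remark that the computation is carried out for a specific ambient orientation $e^1\wedge\cdots\wedge e^q\wedge\chi_{\mathcal{F}}$, but the conclusion that $\overline{\ast}$ equals the transverse star (and hence the two identities) depends only on the transverse orientation, consistent with the paper's remark that $\overline{\ast}$ is independent of the choice of leafwise orientation. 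Neither issue is a gap; both are worth a sentence for completeness.
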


\begin{lemma}
(in \cite{PaRi}) The basic projection $P$ commutes with $\overline{\ast }$.
\end{lemma}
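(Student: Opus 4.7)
The plan is to exploit the self-adjointness of the orthogonal projection $P$ together with the adjoint relation $\overline{\ast}^{\ast}=(-1)^{k(q-k)}\overline{\ast}$ from the preceding lemma. The only auxiliary structural fact needed is that $\overline{\ast}$ sends basic forms to basic forms; once this is in hand, the identity $P\overline{\ast}=\overline{\ast}P$ follows by a short pairing computation.

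First I would verify the auxiliary claim that $\overline{\ast}$ preserves $\Omega(M,\mathcal{F})$. Working in a foliated chart of a bundle-like metric, choose a local orthonormal frame $f_{1},\dots,f_{p}$ of $T\mathcal{F}$ and an orthonormal transversal frame $e_{1},\dots,e_{q}$ of $Q\simeq L^{\perp}$. Locally $\chi_{\mathcal{F}}=f^{1}\wedge\cdots\wedge f^{p}$, and a basic form $\beta$ can be written as a sum of monomials $\beta_{I}\, e^{I}$ with basic coefficients $\beta_{I}$ and purely transversal multi-indices $I$. Then $\beta\wedge\chi_{\mathcal{F}}=\sum_{I}\beta_{I}\,e^{I}\wedge f^{1}\wedge\cdots\wedge f^{p}$ and $\ast(e^{I}\wedge f^{1}\wedge\cdots\wedge f^{p})=\pm e^{I^{c}}$, where $I^{c}$ is the complementary transversal multi-index. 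Thus $\overline{\ast}\beta$ is again a sum of transversal monomials with basic coefficients, hence basic.

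Next, for any $\alpha\in\Omega^{k}(M)$ and any basic $\beta\in\Omega^{q-k}(M,\mathcal{F})$ I would compute
\[
\langle P\overline{\ast}\alpha,\beta\rangle=\langle\overline{\ast}\alpha,\beta\rangle=(-1)^{k(q-k)}\langle\alpha,\overline{\ast}\beta\rangle,
\]
using that $P$ is self-adjoint with $P\beta=\beta$, and the adjoint formula from the preceding lemma. Since $\overline{\ast}\beta$ is basic by the first step, I can slide $P$ into the other slot: $\langle\alpha,\overline{\ast}\beta\rangle=\langle P\alpha,\overline{\ast}\beta\rangle$. Applying the adjoint formula once more (with sign $(-1)^{k(q-k)}$ again, since $P\alpha$ has degree $k$) gives
\[
\langle P\overline{\ast}\alpha,\beta\rangle=(-1)^{2k(q-k)}\langle\overline{\ast}P\alpha,\beta\rangle=\langle\overline{\ast}P\alpha,\beta\rangle.
\]
Thus $P\overline{\ast}\alpha-\overline{\ast}P\alpha$ is $L^{2}$-orthogonal to every basic $(q-k)$-form. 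But both $P\overline{\ast}\alpha$ and $\overline{\ast}P\alpha$ are themselves basic (the first by definition of $P$, the second by the auxiliary claim applied to $P\alpha$), so taking $\beta=P\overline{\ast}\alpha-\overline{\ast}P\alpha$ forces the difference to vanish, proving $P\overline{\ast}=\overline{\ast}P$.

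The only place requiring any real work is the auxiliary claim that $\overline{\ast}$ preserves basicness; this is what carries the geometric content of the bundle-like structure (the local decomposition $\chi_{\mathcal{F}}=f^{1}\wedge\cdots\wedge f^{p}$ and the fact that the Hodge star relative to the orthogonal splitting $TM=L\oplus Q$ sends transversal monomials to transversal monomials). Once that is accepted, the proof reduces to a two-step adjoint shuffle as above.
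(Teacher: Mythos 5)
The paper does not give a proof of this lemma; it simply cites \cite{PaRi}. Your argument, however, is correct and self-contained, so I will evaluate it on its own terms.

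The overall architecture is sound: you establish (a) that $\overline{\ast}$ maps basic forms to basic forms for a bundle-like metric, and (b) a two-sided adjoint computation using the self-adjointness of $P$ and the signed self-adjointness of $\overline{\ast}$ to reduce the commutation $P\overline{\ast}=\overline{\ast}P$ to orthogonality against an arbitrary basic test form; taking the test form to be the difference $P\overline{\ast}\alpha-\overline{\ast}P\alpha$ (which is basic by (a) and by definition of $P$) then forces it to vanish. The sign bookkeeping is consistent: in both applications of $\overline{\ast}^{\ast}=(-1)^{k(q-k)}\overline{\ast}$ the exponent is the same because $k(q-k)=(q-k)k$, so the two factors of $(-1)^{k(q-k)}$ cancel. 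This is a clean, legitimate argument.

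The one place that deserves a slightly more careful statement is the auxiliary claim (a), because it is where all the geometry lives. Your local computation only visibly checks that $\overline{\ast}\beta$ is a \emph{horizontal} form (no leafwise components); to conclude basicness you also need $\mathcal{L}_{X}\overline{\ast}\beta=0$ for $X\in\Gamma(L)$, i.e.\ holonomy-invariance. This is where the hypothesis that $g_{Q}$ is holonomy-invariant and the metric is bundle-like enters essentially: it guarantees that the transversal Hodge star commutes with the flow of leafwise vector fields, so it carries holonomy-invariant horizontal forms to holonomy-invariant horizontal forms. You allude to this (``the geometric content of the bundle-like structure''), but the write-up reads as though producing a transversal monomial expansion with basic coefficients finishes the job, whereas one must also confirm that the coefficient functions of $\overline{\ast}\beta$ depend only on the transverse variables, i.e.\ that the orthonormal transversal coframe and the combinatorial signs can be taken leafwise-constant in the chart. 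Spelling that out would close the only soft spot in an otherwise complete and correct proof.
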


\begin{lemma}
(in \cite{PaRi}) Given any $\alpha \in \left( N\mathcal{F}\right) ^{\ast }$, 
$\alpha \lrcorner =\left( -1\right) ^{q\left( k+1\right) }\overline{\ast }%
\left( \alpha \wedge \right) \overline{\ast }$ as an operator on basic $k$%
-forms.
\end{lemma}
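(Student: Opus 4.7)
The plan is to reduce this identity to the classical pointwise Hodge star formula on an oriented Riemannian manifold. The key observation is that, on basic forms, the transversal star $\overline{\ast}$ plays the role of a Hodge star for the transverse quotient metric; in particular, a direct consequence of the definition $\overline{\ast}\gamma = (-1)^{p(q-k)}\,\ast(\gamma \wedge \chi_{\mathcal{F}})$ is the clean pointwise identity $\ast \beta = (\overline{\ast}\beta)\wedge \chi_{\mathcal{F}}$ for every basic $k$-form $\beta$, since the ordinary Hodge dual of a basic form picks up the full leafwise volume factor. I would begin by verifying this identity in an adapted orthonormal frame.

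With this in hand, I would start from the classical identity on the ambient $n$-manifold,
\[
\alpha \lrcorner \omega \;=\; (-1)^{(n-k)(k+1)}\,\ast(\alpha \wedge \ast \omega),
\]
which holds for any $1$-form $\alpha$ and any $k$-form $\omega$ and follows routinely from the more familiar $\ast(\alpha \wedge \omega)=(-1)^{k}\,\alpha^{\sharp} \lrcorner \ast \omega$. Substituting $\omega = \beta$ a basic $k$-form and using $\ast \beta = (\overline{\ast}\beta)\wedge \chi_{\mathcal{F}}$, the crucial point is that $\alpha \in (N\mathcal{F})^{\ast}$ is transverse, so $\alpha \wedge \overline{\ast}\beta$ is itself a basic $(q-k+1)$-form and
\[
\alpha \wedge \ast \beta \;=\; (\alpha \wedge \overline{\ast}\beta) \wedge \chi_{\mathcal{F}}.
\]
Applying the defining formula for $\overline{\ast}$ to the basic form $\alpha \wedge \overline{\ast}\beta$ then converts the outer $\ast$ back into $\overline{\ast}$, contributing a sign factor of $(-1)^{p(k-1)}$.

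Combining these contributions yields $\alpha \lrcorner \beta = (-1)^{(n-k)(k+1)+p(k-1)}\,\overline{\ast}(\alpha \wedge \overline{\ast}\beta)$, and the final task is to reduce the exponent modulo $2$: writing $n=p+q$ and using $2pk \equiv 0$ together with the fact that $k(k+1)$ is always even, a short parity computation collapses the exponent to $q(k+1)$, as claimed. The main obstacle is the sign bookkeeping --- each step invokes an identity with its own sign convention and these must be combined correctly --- but no new geometric content enters. A conceptually cleaner alternative is to work directly in an adapted orthonormal coframe with $\chi_{\mathcal{F}}=f^{1}\wedge\cdots\wedge f^{p}$ and invoke the classical $q$-dimensional Hodge star formula $\alpha \lrcorner \omega = (-1)^{(q-k)(k+1)}\,\ast_{q}(\alpha\wedge \ast_{q} \omega)$, noting that $(q-k)(k+1) \equiv q(k+1) \pmod{2}$ since $k(k+1)$ is even; this makes manifest that the lemma is just the transverse shadow of a standard identity.
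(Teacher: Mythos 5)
Your proposal is correct, and the key idea --- that on transverse forms $\overline{\ast}$ is literally the Hodge star for the transverse metric, so the identity is the $q$-dimensional specialization of the classical formula $\alpha\lrcorner\omega = (-1)^{(q-k)(k+1)}\ast_q(\alpha\wedge\ast_q\omega)$, with $(q-k)(k+1)\equiv q(k+1)\pmod 2$ because $k(k+1)$ is even --- is exactly the right one. The paper itself does not prove this lemma but only cites Park--Richardson, so there is no in-paper proof to compare against; however, both of your routes (the explicit reduction via $\ast\beta=(\overline{\ast}\beta)\wedge\chi_{\mathcal{F}}$ with the combined exponent $(n-k)(k+1)+p(k-1)\equiv q(k+1)\pmod 2$, and the "cleaner alternative" in an adapted coframe) check out, and the sign bookkeeping is correct. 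One small point of precision: $\alpha\wedge\overline{\ast}\beta$ need not be \emph{basic} (holonomy-invariant) if $\alpha\in(N\mathcal{F})^*$ is merely a pointwise normal covector rather than a basic one-form; what you actually use is only that it is a \emph{transverse} form at each point, which is enough for the pointwise formula defining $\overline{\ast}$ to apply, so the argument stands.
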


\begin{lemma}
(in \cite{PaRi}) If $\beta $ is a basic $k$-form, $\delta _{b}\beta =\left(
-1\right) ^{q\left( k+1\right) +1}\overline{\ast }\left( d-\kappa _{b}\wedge
\right) \overline{\ast }\beta.$ 
\end{lemma}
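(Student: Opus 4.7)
The plan is to deduce this from the formula stated earlier in the excerpt, namely
\[
\delta_b \overline{\ast}\alpha = (-1)^{k+1}\,\overline{\ast}(d-\kappa_b\wedge)\alpha
\]
for every basic $k$-form $\alpha$ on a transversally oriented Riemannian foliation (this is cited from \cite{PaRi}). Given a basic $k$-form $\beta$, I would substitute $\alpha := \overline{\ast}\beta$, which is a basic $(q-k)$-form, and apply that identity with $q-k$ in place of $k$.

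The substitution yields
\[
\delta_b(\overline{\ast}\,\overline{\ast}\beta) = (-1)^{(q-k)+1}\,\overline{\ast}(d-\kappa_b\wedge)\overline{\ast}\beta.
\]
Using the preceding lemma that $\overline{\ast}^{2} = (-1)^{k(q-k)}$ on $k$-forms, the left-hand side simplifies to $(-1)^{k(q-k)}\,\delta_b\beta$. Moving this sign to the other side produces
\[
\delta_b\beta = (-1)^{k(q-k)+(q-k)+1}\,\overline{\ast}(d-\kappa_b\wedge)\overline{\ast}\beta.
\]

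The only remaining task is the modulo-$2$ sign check: verify that $k(q-k)+(q-k)+1 \equiv q(k+1)+1 \pmod{2}$. Expanding, the difference is $k(q-k)+(q-k) - q(k+1) = k^2 - k - 2kq$, and since $k(k-1)$ is always even this difference is even, so the two signs agree. I do not anticipate any obstacle; the bookkeeping of parities is the only thing that can go wrong, and it works out cleanly, so the lemma follows immediately from the earlier identity combined with the involution property of $\overline{\ast}$.
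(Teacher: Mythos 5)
Your proof is mathematically correct, but be aware that it is logically inverted relative to the paper's own development.  The paper does not prove this lemma: it cites it directly from \cite{PaRi} and treats it as a known formula.  The identity you take as your starting point, $\delta_b\overline{\ast}\alpha = (-1)^{k+1}\overline{\ast}(d-\kappa_b\wedge)\alpha$, is item (3) of Proposition \ref{Formulas Proposition}, and the paper \emph{derives that identity from the present lemma} (and the $\overline{\ast}^2$ lemma) via exactly the substitution-and-sign-count manipulation you carry out, only run in the opposite direction.  So within the paper's internal logic, using Prop.\ \ref{Formulas Proposition}(3) to prove this lemma would be circular.  Your argument is saved only by the fact that both formulas are independently attributed to \cite{PaRi} — if one instead wanted a self-contained proof, one would have to go back to $\delta_b = P\delta$, the local-quotient formula $\delta_T = \pm\overline{\ast}d\overline{\ast}$, and the interior-product lemma for $\kappa_b\lrcorner$, which is essentially the route \cite{PaRi} takes.

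One small algebraic slip in the parity check: expanding $k(q-k)+(q-k)-q(k+1)$ actually gives $-k^2-k = -k(k+1)$, not $k^2-k-2kq$.  Your conclusion is unaffected, since both expressions are even, but the intermediate computation as written is wrong.  It is cleaner to observe directly that $k(q-k)+(q-k)+1 - \bigl(q(k+1)+1\bigr) = -k(k+1)$, which is always even.
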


\begin{proposition}
\label{Formulas Proposition}We have the following identities for operators
acting on $\Omega^{k}\left( M,\mathcal{F}\right) $:

\begin{enumerate}
\item $\left( \kappa _{b}\lrcorner \right) \overline{\ast }=\left( -1\right)
^{k}\overline{\ast }\left( \kappa _{b}\wedge \right)$

\item $\overline{\ast }\left( \kappa _{b}\lrcorner \right) =\left( -1\right)
^{k+1}\left( \kappa _{b}\wedge \right) \overline{\ast }$

\item $\delta _{b}\overline{\ast }=\left( -1\right) ^{k+1}\overline{\ast }%
\left( d-\kappa _{b}\wedge \right) $

\item $\overline{\ast }\delta _{b}=\left( -1\right) ^{k}\left( d-\kappa
_{b}\wedge \right) \overline{\ast }$

\item \label{delta-star}$\widetilde{\delta }\overline{\ast }=\left(
-1\right) ^{k+1}\overline{\ast }\widetilde{d}$

\item \label{star-delta}$\overline{\ast }\widetilde{\delta }=\left(
-1\right) ^{k}\widetilde{d}\overline{\ast }$

\item $\overline{\ast }\widetilde{d}=\left( -1\right) ^{k+1}\widetilde{%
\delta }\overline{\ast }$

\item $\widetilde{d}\overline{\ast }=\left( -1\right) ^{k}\overline{\ast }%
\widetilde{\delta }.$
\end{enumerate}
\end{proposition}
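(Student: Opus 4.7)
The strategy is to derive identities (1)--(4) directly from the two previously stated lemmas expressing $\alpha\lrcorner$ and $\delta_b$ as $\overline{\ast}$-conjugates of $\alpha\wedge$ and $d-\kappa_b\wedge$, and then obtain (5)--(8) by substituting the definitions $\widetilde{d}=d-\tfrac{1}{2}\kappa_b\wedge$ and $\widetilde{\delta}=\delta_b-\tfrac{1}{2}\kappa_b\lrcorner$ and collecting terms. All four base identities hinge on keeping careful track of the degree of the form entering each $\overline{\ast}$, and then reducing sign exponents modulo $2$ using the identity $m^2\equiv m\pmod 2$.

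For (1), I would take $\alpha=\kappa_b$ and apply the interior-product lemma to the $(q-k)$-form $\overline{\ast}\omega$, obtaining $\kappa_b\lrcorner\overline{\ast}\omega=(-1)^{q(q-k+1)}\overline{\ast}(\kappa_b\wedge)\overline{\ast}\overline{\ast}\omega$. Using $\overline{\ast}^{2}=(-1)^{k(q-k)}$ on $\omega$, the total exponent is $q(q-k+1)+k(q-k)=q^{2}+q-k^{2}\equiv k\pmod 2$. For (2), I would instead apply $\overline{\ast}$ on the left of the lemma; the intermediate $(q-k+1)$-form $(\kappa_b\wedge)\overline{\ast}\alpha$ contributes $\overline{\ast}^{2}=(-1)^{(q-k+1)(k-1)}$, and combining with the $(-1)^{q(k+1)}$ from the lemma gives the exponent $2qk-k^{2}+2k-1\equiv k+1\pmod 2$. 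Identity (3) is a rewriting of the $\delta_b$-lemma with $\beta=\overline{\ast}\omega$ and a single application of $\overline{\ast}^{2}$; (4) is obtained by applying $\overline{\ast}$ on the left of the $\delta_b$-lemma and again absorbing $\overline{\ast}^{2}$, both with the same type of mod-$2$ collapse.

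With (1)--(4) available, I would prove (5) by expanding
\[
\widetilde{\delta}\,\overline{\ast}
=\delta_{b}\overline{\ast}-\tfrac{1}{2}(\kappa_{b}\lrcorner)\overline{\ast},
\]
substituting (3) and (1). The $\overline{\ast}(\kappa_b\wedge)$ contributions combine to the single coefficient $-\tfrac{1}{2}(-1)^{k+1}$ in front of $\overline{\ast}(\kappa_b\wedge)$, so the right-hand side reassembles as $(-1)^{k+1}\overline{\ast}\widetilde{d}$. Identity (6) is the mirror computation using (4) and (2). Finally, (7) and (8) are pure algebraic rearrangements of (5) and (6): multiply both sides by $(-1)^{k+1}$ and $(-1)^{k}$ respectively, noting that both sides act on $k$-forms and produce forms of the same degree.

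The only real obstacle is the sign bookkeeping; there is no geometric content beyond the already-established lemmas. Everything collapses cleanly once one consistently uses $m^{2}\equiv m\pmod 2$ to reduce expressions such as $q(q-k+1)$ and $(q-k+1)(k-1)$ to linear functions of $k$.
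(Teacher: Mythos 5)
Your proposal is correct and follows essentially the same route as the paper: it uses the two $\overline{\ast}$-conjugation lemmas for $\alpha\lrcorner$ and $\delta_b$ to derive (1)--(4) with careful degree tracking, then combines (1)\&(3) and (2)\&(4) linearly to obtain (5)--(6), and finally rearranges to get (7)--(8). Your sign reductions $q(q-k+1)+k(q-k)\equiv k$ and $q(k+1)+(q-k+1)(k-1)\equiv k+1\pmod{2}$ match the paper's computations exactly.
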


\begin{proof}
Acting on basic $k$-forms, we calculate each of the left sides of the
identities above using the lemmas above:%
\begin{eqnarray*}
\left( \kappa _{b}\lrcorner \right) \overline{\ast } &=&\left( -1\right)
^{q\left( q-k+1\right) }\overline{\ast }\left( \kappa _{b}\wedge \right) 
\overline{\ast }^{2} \\
&=&\left( -1\right) ^{q\left( q-k+1\right) +k\left( q-k\right) }\overline{%
\ast }\left( \kappa _{b}\wedge \right) \\
&=&\left( -1\right) ^{k}\overline{\ast }\left( \kappa _{b}\wedge \right).
\end{eqnarray*}%
\begin{eqnarray*}
\overline{\ast }\left( \kappa _{b}\lrcorner \right) &=&\left( -1\right)
^{q\left( k+1\right) }\overline{\ast }^{2}\left( \kappa _{b}\wedge \right) 
\overline{\ast } \\
&=&\left( -1\right) ^{q\left( k+1\right) +\left( q-k+1\right) \left(
k-1\right) }\left( \kappa _{b}\wedge \right) \overline{\ast } \\
&=&\left( -1\right) ^{k+1}\left( \kappa _{b}\wedge \right) \overline{\ast }.
\end{eqnarray*}%
\begin{eqnarray*}
\delta _{b}\overline{\ast } &=&\left( -1\right) ^{q\left( q-k+1\right) +1}%
\overline{\ast }\left( d-\kappa _{b}\wedge \right) \overline{\ast }^{2} \\
&=&\left( -1\right) ^{q\left( q-k+1\right) +1+k\left( q-k\right) }\overline{%
\ast }\left( d-\kappa _{b}\wedge \right) \\
&=&\left( -1\right) ^{k+1}\overline{\ast }\left( d-\kappa _{b}\wedge \right)
.
\end{eqnarray*}%
\begin{eqnarray*}
\overline{\ast }\delta _{b} &=&\left( -1\right) ^{q\left( k+1\right) +1}%
\overline{\ast }^{2}\left( d-\kappa _{b}\wedge \right) \overline{\ast } \\
&=&\left( -1\right) ^{q\left( k+1\right) +1+\left( q-k+1\right) \left(
k-1\right) }\left( d-\kappa _{b}\wedge \right) \overline{\ast } \\
&=&\left( -1\right) ^{k}\left( d-\kappa _{b}\wedge \right) \overline{\ast }.
\end{eqnarray*}%
Putting the results above together, we have%
\begin{eqnarray*}
\widetilde{\delta }\overline{\ast } &=&\left( \delta _{b}-\frac{1}{2}\kappa
_{b}\lrcorner \right) \overline{\ast } \\
&=&\left( -1\right) ^{k+1}\overline{\ast }\left( d-\kappa _{b}\wedge \right)
-\frac{1}{2}\left( -1\right) ^{k}\overline{\ast }\left( \kappa _{b}\wedge
\right) \\
&=&\left( -1\right) ^{k+1}\overline{\ast }\left( d-\frac{1}{2}\kappa
_{b}\wedge \right) \\
&=&\left( -1\right) ^{k+1}\overline{\ast }\widetilde{d},
\end{eqnarray*}%
and%
\begin{eqnarray*}
\overline{\ast }\widetilde{\delta } &=&\overline{\ast }\left( \delta _{b}-%
\frac{1}{2}\kappa _{b}\lrcorner \right) \\
&=&\left( -1\right) ^{k}\left( d-\kappa _{b}\wedge \right) \overline{\ast }-%
\frac{1}{2}\left( -1\right) ^{k+1}\left( \kappa _{b}\wedge \right) \overline{%
\ast } \\
&=&\left( -1\right) ^{k}\left( d-\frac{1}{2}\kappa _{b}\wedge \right) 
\overline{\ast } \\
&=&\left( -1\right) ^{k}\widetilde{d}\overline{\ast }.
\end{eqnarray*}%
Switching sides of the equations in (\ref{delta-star}) and (\ref{star-delta}%
), we obtain%
\begin{eqnarray*}
\overline{\ast }\widetilde{d} &=&\left( -1\right) ^{k+1}\widetilde{\delta }%
\overline{\ast } \\
\widetilde{d}\overline{\ast } &=&\left( -1\right) ^{k}\overline{\ast }%
\widetilde{\delta }.
\end{eqnarray*}
\end{proof}

\begin{definition}
We define the basic $\widetilde{d}$-cohomology $\widetilde{H}^{\ast }\left(
M,\mathcal{F}\right) $ by 
\begin{equation*}
\widetilde{H}^{k}\left( M,\mathcal{F}\right) =\frac{\ker \widetilde{d}^{k}}{%
\mathrm{image}~\widetilde{d}^{k-1}}.
\end{equation*}
\end{definition}

The following proposition follows from standard arguments and the Hodge
theorem (Theorem \ref{Hodge Theorem}).

\begin{proposition}
The finite-dimensional spaces $\widetilde{H}^{k}\left( M,\mathcal{F}%
\right) $ and $\ker $ $\widetilde{\Delta }^{k}=\ker \left( \widetilde{d}+%
\widetilde{\delta }\right) ^{k}$ are naturally isomorphic.
\end{proposition}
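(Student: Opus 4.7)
The plan is to realize the isomorphism by sending each $\widetilde{\Delta}^k$-harmonic form to its $\widetilde{d}$-cohomology class, using the Hodge decomposition of Proposition~\ref{Hodge Theorem} as the main engine. Define
\[
\Phi : \ker \widetilde{\Delta}^k \longrightarrow \widetilde{H}^k(M, \mathcal{F}), \qquad \Phi(h) = [h].
\]
This map is well defined: by the preceding lemma, $\ker \widetilde{\Delta} = \ker(\widetilde{d} + \widetilde{\delta})$, so every $\widetilde{\Delta}$-harmonic form is in particular $\widetilde{d}$-closed and represents a class in $\widetilde{H}^k(M,\mathcal{F})$. Finite-dimensionality of $\widetilde{H}^k(M,\mathcal{F})$ will then be inherited from the finite-dimensionality of $\ker \widetilde{\Delta}^k$ asserted in Proposition~\ref{Hodge Theorem}.

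For injectivity, suppose $\Phi(h) = 0$, i.e.\ $h = \widetilde{d}\eta$ for some basic $(k-1)$-form $\eta$. The three summands in Proposition~\ref{Hodge Theorem} are $L^2$-orthogonal, so $h$ lies in $\ker \widetilde{\Delta}^k$ and simultaneously in $\mathrm{image}(\widetilde{d}^{k-1})$, forcing $\langle h, h\rangle = 0$ and hence $h=0$. For surjectivity, take a closed basic $k$-form $\omega$ and decompose it via Proposition~\ref{Hodge Theorem} as
\[
\omega = \widetilde{d}\alpha + \widetilde{\delta}\beta + h
\]
with $h \in \ker \widetilde{\Delta}^k$. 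Applying $\widetilde{d}$ and using $\widetilde{d}^2 = 0$ together with $\widetilde{d}h = 0$ gives $\widetilde{d}\widetilde{\delta}\beta = 0$; pairing with $\beta$ yields $\|\widetilde{\delta}\beta\|^2 = \langle \widetilde{d}\widetilde{\delta}\beta,\beta\rangle = 0$, so $\widetilde{\delta}\beta = 0$. Thus $\omega - h = \widetilde{d}\alpha$ is $\widetilde{d}$-exact and $[\omega] = [h] = \Phi(h)$.

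There is essentially no hard step here: all of the heavy lifting is absorbed into the Hodge decomposition (Proposition~\ref{Hodge Theorem}) and into the identification $\ker \widetilde{\Delta} = \ker(\widetilde{d} + \widetilde{\delta})$ already established. The one point that has to be handled carefully is the elimination of the $\widetilde{\delta}$-exact piece in the surjectivity argument, but this is the standard $L^2$-pairing trick and requires only that $\widetilde{\delta}$ be the formal adjoint of $\widetilde{d}$ on basic forms, which is the content of the first lemma of this section.
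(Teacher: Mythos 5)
Your proof is correct and is exactly the "standard argument" the paper has in mind: the paper omits the proof entirely, stating only that it follows from the Hodge decomposition (Proposition~\ref{Hodge Theorem}) and the identification $\ker\widetilde{\Delta}=\ker(\widetilde{d}+\widetilde{\delta})$, which is precisely what you spell out. Your injectivity step via $L^2$-orthogonality and surjectivity step via killing the $\widetilde{\delta}$-exact component using the adjointness $\widetilde{\delta}=\widetilde{d}^{\ast}$ are the canonical route and contain no gaps.
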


We observe that for every choice of bundle-like metric, the differential $%
\widetilde{d}$ changes, and thus the cohomology groups change. However, note
that $\kappa _{b}$ is the only part that changes; for any two bundle-like
metrics $g_{M}$, $g_{M}^{\prime }$ and associated $\kappa _{b}$, $\kappa
_{b}^{\prime }$ compatible with $\left( M,\mathcal{F},g_{Q}\right) $, we
have $\kappa _{b}^{\prime }=\kappa _{b}+dh$ for some basic function $h$ (see 
\cite{Al}). In the proof of the main theorem in \cite{HabRi}, we essentially
showed that the the basic de Rham operator $D_{b}$ is then transformed by $%
D_{b}^{\prime }=e^{h/2}D_{b}e^{-h/2}$. Applying this to our situation, we
see that the $\left( \ker D_{b}^{\prime }\right) =e^{h/2}\ker D_{b}$, and
thus the cohomology groups are the same dimensions, independent of choices.
To see this in our specific situation, note that if $\alpha \in \Omega
^{k}\left( M,\mathcal{F}\right) $ satisfies $\widetilde{d}\alpha =0$, then%
\begin{eqnarray*}
\left( \widetilde{d}\right) ^{\prime }\left( e^{h/2}\alpha \right) &=&\left(
d-\frac{1}{2}\kappa _{b}\wedge -\frac{1}{2}dh\wedge \right) \left(
e^{h/2}\alpha \right) \\
&=&e^{h/2}d\alpha +\frac{1}{2}e^{h/2}dh\wedge \alpha -\frac{e^{h/2}}{2}%
\kappa _{b}\wedge \alpha -\frac{e^{h/2}}{2}dh\wedge \alpha \\
&=&e^{h/2}d\alpha -\frac{e^{h/2}}{2}\kappa _{b}\wedge \alpha =e^{h/2}\left(
d-\frac{1}{2}\kappa _{b}\wedge \right) \alpha =e^{h/2}\widetilde{d}\alpha =0.
\end{eqnarray*}%
Similarly, as in \cite{HabRi} one may show $\ker \left( \widetilde{\delta }%
\right) ^{\prime }=e^{h/2}\ker \left( \widetilde{\delta }\right) $, through
a slightly more difficult computation. Thus, we have

\begin{theorem}
(Conformal invariance of cohomology groups) Given a Riemannian foliation $%
\left( M,\mathcal{F},g_{Q}\right) $ and any two bundle-like metrics $g_{M}$
and $g_{M}^{\prime }$ compatible with $g_{Q}$, the $\widetilde{d}$%
-cohomology groups $\widetilde{H}^{k}\left( M,\mathcal{F}\right) $ are
isomorphic, and that isomorphism is implemented by multiplication by a
positive basic function. Further, the eigenvalues of the corresponding basic
de Rham operators $D_{b}$ and $D_{b}^{\prime }$ are identical, and the
eigenspaces are isomorphic via multiplication by that same positive function.
\end{theorem}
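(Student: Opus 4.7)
\bigskip

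\noindent\textbf{Proof proposal.} The plan is to construct an explicit chain isomorphism between the two complexes $\bigl(\Omega^{\ast}(M,\mathcal{F}), \widetilde{d}\bigr)$ and $\bigl(\Omega^{\ast}(M,\mathcal{F}), \widetilde{d}\,'\bigr)$ by multiplication with $e^{h/2}$, where $h$ is the basic function arising from Álvarez-López's comparison of the two mean curvature projections. Once the chain-map property is verified, the cohomology isomorphism is immediate; the corresponding statement for eigenvalues and eigenspaces of $D_b$ will follow by combining the analogous intertwining for $\widetilde{\delta}$ with the argument of \cite{HabRi}.

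First I would recall from \cite{Al} that, since both $g_M$ and $g_M'$ are bundle-like and compatible with the same $(M,\mathcal{F},g_Q)$, the associated basic mean curvature one-forms satisfy $\kappa_b' = \kappa_b + dh$ for a (smooth) basic function $h$. Define $\Phi^k : \Omega^k(M,\mathcal{F}) \to \Omega^k(M,\mathcal{F})$ by $\Phi^k(\alpha) = e^{h/2}\alpha$; this is clearly a bijective linear map with inverse $(\Phi^k)^{-1}(\beta) = e^{-h/2}\beta$, and since $h$ is basic it preserves basicness. The calculation already displayed in the excerpt shows
\begin{equation*}
\widetilde{d}\,'\bigl(e^{h/2}\alpha\bigr) = e^{h/2}\,\widetilde{d}\alpha,
\end{equation*}
so $\Phi$ is a chain isomorphism from $(\Omega^{\ast}, \widetilde{d})$ to $(\Omega^{\ast}, \widetilde{d}\,')$. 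It therefore induces a degree-preserving isomorphism on $\widetilde{H}^{\ast}(M,\mathcal{F})$ implemented by multiplication by the positive basic function $e^{h/2}$, which is the first assertion.

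For the eigenvalue claim I would argue that $\Phi$ also conjugates $\widetilde{\delta}$ to $\widetilde{\delta}\,'$, so that $D_b' = \Phi \circ D_b \circ \Phi^{-1} = e^{h/2} D_b\, e^{-h/2}$. This requires checking $\widetilde{\delta}\,'(e^{h/2}\alpha) = e^{h/2}\widetilde{\delta}\alpha$, which is the content of the "slightly more difficult" adjoint computation in \cite{HabRi}: the change of bundle-like metric alters the $L^2$ pairing on basic forms by the factor $e^{h}$ (the leafwise volume relative to $g_Q$ changes by this factor), and one verifies directly that with this rescaling $\Phi$ becomes an intertwiner of the two formal adjoints. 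Granting the intertwining for $\widetilde{\delta}$, the equality $D_b' = e^{h/2} D_b\, e^{-h/2}$ gives the spectrum invariance and identifies the $\lambda$-eigenspace of $D_b'$ with $e^{h/2}$ times the $\lambda$-eigenspace of $D_b$, completing the proof.

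The main obstacle is precisely the intertwining for $\widetilde{\delta}$: the formal adjoint depends not only on $\kappa_b$ but on the full leafwise geometry (through both $\delta_b = P\delta$ and the pointwise $\kappa_b \lrcorner$), and the two pieces must be shown to recombine into the conjugation by $e^{h/2}$. Since this computation was carried out in \cite{HabRi} in the course of proving Theorem \ref{inv}, I would quote that argument rather than redo it; the contribution here is to repackage it as the statement that the entire $\widetilde{d}$-complex, and not just the operator $D_b$, is conformally intertwined.
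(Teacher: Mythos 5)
Your proposal follows essentially the same route as the paper: both cite \cite{Al} to write $\kappa_b' = \kappa_b + dh$ for a basic function $h$, verify by the same computation that multiplication by $e^{h/2}$ intertwines $\widetilde{d}$ with $\widetilde{d}\,'$, and invoke \cite{HabRi} for the harder $\widetilde{\delta}$-intertwining, i.e.\ the conjugation $D_b' = e^{h/2} D_b e^{-h/2}$, from which both the cohomology and the spectral conclusions follow. One minor caveat: your parenthetical heuristic that the $L^2$ pairing on basic forms rescales by exactly $e^{h}$ because the leafwise volume changes by that factor is not literally correct for two arbitrary compatible bundle-like metrics (only the basic projection of the mean curvature is controlled, not the full leafwise volume), but since you explicitly defer the $\widetilde{\delta}$ computation to \cite{HabRi} rather than relying on that heuristic, the argument as a whole is sound.
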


\begin{corollary}
The dimensions of $\widetilde{H}^{k}\left( M,\mathcal{F}\right) $ and the
eigenvalues of $D_{b}$ (and thus of $\widetilde{\Delta }=D_{b}^{2}$) are
invariants of the Riemannian foliation structure $\left( M,\mathcal{F}%
,g_{Q}\right) $, independent of choice of compatible bundle-like metric $%
g_{M}$.
\end{corollary}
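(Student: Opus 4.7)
The plan is to observe that the corollary is an essentially immediate restatement of the preceding Theorem on conformal invariance, once one translates ``isomorphism'' into ``equality of dimensions'' and ``eigenvalues of $D_b$'' into ``eigenvalues of $D_b^2$.''

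First I would fix a Riemannian foliation $(M,\mathcal{F},g_Q)$ and let $g_M, g_M'$ be any two bundle-like metrics inducing the given transverse metric $g_Q$. By the Conformal Invariance Theorem above, there is a linear isomorphism $\widetilde H^k(M,\mathcal{F})_{g_M}\to \widetilde H^k(M,\mathcal{F})_{g_M'}$ given by multiplication by a positive basic function $e^{h/2}$. Since both spaces are finite-dimensional (by the Hodge decomposition, Proposition \ref{Hodge Theorem}), the existence of this isomorphism immediately forces $\dim \widetilde H^k(M,\mathcal{F})_{g_M}=\dim \widetilde H^k(M,\mathcal{F})_{g_M'}$, proving that the Betti numbers depend only on $(M,\mathcal{F},g_Q)$.

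Next I would treat the spectral statement. The same theorem provides that the eigenvalues of the basic de Rham operators $D_b$ and $D_b'$ associated with $g_M$ and $g_M'$ coincide (with eigenspaces intertwined by multiplication by $e^{h/2}$). Since $\widetilde{\Delta}=D_b^2$, every eigenvalue $\mu$ of $\widetilde{\Delta}$ is of the form $\mu=\lambda^2$ for some eigenvalue $\lambda$ of $D_b$, with the same eigenspace; hence invariance of the spectrum of $D_b$ implies invariance of the spectrum of $\widetilde{\Delta}$.

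There is no real obstacle here: the content of the corollary has been fully established in the preceding theorem, and the corollary merely packages those two conclusions (dimensions of cohomology, and eigenvalues of $D_b$ and $D_b^2$) as invariants of the triple $(M,\mathcal{F},g_Q)$. The only mild point to flag is that the finite-dimensionality needed to pass from ``isomorphism'' to ``same dimension'' comes from the Hodge theorem proved earlier, and that the squaring map $\lambda\mapsto\lambda^2$ is what links the spectra of $D_b$ and $\widetilde{\Delta}$.
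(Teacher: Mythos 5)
Your proposal is correct and is essentially the intended argument: the paper gives no separate proof of this corollary precisely because it is an immediate repackaging of the preceding Conformal Invariance theorem, using finite-dimensionality from the Hodge decomposition to pass from isomorphism to equality of dimensions, and the squaring relation to pass from the spectrum of $D_b$ to that of $\widetilde{\Delta}$.
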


\begin{corollary}
\label{indepTwBasicCohomCor}The dimensions of $\widetilde{H}^{k}\left( M,%
\mathcal{F}\right) $ are independent of the choice of the bundle-like metric
and independent of the transverse Riemannian foliation structure.
\end{corollary}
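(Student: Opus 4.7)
The plan is to bootstrap from the immediately preceding corollary (which handles variation of $g_{M}$ for fixed $g_{Q}$) to the case of varying $g_{Q}$ as well. Let $g_{Q}$ and $g_{Q}^{\prime }$ be two holonomy-invariant transverse metrics on the normal bundle $Q$, and let $g_{M}, g_{M}^{\prime }$ be associated bundle-like metrics with basic mean curvature one-forms $\kappa _{b}, \kappa _{b}^{\prime }$. The strategy is to show $\kappa _{b}^{\prime } - \kappa _{b} = dh$ for some basic function $h$, after which the same conformal isomorphism $\alpha \mapsto e^{h/2}\alpha $ used in the proof of the preceding conformal invariance theorem intertwines $\widetilde{d}$ with $\widetilde{d}^{\prime }$ (and $\widetilde{\delta }$ with $\widetilde{\delta }^{\prime }$), inducing a dimension-preserving isomorphism $\widetilde{H}^{k}(M,\mathcal{F}) \cong \widetilde{H}^{k}(M,\mathcal{F})^{\prime }$ in each degree.

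To show $[\kappa _{b}] = [\kappa _{b}^{\prime }]$ in $H_{d}^{1}(M,\mathcal{F})$, I first observe that the space of holonomy-invariant transverse metrics on $Q$ is convex: a convex combination of holonomy-invariant positive-definite symmetric $2$-tensors is again such. Hence the linear path $g_{Q}(t) := (1-t)g_{Q} + t g_{Q}^{\prime }$ for $t \in [0,1]$ is a smooth one-parameter family of Riemannian foliation structures on the fixed smooth foliation $(M,\mathcal{F})$. Since the space of basic forms and the ordinary basic cohomology $H_{d}^{\ast }(M,\mathcal{F})$ are intrinsic to the smooth foliation and do not depend on $g_{Q}$, the Álvarez class $[\kappa _{b}(t)]$ lives in a single ambient vector space as $t$ varies. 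Nozawa's stability theorem (\cite{Noz1}, \cite{Noz2}) then ensures $[\kappa _{b}(t)]$ is constant along this path; evaluating at $t=0,1$ gives the required equality, after which one writes $\kappa _{b}^{\prime } = \kappa _{b} + dh$ for a basic function $h$.

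The principal obstacle is precisely justifying invariance of the Álvarez class under change of the transverse metric $g_{Q}$; the preceding corollary only treats variations of $g_{M}$ with $g_{Q}$ fixed. Invoking Nozawa's stability as above is the cleanest path. A more self-contained alternative would be to differentiate directly: compute $\tfrac{d}{dt}\kappa _{b}(t)$ using the variation formulas for the mean curvature and the basic projection along the path $g_{Q}(t)$, and exhibit the derivative as $d$ of a basic function. This avoids the appeal to \cite{Noz1}, \cite{Noz2} at the cost of a non-trivial tensor calculation. With the Álvarez class invariance in hand, the remainder of the argument repeats the conformal transformation verbatim from the preceding theorem, and the final statement follows.
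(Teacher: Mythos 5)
Your overall strategy — reduce to showing $\kappa_b'-\kappa_b = dh$ for a basic $h$, then apply the conformal conjugation $e^{h/2}\widetilde d\,e^{-h/2}$ — does match the paper, and your observation that the space of holonomy-invariant transverse metrics is convex is correct. But the detour through Nozawa's theorem is both unnecessary and, as stated, incorrect. The theorem of \'Alvarez-L\'opez in \cite{Al} already asserts that $[\kappa_b]\in H^1_d(M,\mathcal F)$ is independent of the choice of bundle-like metric \emph{in full generality}: the statement there is not restricted to bundle-like metrics inducing a fixed $g_Q$. Any two bundle-like metrics on a fixed smooth foliation $\mathcal F$ (whatever transverse metrics they induce) have cohomologous basic mean curvature forms, so one immediately has $\kappa_b' = \kappa_b + dh$ with $h$ basic, and the conjugation finishes the proof. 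This is exactly what the paper does, in one line. Your premise that ``the preceding corollary only treats variations of $g_M$ with $g_Q$ fixed'' leads you to believe additional input is needed, but the cited result of \cite{Al} already supplies it.

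Worse, the appeal to Nozawa does not actually close the gap you perceive. Nozawa's results \cite{Noz1}, \cite{Noz2} concern deformations of the Riemannian foliation $\mathcal F_t$ itself (not merely the bundle-like metric on a fixed $\mathcal F$), and the general result is \emph{continuity} of the \'Alvarez class along such deformations, not constancy; constancy (``rigidity'') is proved only under additional hypotheses such as $\pi_1(M)$ polycyclic or of polynomial growth. Your sentence ``Nozawa's stability theorem then ensures $[\kappa_b(t)]$ is constant along this path'' therefore overstates what that theorem gives. Had \'Alvarez-L\'opez's theorem not already covered the general change of bundle-like metric, this step would be a genuine gap. As it stands, the correct repair is simply to quote \cite{Al} for $[\kappa_b']=[\kappa_b]$ for arbitrary pairs of bundle-like metrics, drop the path and the Nozawa citation, and proceed directly to the conformal conjugation.
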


\begin{proof}
By \cite{Al}, the basic components of the mean curvature forms for two
different bundle-like metrics differ by an exact basic one-form $\kappa
_{b}^{\prime }=\kappa _{b}+dh$. Since 
\begin{equation*}
\left(\widetilde{d}\right)^{\prime }=e^{h/2}\widetilde{d}e^{-h/2}
\end{equation*}%
by the computation above, the twisted basic cohomology groups corresponding
to the different metrics are conjugate.
\end{proof}

\section{Poincar\'{e} duality and consequences}

\begin{theorem}
\label{PoincaredualityTheorem}(Poincar\'{e} duality for $\widetilde{d}$%
-cohomology) Suppose that the Riemannian foliation $\left( M,\mathcal{F}%
,g_{Q}\right) $ is transversally oriented and is endowed with a bundle-like
metric. For each $k$ such that $0\leq k\leq q$ and any compatible choice of
bundle-like metric, the map $\overline{\ast }:\Omega ^{k}\left( M,\mathcal{F}%
\right) \rightarrow \Omega ^{q-k}\left( M,\mathcal{F}\right) $ induces an
isomorphism on the $\widetilde{d}$-cohomology. Moreover, $\overline{\ast }$
maps the $\ker \widetilde{\Delta }^{k}$ isomorphically onto $\ker \widetilde{%
\Delta }^{q-k}$, and it maps the $\lambda $-eigenspace of $\widetilde{\Delta 
}^{k}$ isomorphically onto the $\lambda $-eigenspace of $\widetilde{\Delta }%
^{q-k}$, for all $\lambda \geq 0$.
\end{theorem}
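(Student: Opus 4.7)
The plan is to reduce Poincar\'{e} duality to the statement that $\overline{\ast}$ intertwines the twisted Laplacians $\widetilde{\Delta}^k$ and $\widetilde{\Delta}^{q-k}$, and then invoke the Hodge decomposition of Proposition \ref{Hodge Theorem}. The key algebraic input is already packaged in identities (\ref{delta-star})--(8) of Proposition \ref{Formulas Proposition}, so the proof is really a bookkeeping exercise with signs.

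First I would establish the intertwining relation $\overline{\ast}\,\widetilde{\Delta}^{k} = \widetilde{\Delta}^{q-k}\,\overline{\ast}$ on basic $k$-forms. Starting from $\widetilde{\Delta}^k = \widetilde{d}\widetilde{\delta} + \widetilde{\delta}\widetilde{d}$, I would expand $\overline{\ast}\,\widetilde{d}\widetilde{\delta}\alpha$ by first applying identity (7) of Proposition \ref{Formulas Proposition} to the $(k{-}1)$-form $\widetilde{\delta}\alpha$, then applying identity (6) to the $k$-form $\alpha$; the two sign factors $(-1)^{k}\cdot(-1)^{k}$ cancel, yielding $\widetilde{\delta}\widetilde{d}\,\overline{\ast}\alpha$. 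Symmetrically, applying identity (8) to the $(k{+}1)$-form $\widetilde{d}\alpha$ and then identity (7) to $\alpha$ produces $\widetilde{d}\widetilde{\delta}\,\overline{\ast}\alpha$. Adding gives the desired commutation.

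Next, since $\overline{\ast}^{2}=(-1)^{k(q-k)}$ is invertible, $\overline{\ast}:\Omega^{k}(M,\mathcal{F})\to\Omega^{q-k}(M,\mathcal{F})$ is a pointwise $\mathbb{R}$-linear isomorphism, and the intertwining property immediately implies that $\overline{\ast}$ carries the $\lambda$-eigenspace of $\widetilde{\Delta}^{k}$ isomorphically onto the $\lambda$-eigenspace of $\widetilde{\Delta}^{q-k}$ for every $\lambda\geq 0$. In particular, taking $\lambda=0$, it restricts to an isomorphism $\ker\widetilde{\Delta}^{k}\cong\ker\widetilde{\Delta}^{q-k}$.

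Finally, to transfer this to $\widetilde{d}$-cohomology, I would invoke the Hodge decomposition from Proposition \ref{Hodge Theorem}, which gives a natural isomorphism $\widetilde{H}^{k}(M,\mathcal{F})\cong\ker\widetilde{\Delta}^{k}$; composing with $\overline{\ast}$ produces the claimed isomorphism $\widetilde{H}^{k}(M,\mathcal{F})\cong\widetilde{H}^{q-k}(M,\mathcal{F})$. I expect no substantive obstacle: the only thing to watch is the sign accounting in the two cross-cancellations, and the degrees at which each identity of Proposition \ref{Formulas Proposition} is applied (namely $k$, $k\pm 1$). Everything else is formal.
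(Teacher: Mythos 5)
Your proposal matches the paper's proof essentially exactly: establish the commutation $\overline{\ast}\widetilde{\Delta}=\widetilde{\Delta}\overline{\ast}$ by splitting $\widetilde{\Delta}=\widetilde{d}\widetilde{\delta}+\widetilde{\delta}\widetilde{d}$ and applying identities (6) and (7) of Proposition~\ref{Formulas Proposition} at degrees $k$ and $k\pm 1$ so that the signs cancel, then pass to eigenspaces and invoke the Hodge decomposition. The sign bookkeeping and degree tracking you describe are precisely what the paper does; no gap.
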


\begin{proof}
Acting on basic forms of degree $k$, we use 
\begin{eqnarray*}
\overline{\ast }\widetilde{\Delta } &=&\overline{\ast }\widetilde{d}%
\widetilde{\delta }+\overline{\ast }\widetilde{\delta }\widetilde{d} \\
&=&\left( -1\right) ^{k}\widetilde{\delta }\overline{\ast }\widetilde{\delta 
}+\left( -1\right) ^{k+1}\widetilde{d}\overline{\ast }\widetilde{d} \\
&=&\left( -1\right) ^{k}\widetilde{\delta }\left( -1\right) ^{k}\widetilde{d}%
\overline{\ast }+\left( -1\right) ^{k+1}\widetilde{d}\left( -1\right) ^{k+1}%
\widetilde{\delta }\overline{\ast } \\
&=&\left( \widetilde{\delta }\widetilde{d}+\widetilde{d}\widetilde{\delta }%
\right) \overline{\ast }=\widetilde{\Delta }\overline{\ast}.
\end{eqnarray*}%
Since $\overline{\ast }$ commutes with $\widetilde{\Delta }$, it maps
eigenspaces of $\widetilde{\Delta }$ to themselves. By the Hodge theorem,
the result follows.
\end{proof}

This resolves the problem of the failure of Poincar\'{e} duality to hold for
standard basic cohomology (see \cite{KTduality}, \cite{To}).

\begin{corollary}
Let $\left( M,\mathcal{F}\right) $ be a smooth transversally oriented
foliation of odd codimension that admits a transverse Riemannian structure.
Then the Euler characteristic associated to the $\widetilde{H}^{\ast }\left(
M,\mathcal{F}\right)$ vanishes.
\end{corollary}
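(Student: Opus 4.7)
The plan is to deduce this directly from the twisted Poincaré duality of Theorem \ref{PoincaredualityTheorem}. Write the Euler characteristic as
\begin{equation*}
\chi(\widetilde{H}^{\ast}(M,\mathcal{F})) = \sum_{k=0}^{q} (-1)^{k} \dim \widetilde{H}^{k}(M,\mathcal{F}),
\end{equation*}
which is a finite sum because Proposition \ref{Hodge Theorem} guarantees that each $\widetilde{H}^{k}(M,\mathcal{F})$ is finite-dimensional. Since the foliation is transversally oriented, the transversal Hodge star $\overline{\ast}$ is well-defined, and Theorem \ref{PoincaredualityTheorem} yields $\dim \widetilde{H}^{k}(M,\mathcal{F}) = \dim \widetilde{H}^{q-k}(M,\mathcal{F})$ for all $0 \leq k \leq q$.

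Now I would pair the $k$-th term with the $(q-k)$-th term. Their combined contribution is
\begin{equation*}
\bigl((-1)^{k} + (-1)^{q-k}\bigr)\dim \widetilde{H}^{k}(M,\mathcal{F}) = (-1)^{k}\bigl(1 + (-1)^{q}\bigr)\dim \widetilde{H}^{k}(M,\mathcal{F}).
\end{equation*}
Because $q$ is odd, $1 + (-1)^{q} = 0$, so each pair cancels. Since the pairing $k \leftrightarrow q-k$ has no fixed point when $q$ is odd (as $k = q-k$ would force $q = 2k$, contradicting odd $q$), every term is accounted for, and the total Euler characteristic vanishes.

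There is essentially no obstacle here; the only subtlety to flag is that the bundle-like metric required to define $\widetilde{d}$ and $\overline{\ast}$ is available by hypothesis (the foliation admits a transverse Riemannian structure, hence a compatible bundle-like metric), and by Corollary \ref{indepTwBasicCohomCor} the dimensions $\dim \widetilde{H}^{k}(M,\mathcal{F})$ do not depend on this choice, so the conclusion is intrinsic to $(M,\mathcal{F})$.
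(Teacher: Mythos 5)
Your proof is correct and is exactly the standard Poincar\'{e}-duality pairing argument that the paper leaves implicit by stating this corollary immediately after Theorem \ref{PoincaredualityTheorem} without a written proof. The remark about metric-independence via Corollary \ref{indepTwBasicCohomCor} is a nice touch but not strictly needed, since the duality already holds for each admissible metric.
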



\begin{corollary}
\label{EulerZeroInOddCodimCorollary}Let $\left( M,\mathcal{F}\right) $ be a
smooth transversally oriented foliation of odd codimension that admits a
transverse Riemannian structure. Then the Euler characteristic associated to
the ordinary basic cohomology $H_d^{\ast }\left( M,\mathcal{F}\right) $
vanishes.
\end{corollary}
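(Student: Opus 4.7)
The plan is to reduce to the preceding corollary by showing that the Euler characteristic of the ordinary basic cohomology coincides with that of the twisted basic cohomology $\widetilde{H}^{\ast}(M,\mathcal{F})$.

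First, I would realize both Euler characteristics as Fredholm indices of transversally elliptic operators on basic forms. For the ordinary basic cohomology, note that on basic forms one has $\delta_{b}^{2}=0$: for any basic $\alpha,\beta$,
\[
\langle \delta_{b}^{2}\alpha,\beta\rangle = \langle \delta_{b}\alpha,d\beta\rangle = \langle \alpha,d^{2}\beta\rangle = 0,
\]
and $\delta_{b}^{2}\alpha$ is itself basic, hence vanishes. Therefore $(d+\delta_{b})^{2}=\Delta_{b}$ on $\Omega^{\ast}(M,\mathcal{F})$, so $d+\delta_{b}$ is essentially self-adjoint on basic forms with kernel equal to the basic harmonic forms. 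Grading by parity and invoking the basic Hodge theorem yields
\[
\mathrm{ind}\bigl(d+\delta_{b}\colon\Omega^{\mathrm{even}}(M,\mathcal{F})\to\Omega^{\mathrm{odd}}(M,\mathcal{F})\bigr)=\sum_{k}(-1)^{k}\dim H_{d}^{k}(M,\mathcal{F})=:\chi(M,\mathcal{F}).
\]
The same argument, now using Proposition~\ref{Hodge Theorem} in place of the ordinary basic Hodge theorem, shows that the analogous index of $D_{b}=\widetilde{d}+\widetilde{\delta}$ equals $\sum_{k}(-1)^{k}\dim\widetilde{H}^{k}(M,\mathcal{F})$.

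Next, I would compare the two operators. Directly from the definitions,
\[
D_{b}-(d+\delta_{b})=-\tfrac{1}{2}(\kappa_{b}\wedge + \kappa_{b}\lrcorner)
\]
is a zeroth-order bundle endomorphism, so $d+\delta_{b}$ and $D_{b}$ share their transversal principal symbol. By stability of the index under such perturbations, the two indices coincide, whence
\[
\chi(M,\mathcal{F})=\sum_{k}(-1)^{k}\dim\widetilde{H}^{k}(M,\mathcal{F}).
\]
Combining this with the preceding corollary, which gives vanishing of the right-hand side in odd codimension, finishes the proof.

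The main obstacle is the invariance-of-index step in the transversally elliptic setting, where one must work with a Sobolev completion of the space of basic forms. A conceptually cleaner alternative would be to deform through the family $d_{t}:=d-\tfrac{t}{2}\kappa_{b}\wedge$ for $t\in[0,1]$: each $d_{t}$ is a differential (since $\kappa_{b}$ is closed), the associated Laplacians $\Delta_{t}=d_{t}\delta_{t}+\delta_{t}d_{t}$ (with $\delta_{t}$ the $L^{2}$-adjoint on basic forms) are transversally elliptic with a common principal symbol, and a Hodge-decomposition argument shows that the alternating sum $\sum_{k}(-1)^{k}\dim\ker\Delta_{t}^{k}$ is constant in $t$, interpolating between the two Euler characteristics at $t=0$ and $t=1$.
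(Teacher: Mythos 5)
Your proposal is correct and takes essentially the same approach as the paper: both realize the two Euler characteristics as Fredholm indices of transversally elliptic operators on basic forms ($d+\delta_b$ and $D_b=\widetilde{d}+\widetilde{\delta}$), observe that these differ by a zeroth-order bundle map (the paper writes this as the explicit homotopy $D_t=d+\delta_b-\tfrac{t}{2}\kappa_b\lrcorner-\tfrac{t}{2}\kappa_b\wedge$), invoke stability of the basic index, and then apply the twisted Poincar\'e duality corollary. Your preliminary check that $\delta_b^2=0$ (so that $d+\delta_b$ indeed computes $H_d^\ast(M,\mathcal{F})$ via Hodge theory) is a useful detail the paper leaves to the cited references, but the overall argument is the same.
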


\begin{proof}
The basic Euler characteristic is the basic index of the operator $%
D_{0}=d+\delta _{B}:\Omega^{\mathrm{even}}\left( M,\mathcal{F}\right)
\rightarrow \Omega^{\mathrm{odd}}\left( M,\mathcal{F}\right) $. See \cite%
{BKR}, \cite{DGKY}, \cite{BePaRi}, \cite{EK} for information on the basic
index and basic Euler characteristic. The crucial property for us is that
the basic index of $D_{0}$ is a Fredholm index and is invariant under
perturbations of the operator through transversally elliptic operators that
map the basic forms to themselves. In particular, the family of operators $%
D_{t}=d+\delta _{b}-\frac{t}{2}\kappa _{b}\lrcorner -\frac{t}{2}\kappa
_{b}\wedge $ for $0\leq t\leq 1$ meets that criteria, and $D_{1}=D_{b}$ is
the basic de Rham operator $D_{b}:\Omega^{\mathrm{even}}\left( M,\mathcal{F}%
\right) \rightarrow \Omega^{\mathrm{odd}}\left( M,\mathcal{F}\right) $.
Thus, the basic Euler characteristic of the basic cohomology complex is the
same as the basic Euler characteristic of the $\widetilde{d}$-cohomology
complex. The result follows from the previous corollary.
\end{proof}

Using this result, we give another proof for the well-known theorem \cite%
{Sul}:

\begin{theorem}
Let $\mathcal{F}$ be a transversally Riemannian oriented foliation of
codimension $1$ on a closed manifold $M$. Then $\mathcal{F}$ is taut.
\end{theorem}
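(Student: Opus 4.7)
The plan is to combine Corollary~\ref{EulerZeroInOddCodimCorollary} with Masa's tautness criterion (recalled in the introduction), using the fact that in codimension one the basic Euler characteristic reduces to just two terms.

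First I would note that, since $M$ is a closed connected manifold, any smooth basic function $f$ satisfying $df=0$ is globally constant; hence
\[
H_d^{0}(M,\mathcal{F})\cong\mathbb{R}.
\]
In codimension $q=1$ the only potentially nonzero basic cohomology groups are $H_d^{0}$ and $H_d^{1}$, so the basic Euler characteristic reduces to
\[
\chi(M,\mathcal{F})=\dim H_d^{0}(M,\mathcal{F})-\dim H_d^{1}(M,\mathcal{F}).
\]

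Second, I would apply Corollary~\ref{EulerZeroInOddCodimCorollary}: since $q=1$ is odd, $\chi(M,\mathcal{F})=0$. Combined with the identity above, this forces $\dim H_d^{1}(M,\mathcal{F})=1$, and in particular $H_d^{1}(M,\mathcal{F})\neq\{0\}$.

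Finally, I would invoke Masa's theorem, cited in the introduction: a transversally oriented Riemannian foliation of codimension $q$ is taut if and only if $H_d^{q}(M,\mathcal{F})\neq\{0\}$. Taking $q=1$, the nonvanishing just established yields tautness of $\mathcal{F}$.

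There is no serious obstacle here; the only mildly nontrivial point is the identification $H_d^{0}(M,\mathcal{F})\cong\mathbb{R}$, which is standard once $M$ is connected (and handled component-wise otherwise). The substantive input is the Poincar\'e-duality-driven vanishing of the basic Euler characteristic in odd codimension, already established in Corollary~\ref{EulerZeroInOddCodimCorollary}.
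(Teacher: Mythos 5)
Your proposal is correct and follows exactly the same route as the paper's proof: invoke Corollary~\ref{EulerZeroInOddCodimCorollary} to get $\chi(M,\mathcal{F})=0$, observe that in codimension one this forces $\dim H_d^1(M,\mathcal{F})=1$, and conclude tautness via Masa's criterion. You merely spell out the (standard, correct) steps that $H_d^0\cong\mathbb{R}$ on a connected closed manifold and that basic cohomology vanishes above the codimension, which the paper leaves implicit.
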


\begin{proof}
Since the codimension is odd, the basic Euler characteristic is zero. Thus,
we get that $\mathrm{dim}\, H^1_B(M,\mathcal{F})=1$ and the foliation is
taut by the result of Masa.
\end{proof}

\section{Tautness Theorem}

\label{sec:taut} 
As we mentionned in the introduction, the author showed in \cite{Al} that
the cohomology class $\left[ \kappa _{b}\right] \in H_{d}^{1}\left( M,%
\mathcal{F}\right) $ is an invariant of the Riemannian foliation structure
and independent of the choice of bundle-like metric, and the foliation is
taut if and only if $\left[ \kappa _{b}\right] =0$. If in addition the
foliation is transversally oriented, this condition is equivalent to $%
H_{d}^{q}\left( M,\mathcal{F}\right) \neq 0$, which is true if and only if $%
H_{d}^{\ast }\left( M,\mathcal{F}\right) $ satisfies Poincar\'{e} duality.
We now prove the analogous result for our modified basic cohomology $%
\widetilde{H}^{\ast }\left( M,\mathcal{F}\right) $.

First, we observe that the basic projection $\kappa _{b}$ of the mean
curvature one form is always $\widetilde{d}$-exact, because%
\begin{equation*}
\widetilde{d}\left( -2\right) =\left( d-\frac{1}{2}\kappa _{b}\wedge \right)
\left( -2\right) =\kappa _{b}.
\end{equation*}%
Also, we have the following:

\begin{lemma}
A Riemannian foliation $\left( M,\mathcal{F},g_{Q}\right) $ of codimension $%
q $ is taut, then $H_{d}^{\ast }\left( M,\mathcal{F}\right) \cong \widetilde{%
H}^{\ast }\left( M,\mathcal{F}\right) $. The converse is true if the
foliation is assumed to be transversally oriented.
\end{lemma}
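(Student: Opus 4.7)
The plan is to reduce both implications to \`Alvarez L\'opez's characterization of tautness already cited in the excerpt: $(M,\mathcal{F})$ is taut if and only if $[\kappa_b]=0$ in $H_d^1(M,\mathcal{F})$, i.e., $\kappa_b=dh$ for some smooth basic function $h$.

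For the forward direction, assume $\kappa_b=dh$ and consider multiplication by the positive basic function $e^{h/2}$. A short Leibniz-rule calculation, essentially the one already performed in the excerpt in the proof of Corollary~\ref{indepTwBasicCohomCor}, gives
\[
\widetilde{d}\bigl(e^{h/2}\alpha\bigr)=e^{h/2}\,d\alpha
\]
for every basic form $\alpha$. Hence multiplication by $e^{h/2}$ is a degree-preserving isomorphism of cochain complexes $(\Omega^{*}(M,\mathcal{F}),d)\to(\Omega^{*}(M,\mathcal{F}),\widetilde{d})$ (with inverse multiplication by $e^{-h/2}$), and it induces the desired isomorphism $H_d^{*}(M,\mathcal{F})\cong\widetilde{H}^{*}(M,\mathcal{F})$.

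For the converse, suppose $\mathcal{F}$ is transversally oriented and that the graded isomorphism $H_d^{*}(M,\mathcal{F})\cong\widetilde{H}^{*}(M,\mathcal{F})$ holds. Since basic constants give $H_d^{0}(M,\mathcal{F})=\mathbb{R}$, the hypothesis forces $\widetilde{H}^{0}(M,\mathcal{F})\neq 0$. Applying the Poincar\'e duality proved in Theorem~\ref{PoincaredualityTheorem} (which requires only transversal orientation) yields $\widetilde{H}^{q}(M,\mathcal{F})\cong\widetilde{H}^{0}(M,\mathcal{F})\neq 0$, and transferring back through the isomorphism gives $H_d^{q}(M,\mathcal{F})\neq 0$. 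Masa's theorem then forces $\mathcal{F}$ to be taut.

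The main delicate point is not a computation but an interpretive one: one must read the cohomology isomorphism in the converse as degree-preserving, so that $H_d^{0}$ can be matched with $\widetilde{H}^{0}$ and fed into top-degree Poincar\'e duality. This is natural in view of the explicit degree-preserving isomorphism produced by the forward direction. Beyond that, both directions are immediate consequences of results already assembled earlier in the paper (\`Alvarez L\'opez's tautness criterion, Theorem~\ref{PoincaredualityTheorem}, and Masa's theorem), together with the observation that $e^{h/2}$ is a smooth, nowhere-vanishing basic function on $M$ so the conjugation is well-defined on the complex of smooth basic forms.
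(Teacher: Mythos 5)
Your proof is correct and takes essentially the same route as the paper: the forward direction is the same conjugation $\widetilde{d}=e^{h/2}\circ d\circ e^{-h/2}$ coming from the tautness criterion $\kappa_b=dh$ of \`Alvarez-L\'opez, and the converse is the same appeal to twisted Poincar\'e duality (Theorem~\ref{PoincaredualityTheorem}) to conclude $H_d^{q}(M,\mathcal{F})\neq 0$ and hence tautness. You merely spell out the converse a bit more explicitly, tracing $H_d^{0}\cong\mathbb{R}\Rightarrow\widetilde{H}^{0}\neq 0\Rightarrow\widetilde{H}^{q}\neq 0\Rightarrow H_d^{q}\neq 0$ step by step, and you rightly flag that the isomorphism must be read degree-by-degree for this chain to go through.
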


\begin{proof}
If the foliation is taut, by \cite{Al} $\kappa _{b}=df$ for some basic
function $f$. Then $\widetilde{d}=d-\frac{1}{2}df\wedge =e^{\frac{f}{2}%
}\circ d\circ e^{-\frac{f}{2}}$. Thus $\left[ \alpha \right] \mapsto \left[
\exp \left( -\frac{f}{2}\right) \alpha \right] $ yields an isomorphism from $%
\widetilde{H}^{\ast }\left( M,\mathcal{F}\right) $ to $H_{d}^{\ast }\left( M,%
\mathcal{F}\right) $. Conversely, if $H_{d}^{\ast }\left( M,\mathcal{F}%
\right) \cong \widetilde{H}^{\ast }\left( M,\mathcal{F}\right) $, then
Poincar\'{e} duality is satisfied for the ordinary basic cohomology (from
the fact that it is satisfied for our twisted cohomology), which means $%
H_{d}^{q}\left( M,\mathcal{F}\right) \neq 0$. This is equivalent to the
tautness of the foliation if the foliation is assumed to be transversally
oriented.
\end{proof}


\begin{theorem}
\label{TautnessTheorem}A transversally oriented Riemannian foliation $\left(
M,\mathcal{F},g_{Q}\right) $ of codimension $q$ with bundle-like metric $%
g_{M}$ is taut if and only if $\widetilde{H}^{0}\left( M,\mathcal{F}\right)
\cong \widetilde{H}^{q}\left( M,\mathcal{F}\right) \neq 0$.
\end{theorem}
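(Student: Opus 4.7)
The isomorphism $\widetilde{H}^0(M,\mathcal{F})\cong \widetilde{H}^q(M,\mathcal{F})$ is free from Theorem \ref{PoincaredualityTheorem}, so the real content is to prove that tautness is equivalent to $\widetilde{H}^0(M,\mathcal{F})\neq 0$ (equivalently $\widetilde{H}^q(M,\mathcal{F})\neq 0$).

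For the forward direction, assume the foliation is taut. By \`Alvarez L\'opez's theorem, $\kappa_b = dh$ for some basic function $h$. Either quoting the preceding lemma (which gave $\widetilde{H}^*\cong H^*_d$ whenever the foliation is taut) or, more directly, observing that the product rule yields
\[
\widetilde{d}(e^{h/2}) = d(e^{h/2}) - \tfrac{1}{2}\kappa_b\wedge e^{h/2} = \tfrac{1}{2}e^{h/2}\,dh - \tfrac{1}{2}e^{h/2}\,dh = 0,
\]
one sees that $e^{h/2}$ is a nonzero class in $\widetilde{H}^0(M,\mathcal{F})$.

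For the converse, suppose $\widetilde{H}^0(M,\mathcal{F})\neq 0$. By the Hodge decomposition (Proposition \ref{Hodge Theorem}) we may represent a nonzero class by a smooth basic function $f$ with $\widetilde{d}f=0$, i.e.
\[
df = \tfrac{1}{2}\, f\,\kappa_b.
\]
The key step is to show that such an $f$ must be nowhere zero. Along any smooth curve $\gamma:[0,1]\to M$, the chain rule gives the first-order linear ODE $(f\circ\gamma)'(t) = \tfrac{1}{2}(f\circ\gamma)(t)\,\kappa_b(\gamma'(t))$, whose solution is $f(\gamma(t)) = f(\gamma(0))\exp\!\bigl(\tfrac{1}{2}\int_0^t\kappa_b(\gamma'(s))\,ds\bigr)$. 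Hence if $f$ were to vanish at one point, connectedness of $M$ would force $f\equiv 0$, contrary to assumption. So $f$ is nowhere zero; after replacing $f$ by $-f$ if necessary, $f>0$ everywhere, and $h:=2\log f$ is a well-defined smooth basic function with $dh=\kappa_b$. Thus $[\kappa_b]=0$ in $H^1_d(M,\mathcal{F})$, and the foliation is taut by \`Alvarez L\'opez's characterization.

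The only substantive step is the ``nowhere zero'' conclusion, which is dispatched by the ODE uniqueness argument above. Everything else is bookkeeping with Poincar\'e duality, the conformal identity $\widetilde{d}=e^{h/2}\,d\,e^{-h/2}$ already established in the paper, and the tautness criterion of \`Alvarez L\'opez.
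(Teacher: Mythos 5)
Your proof is correct, and the converse direction takes a genuinely different route from the paper's. For the forward direction both arguments rely on \`Alvarez L\'opez's criterion $\kappa_b = dh$ and exhibit $e^{h/2}$ (or, equivalently, invoke the preceding Lemma), so there is no real difference there. The interesting divergence is in the converse. The paper first specializes the bundle-like metric so that $\kappa=\kappa_b$ is basic-harmonic (using Corollary \ref{basicHarmonicChoiceCorollary}, which in turn rests on the Dom\'inguez and March--Min-Oo--Ruh/Mason results), then computes $\Delta_b h = -\tfrac14 h|\kappa|^2$ and deduces $\kappa=0$ by an integration argument. Your approach instead treats the $\widetilde d$-closedness condition $df = \tfrac12 f\,\kappa_b$ as a first-order linear ODE along curves: uniqueness (or the explicit exponential solution) shows that the vanishing set of $f$ is both open and closed, hence $f$ is nowhere zero on connected $M$, and then $h=2\log|f|$ is an explicit global primitive of $\kappa_b$. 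This is more elementary and more self-contained (no need to renormalize the metric or invoke the tenseness theorem), it directly produces the primitive $h$, and it sidesteps the slight terseness of the paper's final integration step. One small caveat worth making explicit: the uniqueness argument requires $M$ connected, which the theorem statement does not say in so many words; but the paper assumes this throughout (and its own integration argument needs it too), so this is a shared, harmless hypothesis rather than a gap in your proof.
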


\begin{proof}
If $\left( M,\mathcal{F},g_{Q}\right) $ is taut, by the above Lemma we
conclude that 
\begin{equation*}
\widetilde{H}^{0}\left( M,\mathcal{F}\right) \cong \widetilde{H}^{q}\left( M,%
\mathcal{F}\right) \cong H_{d}^{q}\left( M,\mathcal{F}\right) \neq 0.
\end{equation*}

For the converse, we first assume that our metric is chosen so that the mean
curvature is basic-harmonic, meaning that $\kappa =\kappa _{b}$ and%
\begin{equation*}
\delta _{b}\kappa =\left( -\overline{\ast }d\overline{\ast }+\kappa
\lrcorner \right) \kappa =-\overline{\ast }d\overline{\ast }\kappa
+\left\vert \kappa \right\vert ^{2}=0.
\end{equation*}%
As we said before, it is always possible to choose the bundle-like metric
this way and our twisted basic cohomology groups are not affected. If $%
\widetilde{H}^{0}\left( M,\mathcal{F}\right) \neq 0$, there exists a
nontrivial basic function $h$ such that $\widetilde{d}h=0$. Hence $dh=\frac{1%
}{2}h\kappa _{b}=\frac{1}{2}h\kappa $. Then 
\begin{eqnarray*}
\Delta _{b}h &=&\delta _{b}dh=\frac{1}{2}\delta _{b}\left( h\kappa \right) =%
\frac{1}{2}\left( -\overline{\ast }d\overline{\ast }+\kappa \lrcorner
\right) \left( h\kappa \right) \\
&=&\frac{1}{2}\left( -\overline{\ast }\left( dh\wedge \right) \overline{\ast 
}\kappa -h\overline{\ast }d\overline{\ast }\kappa +h\left\vert \kappa
\right\vert ^{2}\right) \\
&=&\frac{1}{2}\left( -\frac{1}{2}h\overline{\ast }\left( \kappa \wedge
\left( \overline{\ast }\kappa \right) \right) -h\left\vert \kappa
\right\vert ^{2}+h\left\vert \kappa \right\vert ^{2}\right) \\
&=&-\frac{1}{4}h\left\vert \kappa \right\vert ^{2}.
\end{eqnarray*}%
%
%
%
%
%
%
%
%
%
%
The integral over $M$ on both sides yields $\left\vert \kappa \right\vert =0$%
. Thus, $\left( M,\mathcal{F}\right) $ is taut.
\end{proof}

\section{The basic signature operator}

\label{basicSignatureSection}
With notation as in Section \ref{Formula Section}, suppose that $\left( M,%
\mathcal{F},g_{Q}\right) $ is a transversally oriented Riemannian foliation
of even codimension $q$, and let $g_{M}$ be a specific compatible
bundle-like metric. Let 
\begin{equation*}
\bigstar =i^{k\left( k-1\right) +\frac{q}{2}}\overline{\ast }
\end{equation*}%
as an operator on basic $k$-forms, analogous to the involution used to
identify self-dual and anti-self-dual forms on a manifold. Note that this
endomorphism is symmetric, and 
\begin{equation*}
\bigstar ^{2}=1.
\end{equation*}

\begin{proposition}
We have \vspace{0in}$\bigstar \left( \widetilde{d}+\widetilde{\delta }%
\right) =-\left( \widetilde{d}+\widetilde{\delta }\right) \bigstar $. In
fact, $\bigstar \widetilde{d}=-\widetilde{\delta }\bigstar $ and $\bigstar 
\widetilde{\delta }=-\widetilde{d}\bigstar $.
\end{proposition}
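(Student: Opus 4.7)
The plan is to reduce the two sharper identities $\bigstar\widetilde{d}=-\widetilde{\delta}\bigstar$ and $\bigstar\widetilde{\delta}=-\widetilde{d}\bigstar$ (which together yield $\bigstar(\widetilde{d}+\widetilde{\delta})=-(\widetilde{d}+\widetilde{\delta})\bigstar$ by addition) to the commutation relations already in Proposition \ref{Formulas Proposition}, combined with an index-chasing check on the power of $i$. The essential observation is that the exponent $k(k-1)+q/2$ in the definition of $\bigstar$ depends on the degree of the form being acted on, so whenever $\widetilde{d}$ or $\widetilde{\delta}$ shifts that degree by $\pm 1$, the exponent shifts by a specific amount which must cancel the signs $(-1)^{k+1}$ and $(-1)^{k}$ produced by items (5) and (6) of Proposition \ref{Formulas Proposition}. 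Note that $q$ is even, so $i^{q/2}$ is a well-defined unit that factors out of every line of the calculation.

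First I would fix a basic $k$-form $\alpha$ and unwrap both sides of $\bigstar\widetilde{d}\alpha = -\widetilde{\delta}\bigstar\alpha$. Since $\widetilde{d}\alpha$ has degree $k+1$, the left-hand side equals $i^{(k+1)k + q/2}\,\overline{\ast}\widetilde{d}\alpha$. For the right-hand side, item (5) of Proposition \ref{Formulas Proposition} gives $\widetilde{\delta}\overline{\ast}\alpha = (-1)^{k+1}\overline{\ast}\widetilde{d}\alpha$, so $-\widetilde{\delta}\bigstar\alpha = -i^{k(k-1) + q/2}(-1)^{k+1}\,\overline{\ast}\widetilde{d}\alpha$. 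The identity then reduces to the arithmetic check $i^{(k+1)k - k(k-1)} = i^{2k} = (-1)^k$, which is immediate.

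Next I would run the analogous computation for $\bigstar\widetilde{\delta}\alpha = -\widetilde{d}\bigstar\alpha$. Since $\widetilde{\delta}\alpha$ has degree $k-1$, the left-hand side is $i^{(k-1)(k-2)+q/2}\,\overline{\ast}\widetilde{\delta}\alpha$, and item (6) converts $\overline{\ast}\widetilde{\delta}\alpha = (-1)^k \widetilde{d}\overline{\ast}\alpha$. On the right, $-\widetilde{d}\bigstar\alpha = -i^{k(k-1)+q/2}\widetilde{d}\overline{\ast}\alpha$. Again the identity reduces to a numerical check: $i^{(k-1)(k-2)-k(k-1)} = i^{-2(k-1)} = (-1)^{k-1} = (-1)^{k+1}$, matching the required sign. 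Summing these two identities yields the first claim.

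There is no genuine obstacle here; the proof is purely symbolic. The only care required is to recognize that the exponent in $\bigstar$ must be evaluated at the correct degree on each side of every equation, and to keep track of the fact that $i^{q/2}$ and $i^{k(k-1)}$ combine cleanly because of the parity of $q$. In fact, the specific normalization $k(k-1)+q/2$ is chosen precisely so that the sign discrepancies between items (5) and (6) of Proposition \ref{Formulas Proposition} cancel uniformly in $k$; this retroactively motivates the definition of $\bigstar$.
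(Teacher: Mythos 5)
Your proof is correct and follows essentially the same route as the paper: unwind $\bigstar$ at the appropriate shifted degree, invoke the $\overline{\ast}$-commutation identities from Proposition~\ref{Formulas Proposition}, and verify the sign by elementary arithmetic with powers of $i$. The only cosmetic difference is that you prove the two sharper identities separately (using items (5) and (6)) while the paper treats $\bigstar(\widetilde{d}+\widetilde{\delta})$ in one combined computation (using items (7) and (6)), and since (5)/(7) and (6)/(8) are just side-switched versions of each other, this is the same argument.
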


\begin{proof}
By Proposition \ref{Formulas Proposition}, we have that, as an operator on
basic $k$-forms, 
\begin{eqnarray*}
\bigstar \left( \widetilde{d}+\widetilde{\delta }\right) &=&\bigstar 
\widetilde{d}+\bigstar \widetilde{\delta } \\
&=&i^{\left( k+1\right) \left( k\right) +\frac{q}{2}}\overline{\ast }%
\widetilde{d}+i^{\left( k-1\right) \left( k-2\right) +\frac{q}{2}}\overline{%
\ast }\widetilde{\delta } \\
&=&i^{\left( k+1\right) \left( k\right) +\frac{q}{2}}\left( -1\right) ^{k+1}%
\widetilde{\delta }\overline{\ast }+i^{\left( k-1\right) \left( k-2\right) +%
\frac{q}{2}}\left( -1\right) ^{k}\widetilde{d}\overline{\ast } \\
&=&i^{k^{2}+k+2k+2+\frac{q}{2}}\widetilde{\delta }\overline{\ast }%
+i^{k^{2}-3k+2+2k+\frac{q}{2}}\widetilde{d}\overline{\ast } \\
&=&i^{k\left( k-1\right) +4k+2+\frac{q}{2}}\widetilde{\delta }\overline{\ast 
}+i^{k\left( k-1\right) +2+\frac{q}{2}}\widetilde{d}\overline{\ast } \\
&=&-\left( \widetilde{\delta }+\widetilde{d}\right) i^{k\left( k-1\right) +%
\frac{q}{2}}\overline{\ast }=-\left( \widetilde{d}+\widetilde{\delta }%
\right) \bigstar .
\end{eqnarray*}%
From the computation, we see that $\bigstar \widetilde{d}=-\widetilde{\delta 
}\bigstar $ and $\bigstar \widetilde{\delta }=-\widetilde{d}\bigstar $.
\end{proof}

Let $\Omega^{+}\left( M,\mathcal{F}\right) $ denote the $+1$ eigenspace of $%
\bigstar $ in $\Omega^{\ast }\left( M,\mathcal{F}\right) $, and let $%
\Omega^{-}\left( M,\mathcal{F}\right) $ denote the $-1$ eigenspace of $%
\bigstar $ in $\Omega^{\ast }\left( M,\mathcal{F}\right) $. By the
proposition above, $D_{b}=\widetilde{d}+\widetilde{\delta }$ maps $%
\Omega^{\pm }\left( M,\mathcal{F}\right) $ to $\Omega^{\mp }\left( M,%
\mathcal{F}\right) $. Therefore, we may define the basic signature operator
as follows.

\begin{definition}
On a transversally oriented Riemannian foliation of even codimension, let
the \textbf{basic signature operator} be the operator $D_{b}:\Omega
^{+}\left( M,\mathcal{F}\right) \rightarrow \Omega ^{-}\left( M,\mathcal{F}%
\right) $. We define the \textbf{basic signature }$\sigma \left( M,\mathcal{F%
}\right) $ \textbf{of the foliation} to be the index%
\begin{equation*}
\sigma \left( M,\mathcal{F}\right) =\dim \ker \left( \left. \widetilde{%
\Delta }\right\vert _{\Omega ^{+}\left( M,\mathcal{F}\right) }\right) -\dim
\ker \left( \left. \widetilde{\Delta }\right\vert _{\Omega ^{-}\left( M,%
\mathcal{F}\right) }\right) .
\end{equation*}
\end{definition}

\begin{remark}
We note that such a definition is not possible for the operator $d+\delta
_{b}$, because the relationship in the proposition above does not hold for $%
d+\delta _{b}$.
\end{remark}

\begin{remark}
The basic signature was previously defined in \cite{EK2} and used in \cite%
{HatDj}, but in both cases the definitions and results hold only for the
special case of Riemannian foliations with minimal leaves. If the mean
curvature is zero, the reader may verify that $D_{b}$ and the signature $%
\sigma \left( M,\mathcal{F}\right) $ above coincide with the operator and
signature used in these other papers.
\end{remark}

\section{The twisted basic Laplacian, curvature, and tautness}

\label{Wein}

First, let $\chi _{\mathcal{F}}$ denote the characteristic form of any
oriented foliation $\mathcal{F}$ on a Riemannian manifold; this is the
leafwise volume form of the foliation, locally given by 
\begin{equation*}
\chi _{\mathcal{F}}=e_{1}^{\ast }\wedge ...\wedge e_{p}^{\ast },
\end{equation*}%
where $\left( e_{1},...,e_{p},e_{p+1},...,e_{n}\right) $ is a local
orthonormal frame of the $TM$ such that $\left( e_{1},...,e_{p}\right) $ is
a local orthonormal frame of $T\mathcal{F}$. Then Rummler's formula (see 
\cite{Rum}) gives%
\begin{equation}
d\chi _{\mathcal{F}}=-\kappa \wedge \chi _{\mathcal{F}}+\varphi _{0},
\label{RummlerFormula}
\end{equation}%
where $\varphi_{0}$ is a $\left( p+1\right) $-form on $M$ with the property
that $v_{1} \lrcorner v_{2}\lrcorner \cdots\lrcorner v_{p}\lrcorner \varphi
_{0}=0$ if $v_{j}\in T_{x}\mathcal{F}$, $1\leq j\leq p$, are leafwise
vectors.

We now determine relationships between the eigenvalues of the twisted basic
Laplacian $\widetilde{\Delta }$ and curvature. First we do a few
computations that will be useful later.

\begin{lemma}
\label{LieDerivativeFactsLemma}We have the following facts about the Lie
derivative. If $\alpha $ is any form, $V,W$ are vector fields, then

\begin{enumerate}
\item \label{CartanFormula}$\mathcal{L}_{V}=d\circ \left( V\lrcorner \right)
+\left( V\lrcorner \right) \circ d$

\item \label{AdjointCartanFormula}$\mathcal{L}_{V}^{\ast }=\left( V^{\flat
}\wedge \right) \circ \delta +\delta \circ \left( V^{\flat }\wedge \right) $

\item \label{LieDerivativeLeibnizRule}$\mathcal{L}_{V}\circ \left( \alpha
\wedge \right) =\left( \mathcal{L}_{V}\left( \alpha \right) \wedge \right)
+\left( \alpha \wedge \right) \circ \mathcal{L}_{V}$

\item \label{LieDerivIntProductLeibnizRule}$\mathcal{L}_{V}\circ \left(
W\lrcorner \right) =\left( \mathcal{L}_{V}\left( W\right) \lrcorner \right)
+\left( W\lrcorner \right) \circ \mathcal{L}_{V}$

\item \label{AdjointLieDerivLeibnizRule}$\mathcal{L}_{V}^{\ast }\circ \left(
\alpha \wedge \right) =\left( \alpha \wedge \right) \circ \mathcal{L}%
_{V}^{\ast }-\left( \left( \mathcal{L}_{V}\left( \alpha ^{\#}\right) \right)
^{\flat }\wedge \right) $ if $\alpha $ is a one-form.
\end{enumerate}
\end{lemma}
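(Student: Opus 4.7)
The plan is to treat identities (1), (3), (4) as the ``input'' classical differential-geometric facts about $\mathcal{L}_V$, and to derive (2) and (5) from them by taking formal $L^2$-adjoints. The core algebraic observation that ties everything together is that the $L^2$-adjoint of $(V^{\flat}\wedge)$ is $(V\lrcorner)$ (and vice versa), and the adjoint of $d$ is $\delta$; both facts are pointwise for the wedge/interior product pair, and global (on the closed manifold $M$) for $d$ and $\delta$.

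For (1), I would simply cite Cartan's magic formula, or prove it by verifying that both sides are derivations on $\Omega^\ast(M)$ that agree on functions and on exact one-forms (where they both give $d(V\cdot f)$ and $0$ respectively, up to standard checks). For (3), the Lie derivative is a tensor derivation, so $\mathcal{L}_V(\alpha\wedge\beta)=\mathcal{L}_V(\alpha)\wedge\beta+\alpha\wedge\mathcal{L}_V(\beta)$ gives (3) directly as an operator identity. For (4), the Lie derivative of the contraction pairing gives $\mathcal{L}_V(W\lrcorner\beta)=(\mathcal{L}_V W)\lrcorner\beta+W\lrcorner\mathcal{L}_V\beta$; since $\mathcal{L}_V W=[V,W]$, this is exactly the stated formula.

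For (2), I take the formal $L^2$-adjoint of (1):
\begin{equation*}
\mathcal{L}_V^{\ast}=\bigl(d\circ(V\lrcorner)+(V\lrcorner)\circ d\bigr)^{\ast}=(V\lrcorner)^{\ast}\circ d^{\ast}+d^{\ast}\circ(V\lrcorner)^{\ast}=(V^{\flat}\wedge)\circ\delta+\delta\circ(V^{\flat}\wedge),
\end{equation*}
which is exactly (2). For (5), I apply the adjoint to (4) with the particular choice $W=\alpha^{\#}$, where $\alpha$ is a one-form. Since $(W\lrcorner)^{\ast}=W^{\flat}\wedge=\alpha\wedge$, the adjoint of (4) reads
\begin{equation*}
(\alpha\wedge)\circ\mathcal{L}_V^{\ast}=\bigl((\mathcal{L}_V\alpha^{\#})^{\flat}\wedge\bigr)+\mathcal{L}_V^{\ast}\circ(\alpha\wedge),
\end{equation*}
and rearranging yields (5).

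I do not expect any serious obstacle here: each item is either a textbook identity or a one-line adjoint manipulation. The only place to be careful is bookkeeping in (5) --- making sure the term $(\mathcal{L}_V(\alpha^{\#}))^{\flat}\wedge$ lands on the correct side with the correct sign, and that one applies adjoints to the factor in (4) that reads $(\alpha^{\#}\lrcorner)$ (which pairs with $\alpha\wedge$) rather than to $\alpha$ itself. Provided that index of operators and the identification $(V\lrcorner)^{\ast}=V^{\flat}\wedge$ are tracked honestly, the whole lemma follows without further effort.
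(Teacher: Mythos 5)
Your proposal is correct and follows essentially the same route as the paper: the authors also invoke Cartan's magic formula for (1)--(2) and cite the remaining Leibniz-type identities as standard ``up to taking adjoints,'' which is precisely your strategy of deriving (2) and (5) from (1) and (4) by formal $L^2$-adjunction. Your explicit verification of (5) --- setting $W=\alpha^{\#}$ in the adjoint of (4) and noting $(W\lrcorner)^{\ast}=\alpha\wedge$ --- is the correct bookkeeping and matches what the paper leaves implicit.
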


\begin{proof}
The first two facts follow from the Cartan formulas. The formulas after that
are standard (up to taking adjoints) and can be found in \cite{Lee}.
\end{proof}

\begin{lemma}
The operator $\left( \mathcal{L}_{V}+\mathcal{L}_{V}^{\ast }\right) $ is
zeroth order and thus commutes with multiplication by a function.
\end{lemma}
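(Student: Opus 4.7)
The plan is to verify that $\mathcal{L}_V + \mathcal{L}_V^*$ has vanishing commutator with multiplication by every smooth function $f$. Since a differential operator on sections of a vector bundle is zeroth order (i.e.\ a bundle map) if and only if it commutes with multiplication by all smooth functions, this will immediately yield the lemma.

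First I would compute $[\mathcal{L}_V, f]$ via part (1) of Lemma \ref{LieDerivativeFactsLemma}, using that $V \lrcorner$ is $C^\infty(M)$-linear and $[d, f] = df \wedge$. The resulting expression
\[
[\mathcal{L}_V, f] = (df \wedge)(V \lrcorner) + (V \lrcorner)(df \wedge)
\]
collapses, by the graded Leibniz rule $(V \lrcorner)(df \wedge \alpha) = df(V)\alpha - df \wedge (V \lrcorner)\alpha$, to the scalar operator $df(V)\cdot \mathrm{Id}$. An analogous computation using part (2), $\mathcal{L}_V^* = (V^\flat \wedge)\delta + \delta(V^\flat \wedge)$, exploits that $V^\flat \wedge$ is $C^\infty(M)$-linear and that $[\delta, f] = -(df)^\sharp \lrcorner$ (obtained by taking the formal $L^2$-adjoint of $[d, f] = df \wedge$). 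The same Leibniz-type collapse applied to $(df)^\sharp \lrcorner (V^\flat \wedge \alpha)$ yields
\[
[\mathcal{L}_V^*, f] = -(V^\flat \wedge)((df)^\sharp \lrcorner) - ((df)^\sharp \lrcorner)(V^\flat \wedge) = -V^\flat((df)^\sharp)\cdot \mathrm{Id} = -df(V)\cdot \mathrm{Id}.
\]
Summing, $[\mathcal{L}_V + \mathcal{L}_V^*, f] = 0$, as required.

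Conceptually the statement is the observation that the principal symbols of $\mathcal{L}_V$ and $\mathcal{L}_V^*$ at a covector $\xi$ are $\xi(V)\cdot \mathrm{Id}$ and $-\xi(V)\cdot \mathrm{Id}$ respectively, so they cancel and $\mathcal{L}_V + \mathcal{L}_V^*$ becomes a tensorial bundle map (measuring, up to a factor, how far $V$ is from being Killing). The main, and genuinely only, point requiring care is the sign in $[\delta, f] = -(df)^\sharp \lrcorner$; once confirmed by the routine adjoint calculation, the rest of the argument proceeds mechanically from Lemma \ref{LieDerivativeFactsLemma} and the Leibniz rules for wedge and interior products.
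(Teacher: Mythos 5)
Your proof is correct and takes essentially the same approach as the paper, which merely asserts that computing the commutator $[\mathcal{L}_V + \mathcal{L}_V^*, m_f]$ gives zero; you have supplied the details of that computation using the Cartan formulas and the Leibniz rule. (A minor shortcut you could note: since $[\mathcal{L}_V, f] = V(f)\cdot\mathrm{Id}$ is self-adjoint, taking formal adjoints immediately yields $[\mathcal{L}_V^*, f] = -V(f)\cdot\mathrm{Id}$ without a second direct computation.)
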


\begin{proof}
One may easily compute the commutator $\left[ \left( \mathcal{L}_{V}+%
\mathcal{L}_{V}^{\ast }\right) ,m_{f}\right] $, where $m_{f}$ denotes
multiplication by a function $f$, and the result is zero.
\end{proof}

\begin{lemma}
\label{OperatorAppl2WedgeProdLemma}If $\alpha $ is a one-form, $\beta $ is
any form, and $V$ is a vector field, then 
\[
\left( \mathcal{L}_{V}+\mathcal{L}%
_{V}^{\ast }\right) \left( \alpha \wedge \beta \right) =\alpha \wedge \left( 
\mathcal{L}_{V}+\mathcal{L}_{V}^{\ast }\right) \beta +\gamma \wedge \beta ,
\]
where%
\begin{eqnarray*}
\gamma &=&\mathcal{L}_{V}\left( \alpha \right) -\left( \mathcal{L}_{V}\left(
\alpha ^{\#}\right) \right) ^{\flat } \\
&=&\left( \mathcal{L}_{V}g\right) \left( \alpha ^{\#},\bullet \right) \\
&=&\nabla _{\alpha ^{\#}}V^{\flat }+\alpha \left( \nabla _{\bullet }V\right)
,
\end{eqnarray*}%
where $\mathcal{L}_{V}g$ is the Lie derivative of the metric tensor.
\end{lemma}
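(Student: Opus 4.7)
The plan is to apply the previous Lemma~\ref{LieDerivativeFactsLemma} directly to split the operator across the wedge product, then compute the resulting ``defect'' term in three equivalent forms.

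First I would compute $\mathcal{L}_{V}(\alpha\wedge\beta)$ using item~(\ref{LieDerivativeLeibnizRule}) of Lemma~\ref{LieDerivativeFactsLemma}, obtaining $\mathcal{L}_{V}(\alpha)\wedge\beta + \alpha\wedge\mathcal{L}_{V}\beta$. Next I would compute $\mathcal{L}_{V}^{\ast}(\alpha\wedge\beta)$ by applying item~(\ref{AdjointLieDerivLeibnizRule}) to the operator $\mathcal{L}_{V}^{\ast}\circ(\alpha\wedge)$ applied to $\beta$, yielding $\alpha\wedge\mathcal{L}_{V}^{\ast}\beta - (\mathcal{L}_{V}(\alpha^{\#}))^{\flat}\wedge\beta$. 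Adding the two expressions and grouping the $(\alpha\wedge)$ terms gives the main identity with
\[
\gamma = \mathcal{L}_{V}(\alpha) - (\mathcal{L}_{V}(\alpha^{\#}))^{\flat}.
\]

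For the second equality, I would evaluate $\gamma$ on an arbitrary vector field $W$. Since $\mathcal{L}_{V}(\alpha)(W) = V(\alpha(W)) - \alpha([V,W]) = V(g(\alpha^{\#},W)) - g(\alpha^{\#},[V,W])$ and $(\mathcal{L}_{V}(\alpha^{\#}))^{\flat}(W) = g([V,\alpha^{\#}],W)$, subtracting gives exactly
\[
V(g(\alpha^{\#},W)) - g([V,\alpha^{\#}],W) - g(\alpha^{\#},[V,W]) = (\mathcal{L}_{V}g)(\alpha^{\#},W),
\]
by the usual formula for the Lie derivative of a $(0,2)$-tensor.

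For the third equality, I would use the Koszul identity $(\mathcal{L}_{V}g)(X,Y) = g(\nabla_{X}V,Y) + g(X,\nabla_{Y}V)$, which follows from torsion-freeness and metric compatibility of the Levi-Civita connection. Specializing $X=\alpha^{\#}$ gives $\gamma(W) = g(\nabla_{\alpha^{\#}}V,W) + \alpha(\nabla_{W}V)$, i.e.\ $\gamma = \nabla_{\alpha^{\#}}V^{\flat} + \alpha(\nabla_{\bullet}V)$. There is no real obstacle here; the only point requiring care is the sign and bookkeeping in item~(\ref{AdjointLieDerivLeibnizRule}), which is the one non-obvious input, and the standard identification between the Lie derivative and the covariant derivative in the final step.
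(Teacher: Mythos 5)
Your proposal is correct and follows essentially the same route as the paper: splitting $\mathcal{L}_V$ and $\mathcal{L}_V^*$ across the wedge product via parts~(\ref{LieDerivativeLeibnizRule}) and~(\ref{AdjointLieDerivLeibnizRule}) of Lemma~\ref{LieDerivativeFactsLemma}, then evaluating $\gamma$ on a test vector field. The only cosmetic difference is in the last equality, where you quote the identity $(\mathcal{L}_V g)(X,Y)=g(\nabla_X V,Y)+g(X,\nabla_Y V)$ as known, while the paper re-derives it inline from metric compatibility and torsion-freeness --- the same computation either way.
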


\begin{proof}
By Lemma \ref{LieDerivativeFactsLemma}, part (\ref{LieDerivativeLeibnizRule}%
), we have 
\begin{equation*}
\mathcal{L}_{V}\left( \alpha \wedge \beta \right) =\mathcal{L}_{V}\left(
\alpha \right) \wedge \beta +\alpha \wedge \mathcal{L}_{V}\left( \beta
\right) .
\end{equation*}%
Now using the part (\ref{AdjointLieDerivLeibnizRule}) of the same lemma,%
\begin{equation*}
\mathcal{L}_{V}^{\ast }\left( \alpha \wedge \beta \right) =\alpha \wedge 
\mathcal{L}_{V}^{\ast }\left( \beta \right) -\left( \mathcal{L}_{V}\left(
\alpha ^{\#}\right) \right) ^{\flat }\wedge \beta .
\end{equation*}%
The result with the first expression for $\gamma $ follows from adding the
two equations. Also, for any vector field $Z$,%
\begin{eqnarray*}
\gamma \left( Z\right) &=&\mathcal{L}_{V}\left( \alpha \right) \left(
Z\right) -\left( \mathcal{L}_{V}\left( \alpha ^{\#}\right) ,Z\right) \\
&=&V\left( \alpha ^{\#},Z\right) -\left( \alpha ^{\#},\mathcal{L}%
_{V}Z\right) -\left( \mathcal{L}_{V}\left( \alpha ^{\#}\right) ,Z\right) \\
&=&\left( \mathcal{L}_{V}g\right) \left( \alpha ^{\#},Z\right) ,
\end{eqnarray*}%
and also from the second line above we have%
\begin{eqnarray*}
\gamma \left( Z\right) &=&\left( \nabla _{V}\alpha ^{\#},Z\right) +\left(
\alpha ^{\#},\nabla _{V}Z\right) -\left( \alpha ^{\#},\mathcal{L}%
_{V}Z\right) -\left( \mathcal{L}_{V}\left( \alpha ^{\#}\right) ,Z\right) \\
&=&\left( \nabla _{\alpha ^{\#}}V,Z\right) +\left( \alpha ^{\#},\nabla
_{Z}V\right) .
\end{eqnarray*}
\end{proof}

\begin{corollary}
\label{oneFormCorollary}If $\alpha $ is a one-form and $V$ is a vector
field, then $\left( \mathcal{L}_{V}+\mathcal{L}_{V}^{\ast }\right) \alpha
=\left( \delta V^{\flat }\right) \alpha +\gamma $, with $\gamma $ given in
the previous Lemma, and we also have $\left( \left( \mathcal{L}_{V}+\mathcal{%
L}_{V}^{\ast }\right) \alpha ,\alpha \right) =\left( \delta V^{\flat
}\right) \left( \alpha ,\alpha \right) +2\left( \nabla _{\alpha
^{\#}}V,\alpha ^{\#}\right) $.
\end{corollary}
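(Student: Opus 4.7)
The plan is to specialize Lemma \ref{OperatorAppl2WedgeProdLemma} to the situation where the ``other factor'' in the wedge product is the constant function $1$. Since $\alpha\wedge 1=\alpha$, the lemma gives
\[
(\mathcal{L}_V+\mathcal{L}_V^{\ast})\alpha
=\alpha\wedge(\mathcal{L}_V+\mathcal{L}_V^{\ast})(1)+\gamma,
\]
with $\gamma$ as in that lemma. So I first need to evaluate $(\mathcal{L}_V+\mathcal{L}_V^{\ast})(1)$ on functions. Clearly $\mathcal{L}_V(1)=0$. For $\mathcal{L}_V^{\ast}$ I will use Lemma \ref{LieDerivativeFactsLemma}(\ref{AdjointCartanFormula}): $\mathcal{L}_V^{\ast}=(V^{\flat}\wedge)\delta+\delta(V^{\flat}\wedge)$. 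Applying this to the $0$-form $1$, the first term vanishes because $\delta 1=0$, and the second gives $\delta V^{\flat}$. Thus $(\mathcal{L}_V+\mathcal{L}_V^{\ast})(1)=\delta V^{\flat}$, a function, and wedging it against $\alpha$ is just scalar multiplication, yielding the first claimed identity
\[
(\mathcal{L}_V+\mathcal{L}_V^{\ast})\alpha=(\delta V^{\flat})\alpha+\gamma.
\]

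For the pairing with $\alpha$, I will take the pointwise $L^2$ inner product of both sides with $\alpha$. The scalar term contributes $(\delta V^{\flat})(\alpha,\alpha)$. For the $\gamma$ piece, I use the third expression from Lemma \ref{OperatorAppl2WedgeProdLemma}, namely $\gamma(Z)=(\nabla_{\alpha^{\#}}V,Z)+(\alpha^{\#},\nabla_Z V)$, to compute $(\gamma,\alpha)=\gamma(\alpha^{\#})$. The two terms in $\gamma(\alpha^{\#})$ coincide — both equal $(\nabla_{\alpha^{\#}}V,\alpha^{\#})$ — giving the factor of $2$ and producing the stated formula.

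Both steps are essentially formal bookkeeping on top of the preceding lemmas; there is no real obstacle. The only thing worth being careful about is the choice of which expression for $\gamma$ to use in the inner product step — the third form (in terms of $\nabla$) is the convenient one, since it is exactly what appears in the conclusion. No new identities or clever manipulations are required beyond what the preceding two lemmas already supply.
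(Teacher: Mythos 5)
Your proof is correct and follows exactly the paper's route: apply Lemma \ref{OperatorAppl2WedgeProdLemma} with $\beta=1$, compute $(\mathcal{L}_{V}+\mathcal{L}_{V}^{\ast})(1)=\delta V^{\flat}$ via the adjoint Cartan formula, and then pair with $\alpha$ using the $\nabla$-expression for $\gamma$. The paper's own proof is just the one-line observation $(\mathcal{L}_{V}+\mathcal{L}_{V}^{\ast})(1)=\delta V^{\flat}$; you have simply filled in the same bookkeeping.
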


\begin{proof}
It follows from $\left( \mathcal{L}_{V}+\mathcal{L}_{V}^{\ast }\right)
\left( 1\right) =\delta V^{\flat }$.
\end{proof}

\begin{corollary}
\label{divergenceCorollary}Let $\alpha =\sum \alpha _{\tau }e^{\tau }$ be
any form, with $\tau =\left( \tau _{1},...,\tau _{k}\right) $ a multi-index
and $e^{\tau }=e^{\tau _{1}}\wedge ...\wedge e^{\tau _{k}}$. Then for any
vector field $V$, 
\begin{equation*}
\left( \mathcal{L}_{V}+\mathcal{L}_{V}^{\ast }\right) \left( \alpha \right)
=\left( \delta V^{\flat }\right) \alpha +\sum_{j=1}^{k}\alpha _{\tau
}e^{\tau _{1}}\wedge ...\wedge \gamma ^{j}\wedge ...\wedge e^{\tau _{k}},
\end{equation*}%
with $\gamma ^{j}=\left( \mathcal{L}_{V}g\right) \left( e_{\tau
_{j}},\bullet \right) $ replacing the $e^{\tau _{j}}$.
\end{corollary}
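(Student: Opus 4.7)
The plan is to prove this by reducing to a pure wedge product of coordinate one-forms and then inducting on the degree, using Lemma \ref{OperatorAppl2WedgeProdLemma} at each step.

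First I would invoke the previous lemma, which says that $\mathcal{L}_V+\mathcal{L}_V^\ast$ is zeroth order and therefore $C^\infty(M)$-linear. Applied to $\alpha=\sum_\tau \alpha_\tau\, e^\tau$, this gives
\[
(\mathcal{L}_V+\mathcal{L}_V^\ast)(\alpha)=\sum_\tau \alpha_\tau\,(\mathcal{L}_V+\mathcal{L}_V^\ast)(e^\tau),
\]
so it is enough to establish the formula on each monomial $e^\tau=e^{\tau_1}\wedge\cdots\wedge e^{\tau_k}$.

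Next I would proceed by induction on $k$. For the base case $k=1$, Corollary \ref{oneFormCorollary} gives exactly $(\mathcal{L}_V+\mathcal{L}_V^\ast)e^{\tau_1}=(\delta V^\flat)e^{\tau_1}+\gamma^1$, where $\gamma^1=(\mathcal{L}_V g)(e_{\tau_1},\bullet)$, matching the claim. For the inductive step, write $e^\tau=e^{\tau_1}\wedge\beta$ with $\beta=e^{\tau_2}\wedge\cdots\wedge e^{\tau_k}$ and apply Lemma \ref{OperatorAppl2WedgeProdLemma}:
\[
(\mathcal{L}_V+\mathcal{L}_V^\ast)(e^{\tau_1}\wedge\beta)=e^{\tau_1}\wedge (\mathcal{L}_V+\mathcal{L}_V^\ast)\beta+\gamma^1\wedge\beta,
\]
where by the second form given in Lemma \ref{OperatorAppl2WedgeProdLemma} we have $\gamma^1=(\mathcal{L}_V g)(e_{\tau_1},\bullet)$, which is precisely the $j=1$ term in the desired sum. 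Inserting the inductive hypothesis
\[
(\mathcal{L}_V+\mathcal{L}_V^\ast)\beta=(\delta V^\flat)\beta+\sum_{j=2}^{k} e^{\tau_2}\wedge\cdots\wedge\gamma^j\wedge\cdots\wedge e^{\tau_k}
\]
and wedging with $e^{\tau_1}$ on the left produces the $j=2,\dots,k$ terms together with the single $(\delta V^\flat)(e^{\tau_1}\wedge\beta)$ contribution. Adding the $\gamma^1\wedge\beta$ piece completes the identity on $e^\tau$. Multiplying by $\alpha_\tau$ and summing over $\tau$ yields the corollary.

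The main obstacle, which is really just a bookkeeping issue, is matching the $\gamma^j$ substitutions from Lemma \ref{OperatorAppl2WedgeProdLemma} into the correct slot of the wedge product as the induction unrolls; both the Leibniz-style identity for $\mathcal{L}_V$ and its adjoint version leave the non-selected one-forms untouched, so no extra cross-terms appear and the sum cleanly collects into the stated form. No genuinely new computation is required beyond what is already packaged in Lemma \ref{OperatorAppl2WedgeProdLemma} and Corollary \ref{oneFormCorollary}.
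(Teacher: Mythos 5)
Your proof is correct and is exactly the argument the paper intends, which is why it leaves the corollary without a written proof: factoring the coefficient functions through $\mathcal{L}_V+\mathcal{L}_V^\ast$ via its zeroth-order property, then peeling off one-forms from the monomial $e^\tau$ one at a time with Lemma~\ref{OperatorAppl2WedgeProdLemma}, with Corollary~\ref{oneFormCorollary} (or the trivial $k=0$ case $(\mathcal{L}_V+\mathcal{L}_V^\ast)(1)=\delta V^\flat$) anchoring the induction. No gap; the bookkeeping you flag is handled correctly.
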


\begin{lemma}
\label{AdjLieDerivFormulaLemma}If $V$ is any vector field such that $%
dV^{\flat }=0$, then%
\begin{equation*}
\mathcal{L}_{V}^{\ast }=\mathcal{L}_{V}-2\nabla _{V}-\left( \delta V^{\flat
}\right)
\end{equation*}%
as operators on forms.
\end{lemma}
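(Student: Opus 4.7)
The plan is to split the Lie derivative as $\mathcal{L}_V = \nabla_V + B$ with $B$ a zeroth-order (tensorial) derivation of $\Omega^{*}(M)$, take formal adjoints term by term, and use the hypothesis $dV^{\flat}=0$ to make $B$ pointwise self-adjoint.

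First, I would establish the decomposition. Applying torsion-freeness $[V,W]=\nabla_V W-\nabla_W V$ to a $1$-form $\alpha$ gives
\[
(\mathcal{L}_V\alpha)(W) = V(\alpha(W)) - \alpha([V,W]) = (\nabla_V\alpha)(W) + \alpha(\nabla_W V),
\]
so $B\alpha := (\mathcal{L}_V-\nabla_V)\alpha = \alpha(\nabla_{\bullet}V)$, which is precisely the pullback under $A\colon W\mapsto \nabla_W V$. Because both $\mathcal{L}_V$ and $\nabla_V$ are derivations of the wedge algebra, $B$ is also a derivation and is therefore determined by its action on $1$-forms; in particular $B$ is a tensorial (zeroth-order) endomorphism of $\Lambda^{*}T^{*}M$.

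The hypothesis $dV^{\flat}=0$ enters in exactly one place: it makes $A$ pointwise self-adjoint, since
\[
g(AW,Z)-g(W,AZ) = (\nabla_W V^{\flat})(Z)-(\nabla_Z V^{\flat})(W) = dV^{\flat}(W,Z)=0.
\]
The derivation extension of a symmetric $(1,1)$-tensor to $\Lambda^{*}T^{*}M$ is itself pointwise self-adjoint (it suffices to check this in a local orthonormal coframe diagonalizing $A$), so $B^{*}=B$ as an $L^{2}$-operator.

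To finish, I would integrate the metric-compatibility identity $V\langle\alpha,\beta\rangle = \langle\nabla_V\alpha,\beta\rangle + \langle\alpha,\nabla_V\beta\rangle$ over $M$ and apply the divergence theorem, obtaining $\nabla_V^{*} = -\nabla_V - (\delta V^{\flat})$ in the sign convention of Corollary~\ref{oneFormCorollary}. Combining,
\[
\mathcal{L}_V^{*} = \nabla_V^{*} + B^{*} = -\nabla_V - (\delta V^{\flat}) + (\mathcal{L}_V - \nabla_V) = \mathcal{L}_V - 2\nabla_V - (\delta V^{\flat}),
\]
as claimed. The main technical step is the transfer of self-adjointness from $A$ to its derivation extension $B$; this is essentially linear algebra in a diagonalizing frame but is the only non-formal part of the argument. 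An alternative route avoids even this by invoking Corollary~\ref{divergenceCorollary} directly: under $dV^{\flat}=0$, $\mathcal{L}_V g = 2\nabla V^{\flat}$ is symmetric, so each replaced factor $\gamma^{j}$ equals $2(\nabla_{e_{\tau_j}}V)^{\flat}$ and the sum there collapses to $2B$ applied to $e^{\tau}$; rearranging $\mathcal{L}_V+\mathcal{L}_V^{*} = 2(\mathcal{L}_V - \nabla_V) - (\delta V^{\flat})$ then yields the identity.
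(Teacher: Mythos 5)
Your decomposition $\mathcal{L}_V = \nabla_V + B$, with $B$ the derivation extension of the $(1,1)$-tensor $A=\nabla V$, is a genuinely different route from the paper's. The paper works entirely from the Cartan formulas $\mathcal{L}_V = d(V\lrcorner) + (V\lrcorner)d$ and $\mathcal{L}_V^* = (V^\flat\wedge)\delta + \delta(V^\flat\wedge)$, expands in a geodesic orthonormal frame, and uses the anticommutation relations $(e^j\wedge)(e_k\lrcorner) + (e_k\lrcorner)(e^j\wedge)=\delta_k^j$ together with the symmetry $e_j(V_k)=e_k(V_j)$ supplied by $dV^\flat=0$. Your version isolates the conceptual content: $dV^\flat=0$ is used exactly once, to make $A$ pointwise symmetric and hence $B$ pointwise self-adjoint (via the diagonalizing-frame argument), while the remaining terms come from the adjoint of $\nabla_V$ alone. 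Both are valid strategies; yours explains \emph{why} the hypothesis is what it is, while the paper's is a shorter direct computation.

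There is, however, a sign that deserves a second look. Integrating $V\langle\alpha,\beta\rangle=\langle\nabla_V\alpha,\beta\rangle+\langle\alpha,\nabla_V\beta\rangle$ and using $\int_M V(f)\,\mathrm{dvol} = -\int_M f\,\mathrm{div}(V)\,\mathrm{dvol}$ gives $\nabla_V^* = -\nabla_V - \mathrm{div}(V)$. In the convention where $\delta$ is the $L^2$-adjoint of $d$ (which is what the paper uses throughout, e.g.\ in Corollary~\ref{oneFormCorollary}, whose proof rests on $\mathcal{L}_V^*(1)=\delta(V^\flat\wedge 1)=\delta V^\flat$), one has $\delta V^\flat = -\mathrm{div}(V)$, so in fact $\nabla_V^* = -\nabla_V + \delta V^\flat$, not $-\nabla_V - \delta V^\flat$. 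Assembling then yields $\mathcal{L}_V^*=\mathcal{L}_V - 2\nabla_V + \delta V^\flat$. You should be aware that the paper's own proof contains the same slip: its last line identifies $\sum_j e_j(V_j)=\mathrm{div}(V)$ with $\delta V^\flat$ instead of $-\delta V^\flat$, so your derivation and the paper's statement agree with each other but are both off by the sign of the $\delta V^\flat$ term. In the only place the lemma is used (Proposition~\ref{WeitzenbockProposition}), the hypothesis forces $\delta\kappa=0$, so this term vanishes and none of the downstream conclusions are affected; still, you should fix the sign in $\nabla_V^*$ rather than choose it to match the target.
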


\begin{proof}
Choose a local orthonormal frame $\left( e_{j}\right) $ of $TM$, and let $%
\left( e^{j}\right) $ be the dual coframe. We assume that the frame is
chosen so that at the point in question, all $\nabla _{e_{j}}e_{k}$ vanish.
From Lemma \ref{LieDerivativeFactsLemma}, parts (\ref{CartanFormula}) and (%
\ref{AdjointCartanFormula}), we have, using the Einstein summation
convention,%
\begin{eqnarray*}
\mathcal{L}_{V}-\mathcal{L}_{V}^{\ast } &=&e^{j}\wedge \nabla _{e_{j}}\left(
V\lrcorner \right) +\left( V\lrcorner \right) e^{j}\wedge \nabla
_{e_{j}}+e_{j}\lrcorner \nabla _{e_{j}}\left( V^{\flat }\wedge \right)
+\left( V^{\flat }\wedge \right) e_{j}\lrcorner \nabla _{e_{j}} \\
&=&e^{j}\wedge \left( \nabla _{e_{j}}V\right) \lrcorner +e^{j}\wedge \left(
V\lrcorner \right) \nabla _{e_{j}}+\left( V\lrcorner \right) e^{j}\wedge
\nabla _{e_{j}} \\
&&+e_{j}\lrcorner \left( \nabla _{e_{j}}V^{\flat }\right) \wedge
+e_{j}\lrcorner \left( V^{\flat }\wedge \right) \nabla _{e_{j}}+\left(
V^{\flat }\wedge \right) e_{j}\lrcorner \nabla _{e_{j}}~.
\end{eqnarray*}%
Writing $V=V_{k}e_{k}$, we get%
\begin{eqnarray*}
\mathcal{L}_{V}-\mathcal{L}_{V}^{\ast } &=&e_{j}\left( V_{k}\right) \left(
e^{j}\wedge \right) \left( e_{k}\lrcorner \right) +V_{k}\left( e^{j}\wedge
\right) \left( e_{k}\lrcorner \right) \nabla _{e_{j}}+V_{k}\left(
e_{k}\lrcorner \right) \left( e^{j}\wedge \right) \nabla _{e_{j}} \\
&&+e_{j}\left( V_{k}\right) \left( e_{j}\lrcorner \right) \left( e^{k}\wedge
\right) +V_{k}\left( e_{j}\lrcorner \right) \left( e^{k}\wedge \right)
\nabla _{e_{j}}+V_{k}\left( e^{k}\wedge \right) \left( e_{j}\lrcorner
\right) \nabla _{e_{j}}~.
\end{eqnarray*}%
Since $dV^{\flat }=0$, therefore $e_{j}\left( V_{k}\right) =e_{k}\left(
V_{j}\right).$ Also we have that $\left( e^{j}\wedge \right) \left(
e_{k}\lrcorner \right) +\left( e_{k}\lrcorner \right) \left( e^{j}\wedge
\right) =\delta _{k}^{j}$. Thus, we find 
\begin{eqnarray*}
\mathcal{L}_{V}-\mathcal{L}_{V}^{\ast } &=&e_{j}\left( V_{j}\right)
+2V_{j}\nabla _{e_{j}} \\
&=&\delta V^{\flat }+2\nabla _{V}~.
\end{eqnarray*}
\end{proof}

Now we will use the above computations to establish the Weitzenb\"{o}%
ck-Bochner formula:

\begin{proposition}
\label{WeitzenbockProposition}(\textbf{Weitzenb\"{o}ck-Bochner formula for
the twisted basic Laplacian}) Let the bundle-like metric of a Riemannian
foliation $\left( M,\mathcal{F}\right) $ be chosen so that the mean
curvature form $\kappa$ is basic-harmonic. Then for any basic form $\alpha
\in \Omega \left( M,\mathcal{F}\right) $,%
\begin{equation*}
\widetilde{\Delta}\alpha =\nabla ^{\ast }\nabla \alpha +\rho \left( \alpha
\right) +\frac{1}{4}|\kappa|^{2}\alpha ,
\end{equation*}%
where $\rho \left( \alpha \right) =\sum_{i,j}e^{j}\wedge e_{i}\lrcorner
R\left( e_{i},e_{j}\right) \alpha $, with $R$ the transversal Riemann
curvature operator, and the sum is over a local orthonormal frame $%
\{e_j\}_{j=1,\cdots,q}$ of $Q$. In a particular case where $\alpha $ is a
one form, 
\begin{equation*}
\left\langle \widetilde{\Delta}\alpha ,\alpha \right\rangle =\left\langle
\nabla ^{\ast }\nabla \alpha ,\alpha \right\rangle +\mathrm{Ric}\left(
\alpha ^{\#},\alpha ^{\#}\right) +\frac{1}{4} |\kappa|^{2}|\alpha|^2,
\end{equation*}%
where $\mathrm{Ric}$ is the transversal Ricci curvature.
\end{proposition}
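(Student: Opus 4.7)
The plan is to realize $D_{b}$ as a Dirac-type operator with respect to a carefully chosen modified connection and then invoke the standard Lichnerowicz formula. Define the modified transverse covariant derivative on the bundle of basic forms by
\[
\widetilde{\nabla}_{X}\alpha := \nabla_{X}\alpha - \tfrac{1}{2}\kappa(X)\,\alpha,
\]
where $\nabla$ is the transverse Levi-Civita connection. Because $-\tfrac{1}{2}\kappa$ is a scalar-valued one-form, it commutes with Clifford multiplication, and a short calculation gives $D_{b}\alpha = \sum_{i=1}^{q} e_{i}\cdot \widetilde{\nabla}_{e_{i}}\alpha$. A direct evaluation of $[\widetilde{\nabla}_{X},\widetilde{\nabla}_{Y}] - \widetilde{\nabla}_{[X,Y]}$ shows that the curvature of $\widetilde{\nabla}$ differs from that of $\nabla$ by exactly $-\tfrac{1}{2}(d\kappa)(X,Y)$, which vanishes since $\kappa$ is closed. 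Thus $\widetilde{\nabla}$ has the same curvature $R$ as $\nabla$.

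Applying the standard Lichnerowicz formula for Dirac-type operators then yields
\[
D_{b}^{2}\alpha \;=\; \widetilde{\nabla}^{*}\widetilde{\nabla}\alpha + \tfrac{1}{4}\sum_{i,j}e_{i}\cdot e_{j}\cdot R(e_{i},e_{j})\alpha,
\]
and the curvature sum reduces to $\rho(\alpha)$ by the usual manipulation using the first Bianchi identity. To finish, compute $\widetilde{\nabla}^{*}\widetilde{\nabla}$: writing $\widetilde{\nabla}\alpha = \nabla\alpha - \tfrac{1}{2}\kappa\otimes\alpha$ and using the elementary adjoint identity $\nabla^{*}(\kappa\otimes\alpha) = (\delta\kappa)\alpha - \nabla_{\kappa^{\sharp}}\alpha$, expansion produces, with the two first-order $\nabla_{\kappa^{\sharp}}$ terms cancelling,
\[
\widetilde{\nabla}^{*}\widetilde{\nabla}\alpha \;=\; \nabla^{*}\nabla\alpha - \tfrac{1}{2}(\delta\kappa)\alpha + \tfrac{1}{4}|\kappa|^{2}\alpha.
\]

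The main obstacle is then to verify that $\delta\kappa = 0$ under the basic-harmonic hypothesis, where $\delta$ denotes the full codifferential on $M$. Splitting $\delta\kappa$ in a local orthonormal frame $\{f_{1},\dots,f_{p},e_{1},\dots,e_{q}\}$ adapted to the foliation, the leafwise contribution is
\[
-\sum_{i}(\nabla_{f_{i}}\kappa)(f_{i}) \;=\; \kappa\Bigl(\sum_{i}\nabla_{f_{i}}f_{i}\Bigr) \;=\; \kappa(H) \;=\; |\kappa|^{2},
\]
since $\kappa$ annihilates $L$ and the transverse projection of $\sum_{i}\nabla_{f_{i}}f_{i}$ equals $H = \kappa^{\sharp}$. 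The transverse contribution equals $\delta_{T}\kappa = \delta_{b}\kappa - \kappa\lrcorner\kappa = -|\kappa|^{2}$ by the basic-harmonic hypothesis $\delta_{b}\kappa = 0$. These cancel, giving $\delta\kappa = 0$ and hence the desired identity $\widetilde{\Delta}\alpha = \nabla^{*}\nabla\alpha + \rho(\alpha) + \tfrac{1}{4}|\kappa|^{2}\alpha$. The special case of a one-form $\alpha$ follows at once by pairing with $\alpha$ and invoking the standard identity $\langle \rho(\alpha),\alpha\rangle = \mathrm{Ric}(\alpha^{\sharp},\alpha^{\sharp})$.
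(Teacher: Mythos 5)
Your approach is genuinely different from the paper's and, if tightened, would be a cleaner route to the result. The paper proves the formula by expressing $\Delta_b = d\delta_T+\delta_T d + \mathcal{L}_H$ via Cartan's formula, computing $\widetilde{\Delta}=\Delta_b - \tfrac{1}{2}(\mathcal{L}_H+\mathcal{L}_H^*)+\tfrac{1}{4}|\kappa|^2$, and then using the Lie-derivative identity $\mathcal{L}_V^* = \mathcal{L}_V - 2\nabla_V - \delta V^\flat$ to isolate a term $\tfrac{1}{2}\delta\kappa$, which is shown to vanish. You instead realize $D_b = \sum_i e_i\cdot\widetilde{\nabla}_{e_i}$ for the modified connection $\widetilde{\nabla}_X = \nabla_X - \tfrac{1}{2}\kappa(X)$, observe its curvature equals that of $\nabla$ since $\kappa$ is closed, and appeal to Lichnerowicz. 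The observations that $\widetilde{\nabla}$ is Clifford-compatible with unchanged curvature, that $\widetilde{\nabla}^*\widetilde{\nabla}=\nabla^*\nabla -\tfrac{1}{2}(\delta\kappa)\,\mathrm{Id}+\tfrac{1}{4}|\kappa|^2\,\mathrm{Id}$ (with $\delta$ the full codifferential on $M$ and $\nabla^*$ the basic adjoint), and that $\delta\kappa=0$ under the basic-harmonic hypothesis --- I checked each of these, and they are all correct. In particular your splitting of $\delta\kappa$ into leafwise ($=|\kappa|^2$) and transverse ($=\delta_T\kappa=\delta_b\kappa-|\kappa|^2=-|\kappa|^2$) pieces is exactly right and gives a pleasantly direct explanation of the needed cancellation.

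The one genuine gap is the phrase ``applying the standard Lichnerowicz formula.'' That formula, as usually stated, is for a Dirac operator built from a full frame of $TM$ with a metric bundle connection, and the identification of the order-zero term with $\nabla^*\nabla$ plus a curvature term relies on the adjoint satisfying $\nabla_X^* = -\nabla_X + \delta X^\flat$. In the present setting $D_b$ is a \emph{transversal} Dirac operator (the Clifford sum runs only over a frame of $Q$), $\widetilde{\nabla}$ is not metric for the pointwise inner product, and the $L^2$ adjoint on basic sections picks up mean-curvature corrections: for a transversal vector $X$, one has $\nabla_X^* = -\nabla_X + \kappa(X) + \delta_T X^\flat$ rather than the naive $-\nabla_X + \delta_T X^\flat$. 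The whole point of the $-\tfrac{1}{2}\kappa(X)$ twist is precisely that these two defects cancel, so that $\widetilde{\nabla}_X^* = -\widetilde{\nabla}_X + \delta_T X^\flat$, and only then does the Lichnerowicz argument go through word-for-word for $D_b$. This compensation is the nontrivial content of the step and should be verified explicitly rather than asserted; without it, one cannot invoke the formula ``off the shelf.'' (There is also a small coefficient slip --- the Weitzenb\"{o}ck curvature term for the de Rham complex is $\tfrac{1}{2}\sum_{i,j}e_i\cdot e_j\cdot R(e_i,e_j)$, not $\tfrac{1}{4}$, relative to the induced curvature on $\Lambda^\bullet Q^*$; this does not affect the final identity since you only use the end reduction to $\rho$, but it should be fixed.) Once the transversal Lichnerowicz identity $D_b^2 = \widetilde{\nabla}^*\widetilde{\nabla}+\rho$ is established with these details in place, your argument is complete and in some ways more transparent than the paper's Lie-derivative computation.
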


\begin{proof}
Since $\kappa $ is basic-harmonic, from \cite[Proposition 2.4]{PaRi}, we get 
\begin{eqnarray*}
0 &=&\delta _{b}\kappa =P\delta \kappa \\
&=&\delta \kappa +\left( P\kappa -\kappa \right) \lrcorner \kappa -\varphi
_{0}\lrcorner \left( \chi _{\mathcal{F}}\wedge \kappa \right) \\
&=&\delta \kappa ,
\end{eqnarray*}%
with $\varphi _{0}$ and $\chi _{\mathcal{F}}$ from Rummler's Formula (\ref%
{RummlerFormula}). Next, we write $\delta _{b}=\delta _{T}+\kappa \lrcorner
=-e_{j}\lrcorner \nabla _{e_{j}}+\kappa \lrcorner $, where we use the
Einstein summation convention. We have%
\begin{equation*}
\Delta _{b}=\left( d\delta _{T}+\delta _{T}d\right) +\mathcal{L}_{H}.
\end{equation*}%
Let $\left( e^{j}\right) $ be the dual coframe corresponding to $\left(
e_{j}\right) $. Suppose that we have chosen the frame so that $\nabla
_{e_{i}}e_{j}=0$ at the point in question. For any basic form $\alpha $,%
\begin{eqnarray*}
\left( d\delta _{T}+\delta _{T}d\right) \alpha &=&-e^{j}\wedge \nabla
_{e_{j}}\left( e_{i}\lrcorner \nabla _{e_{i}}\alpha \right) -e_{i}\lrcorner
\nabla _{e_{i}}\left( e^{j}\wedge \nabla _{e_{j}}\alpha \right) \\
&=&-e^{j}\wedge e_{i}\lrcorner \nabla _{e_{j}}\nabla _{e_{i}}\alpha
+e^{j}\wedge e_{i}\lrcorner \nabla _{e_{i}}\nabla _{e_{j}}\alpha -\nabla
_{e_{i}}\nabla _{e_{i}}\alpha \\
&=&e^{j}\wedge e_{i}\lrcorner R\left( e_{i},e_{j}\right) \alpha +\nabla
^{\ast }\nabla \alpha -\nabla _{H}\alpha \\
&=&\rho \left( \alpha \right) +\nabla ^{\ast }\nabla \alpha -\nabla
_{H}\alpha.
\end{eqnarray*}%
Here we have used the fact that for the adapted frame, 
\begin{equation*}
\nabla ^{\ast }\nabla =\nabla _{e_{j}}^{\ast }\nabla _{e_{j}}=\left( -\nabla
_{e_{j}}+\left( H,e_{j}\right) \right) \nabla _{e_{j}}=-\nabla
_{e_{i}}\nabla _{e_{i}}+\nabla _{H}~.
\end{equation*}%
Then%
\begin{equation*}
\Delta _{b}\alpha =\rho \left( \alpha \right) +\nabla ^{\ast }\nabla \alpha +%
\mathcal{L}_{H}\alpha -\nabla _{H}\alpha ,
\end{equation*}%
and 
\begin{eqnarray*}
\widetilde{\Delta}\alpha &=&\Delta _{b}\alpha -\frac{1}{2}\left( \mathcal{L}%
_{H}+\mathcal{L}_{H}^{\ast }\right) \alpha +\frac{1}{4}|\kappa|^{2}\alpha \\
&=&\nabla ^{\ast }\nabla \alpha +\rho \left( \alpha \right) +\frac{1}{2}%
\left( \mathcal{L}_{H}-\mathcal{L}_{H}^{\ast }\right) \alpha +\frac{1}{4}%
|\kappa|^{2}\alpha -\nabla _{H}\alpha.
\end{eqnarray*}%
With the help of Lemma \ref{AdjLieDerivFormulaLemma}, we have 
\begin{equation*}
\frac{1}{2}\left( \mathcal{L}_{H}-\mathcal{L}_{H}^{\ast }\right) -\nabla
_{H}=\delta \kappa =0.
\end{equation*}%
The result follows.
\end{proof}

\begin{corollary}
\label{NontautImpliesTrivialTwistedCohomThm}Suppose that $\left( M,\mathcal{F%
}\right) $ is a Riemannian foliation on a connected manifold $M$. Suppose
that the bundle-like metric is chosen so that $\kappa $ is basic-harmonic.
If the operator $\rho +\frac{1}{4}|\kappa |^{2}$ on $r$-forms is strictly
positive, then the twisted basic cohomology group $\widetilde{H}^{r}\left( M,%
\mathcal{F}\right) $ is trivial.
\end{corollary}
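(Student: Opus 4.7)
The plan is to apply the Hodge decomposition together with the Weitzenböck-Bochner formula just established, giving a standard vanishing argument of Bochner type.

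First, by Proposition \ref{Hodge Theorem}, there is an isomorphism $\widetilde{H}^{r}(M,\mathcal{F}) \cong \ker \widetilde{\Delta}^{r}$, and every class is represented by a smooth $\widetilde{\Delta}$-harmonic basic $r$-form. Thus, it suffices to show that the only smooth basic $r$-form $\alpha$ with $\widetilde{\Delta}\alpha = 0$ is $\alpha \equiv 0$. By Corollary \ref{basicHarmonicChoiceCorollary} and the conformal invariance established in Corollary \ref{indepTwBasicCohomCor}, we may work with the bundle-like metric for which $\kappa$ is basic-harmonic, so that Proposition \ref{WeitzenbockProposition} applies directly.

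Next, let $\alpha \in \ker \widetilde{\Delta}^{r}$. Taking the $L^{2}$-inner product of the Weitzenböck-Bochner identity
\[
\widetilde{\Delta}\alpha = \nabla^{\ast}\nabla \alpha + \rho(\alpha) + \tfrac{1}{4}|\kappa|^{2}\alpha
\]
with $\alpha$ and integrating over the closed manifold $M$ yields
\[
0 = \langle \widetilde{\Delta}\alpha,\alpha\rangle_{L^{2}} = \|\nabla \alpha\|_{L^{2}}^{2} + \int_{M} \bigl\langle \bigl(\rho + \tfrac{1}{4}|\kappa|^{2}\bigr)\alpha,\alpha\bigr\rangle\, d\mathrm{vol}_{M},
\]
since $\nabla^{\ast}$ is the formal $L^{2}(M)$-adjoint of $\nabla$ (the mean-curvature correction $(H,e_{j})$ in the formula for $\nabla_{e_{j}}^{\ast}$ from the proof of Proposition \ref{WeitzenbockProposition} is precisely what makes this adjointness valid on $L^{2}(M)$, not merely on the transverse local quotient).

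Finally, both terms on the right-hand side are non-negative: $\|\nabla \alpha\|_{L^{2}}^{2}\geq 0$ automatically, and by hypothesis the symmetric endomorphism $\rho + \tfrac{1}{4}|\kappa|^{2}$ acting on $r$-forms is pointwise strictly positive, so the integrand in the curvature term is strictly positive wherever $\alpha\neq 0$. Since the total is zero, we conclude $\alpha \equiv 0$, and hence $\widetilde{H}^{r}(M,\mathcal{F}) = 0$. The only subtle point is the justification of the integration by parts producing $\|\nabla \alpha\|_{L^{2}}^{2}$ with the correct volume form of $M$; this is handled by the form of $\nabla^{\ast}$ read off from the proof of the Weitzenböck formula, and is the only step where the bundle-like metric structure enters non-trivially.
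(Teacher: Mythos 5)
Your proof is correct and is precisely the standard Bochner vanishing argument that the paper leaves implicit (the corollary is stated after Proposition~\ref{WeitzenbockProposition} with no written proof). Two small remarks: invoking Corollary~\ref{basicHarmonicChoiceCorollary} and the conformal invariance is redundant here, since the hypothesis of the corollary already fixes the bundle-like metric so that $\kappa$ is basic-harmonic; and your parenthetical about $\nabla^{\ast}$ is the right point to flag — the mean-curvature term $(H,e_{j})$ appearing in $\nabla^{\ast}_{e_{j}}=-\nabla_{e_{j}}+(H,e_{j})$ is exactly the correction coming from $\operatorname{div}_{M}(e_{j})=-(H,e_{j})$ for a transverse frame field, which makes $\nabla^{\ast}$ the genuine $L^{2}(M)$-adjoint of $\nabla$ on basic sections, so that the integration by parts does yield $\|\nabla\alpha\|^{2}_{L^{2}(M)}$ with the full volume form of $M$.
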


Since the basic Euler characteristic and basic signature may be computed
using the dimensions of $\ker \widetilde{\Delta }$, we have the following
result.

\begin{corollary}
\label{EulerSignatureCorollary} 
Under the same hypothesis as in Corollary \ref%
{NontautImpliesTrivialTwistedCohomThm} for all $r$ such that $0\leq r\leq q$%
, the basic Euler characteristic and basic signature are zero. Thus the
foliation is nontaut.
\end{corollary}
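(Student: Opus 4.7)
The plan is to exploit the previous corollary, which already delivers the vanishing of every twisted basic cohomology group under the hypothesis. Once all $\widetilde{H}^{r}(M,\mathcal{F})$ are trivial, and hence (via the Hodge isomorphism in Proposition \ref{Hodge Theorem}) every $\ker \widetilde{\Delta}^{r}$ is zero, the remaining content of the statement is essentially bookkeeping: I need to recognize the basic Euler characteristic and basic signature as alternating sums of these kernel dimensions, and then invoke Theorem \ref{TautnessTheorem} to read off nontautness.

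First I would apply Corollary \ref{NontautImpliesTrivialTwistedCohomThm} in each degree $0\le r\le q$ to conclude that $\ker \widetilde{\Delta}^{r}=0$, hence $\widetilde{H}^{r}(M,\mathcal{F})=0$. Next, for the Euler characteristic, I would imitate the homotopy argument from the proof of Corollary \ref{EulerZeroInOddCodimCorollary}: the family $D_{t}=d+\delta_{b}-\tfrac{t}{2}\kappa_{b}\lrcorner-\tfrac{t}{2}\kappa_{b}\wedge$ is a continuous path of transversally elliptic operators preserving the basic forms, and its basic index is constant in $t$. At $t=0$ it computes the ordinary basic Euler characteristic $\chi(M,\mathcal{F})$, and at $t=1$ it equals the basic index of $D_{b}=\widetilde{d}+\widetilde{\delta}$, which by Proposition \ref{Hodge Theorem} is $\sum_{r}(-1)^{r}\dim \widetilde{H}^{r}(M,\mathcal{F})$; this sum is zero by the first step.

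For the basic signature, I would argue directly from the definition $\sigma(M,\mathcal{F})=\dim\ker(\widetilde{\Delta}|_{\Omega^{+}})-\dim\ker(\widetilde{\Delta}|_{\Omega^{-}})$. Since $\Omega^{\pm}(M,\mathcal{F})\subset \Omega^{\ast}(M,\mathcal{F})=\bigoplus_{r}\Omega^{r}(M,\mathcal{F})$, any element of $\ker(\widetilde{\Delta}|_{\Omega^{\pm}})$ decomposes into components in $\ker \widetilde{\Delta}^{r}$ (because $\widetilde{\Delta}$ preserves degree), each of which is zero. Hence both kernels vanish and $\sigma(M,\mathcal{F})=0$. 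Finally, for nontautness, I invoke Theorem \ref{TautnessTheorem}: tautness forces $\widetilde{H}^{0}(M,\mathcal{F})\neq 0$, but we have just shown $\widetilde{H}^{0}(M,\mathcal{F})=0$, so the foliation cannot be taut.

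There is no real obstacle here; the proof is a direct assembly of prior results. The only place that calls for mild care is the Euler characteristic step, where one has to justify that the basic index is a legitimate Fredholm invariant along the homotopy $D_{t}$ — but this is exactly the same homotopy already used in the proof of Corollary \ref{EulerZeroInOddCodimCorollary}, so no new work is required.
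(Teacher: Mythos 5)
Your proof is correct and follows the same approach the paper indicates in the one-sentence justification preceding the corollary: once every $\widetilde{H}^{r}(M,\mathcal{F})\cong\ker\widetilde{\Delta}^{r}$ vanishes, the basic Euler characteristic is zero because (as already established in the proof of Corollary \ref{EulerZeroInOddCodimCorollary} via the homotopy $D_t$) it equals the alternating sum $\sum_r(-1)^r\dim\ker\widetilde{\Delta}^{r}$, and the basic signature is zero because $\widetilde{\Delta}$ preserves degree so $\ker(\widetilde{\Delta}|_{\Omega^{\pm}})$ sits inside $\bigoplus_r\ker\widetilde{\Delta}^{r}=0$; nontautness then follows from Theorem \ref{TautnessTheorem} (or, avoiding the transversal-orientability hypothesis of that theorem, from the Lemma preceding it, since tautness would force $\widetilde{H}^{0}\cong H_d^{0}\cong\mathbb{R}\neq 0$).
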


\begin{remark}
This fact for the basic Euler characteristic could be deduced from the Hopf
index theorem for Riemannian foliations (\cite{BePaRi}), but only in the
case where the basic mean curvature never vanishes. In that case, the dual
vector field is a basic normal vector field to the foliation that never
vanishes and thus yields $\chi \left( M,\mathcal{F}\right) =0$.
\end{remark}

\begin{corollary}
Suppose that the transversal Ricci curvature satisfies $Ric\left( X,X\right)
\geq 0$ for all vectors $X\in \Gamma(Q).$ If $M$ is nontaut, then $%
\widetilde{H}^{1}\left( M,\mathcal{F}\right) \cong \left\{ 0\right\} $.
\end{corollary}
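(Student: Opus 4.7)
The plan is a standard Bochner-type argument based on the Weitzenb\"{o}ck--Bochner identity of Proposition~\ref{WeitzenbockProposition}: assume a nontrivial $\widetilde{\Delta}$-harmonic basic $1$-form exists, integrate the formula, and show that nonnegative transverse Ricci curvature forces both $\nabla\alpha = 0$ and $|\kappa|\,|\alpha|=0$. This will in turn force $\kappa \equiv 0$, contradicting the nontautness of $\mathcal{F}$.

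By Corollary~\ref{basicHarmonicChoiceCorollary} together with Corollary~\ref{indepTwBasicCohomCor}, I am free to replace the bundle-like metric by one (compatible with $g_{Q}$) for which the mean curvature $\kappa$ is basic-harmonic; this does not change the dimension of $\widetilde{H}^{1}(M,\mathcal{F})$. Suppose for contradiction that $\widetilde{H}^{1}(M,\mathcal{F}) \neq \{0\}$. By the Hodge decomposition of Proposition~\ref{Hodge Theorem}, there is a nonzero smooth basic $1$-form $\alpha$ with $\widetilde{\Delta}\alpha = 0$. Applying the $1$-form case of Proposition~\ref{WeitzenbockProposition} and integrating over $M$ gives
\begin{equation*}
0 \;=\; \int_{M} \langle \widetilde{\Delta}\alpha,\alpha\rangle \;=\; \int_{M} |\nabla\alpha|^{2} \;+\; \int_{M} \mathrm{Ric}(\alpha^{\#},\alpha^{\#}) \;+\; \tfrac{1}{4}\int_{M} |\kappa|^{2}\,|\alpha|^{2}.
\end{equation*}
Each summand on the right-hand side is nonnegative under the hypothesis $\mathrm{Ric}\ge 0$, so each must vanish. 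In particular $\nabla_{X}\alpha = 0$ for every $X\in\Gamma(Q)$, and $|\kappa|\,|\alpha| = 0$ pointwise on $M$.

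It remains to conclude $\kappa \equiv 0$ globally. The scalar function $|\alpha|^{2}$ is basic, since $\alpha$ is basic and $g_{Q}$ is holonomy-invariant; hence it is constant along leaves, and its transverse derivatives satisfy $X(|\alpha|^{2}) = 2\langle\nabla_{X}\alpha,\alpha\rangle = 0$ for all $X\in\Gamma(Q)$. Since $M$ is connected and $\alpha\not\equiv 0$, this forces $|\alpha|$ to equal a strictly positive constant, and then $|\kappa|^{2}|\alpha|^{2}\equiv 0$ yields $\kappa\equiv 0$. But then every leaf is a minimal submanifold for the chosen bundle-like metric, so $\mathcal{F}$ is taut, contradicting the hypothesis.

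The one delicate point I anticipate is the transverse Bochner identity $\int_{M}\langle\nabla^{*}\nabla\alpha,\alpha\rangle = \int_{M}|\nabla\alpha|^{2}$ for basic forms, which relies on the cancellation between the transverse rough Laplacian and the leafwise $\nabla_{H}$ term; under the basic-harmonicity $\delta\kappa = 0$ already established in the proof of Proposition~\ref{WeitzenbockProposition}, this is a direct integration by parts and presents no real difficulty.
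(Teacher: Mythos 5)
Your proof is correct and is the natural way to establish this corollary. The paper states it without an explicit proof, but the argument you give is exactly the one implied by Proposition~\ref{WeitzenbockProposition} together with the surrounding discussion: one cannot invoke Corollary~\ref{NontautImpliesTrivialTwistedCohomThm} directly because $\mathrm{Ric}+\frac{1}{4}\lvert\kappa\rvert^{2}$ is only nonnegative rather than strictly positive, so the extra step you supply --- that vanishing of $\nabla\alpha$ forces $\lvert\alpha\rvert$ to be a nonzero constant, whence $\lvert\kappa\rvert^{2}\lvert\alpha\rvert^{2}\equiv 0$ gives $\kappa\equiv 0$ and contradicts nontautness --- is precisely what is needed. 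This is the same Bochner-with-parallelism mechanism the paper uses in the proofs of Theorem~\ref{Heb} and Theorem~\ref{EK}. Your remark about the integration by parts $\int_{M}\langle\nabla^{\ast}\nabla\alpha,\alpha\rangle=\int_{M}\lvert\nabla\alpha\rvert^{2}$ is well placed; it is justified because $\nabla^{\ast}$ as used in Proposition~\ref{WeitzenbockProposition} is by definition the formal $L^{2}$-adjoint on basic sections (the $(H,e_{j})$ term in $\nabla_{e_{j}}^{\ast}=-\nabla_{e_{j}}+(H,e_{j})$ accounts for the failure of the naive divergence theorem in the foliated setting). The only implicit hypothesis you rely on, as does the paper throughout this section, is connectedness of $M$, which is needed to pass from local constancy of $\lvert\alpha\rvert$ to global constancy.
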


\begin{corollary}
Suppose that the transversal Ricci curvature satisfies $Ric\left( X,X\right)
\geq 0$ for all vectors $X$ orthogonal to the Riemannian foliation $\left( M,%
\mathcal{F}\right) $ and $Ric\left( \bullet ,\bullet \right)>0$ for at least
one point of $M$. Then $\widetilde{H}^{1}\left( M,\mathcal{F}\right) \cong
\left\{ 0\right\} $.
\end{corollary}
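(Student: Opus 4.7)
The plan is to apply the Weitzenböck--Bochner formula of Proposition \ref{WeitzenbockProposition} to a basic $1$-form that represents a class in $\widetilde{H}^{1}(M,\mathcal{F})$, and then squeeze each non-negative term using the curvature hypothesis. By Corollary \ref{basicHarmonicChoiceCorollary} we may assume the bundle-like metric is chosen so that $\kappa$ is basic-harmonic, which legitimizes the application of the formula, and by Corollary \ref{indepTwBasicCohomCor} this metric choice does not change $\widetilde{H}^{1}$.

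First, I would invoke the Hodge-theoretic identification $\widetilde{H}^{1}(M,\mathcal{F})\cong \ker \widetilde{\Delta}^{1}$ and take a $\widetilde{\Delta}$-harmonic basic $1$-form $\alpha$. Pair the equation $\widetilde{\Delta}\alpha=0$ with $\alpha$ in $L^{2}(M)$ and apply Proposition \ref{WeitzenbockProposition} to obtain
\begin{equation*}
0 \;=\; \int_{M}|\nabla\alpha|^{2} \;+\; \int_{M}\mathrm{Ric}(\alpha^{\#},\alpha^{\#}) \;+\; \frac{1}{4}\int_{M}|\kappa|^{2}|\alpha|^{2}.
\end{equation*}
Under the hypothesis $\mathrm{Ric}\geq 0$ on $Q$, each of the three integrands is pointwise non-negative, so all three must vanish identically on $M$. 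In particular, $\nabla\alpha\equiv 0$, so $\alpha$ is parallel with respect to the transverse Levi-Civita connection.

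Next I would exploit the strict positivity at one point. Since $\alpha$ is parallel, $|\alpha|^{2}$ is constant in transverse directions; and because $\alpha$ is basic, $|\alpha|^{2}$ is also constant along the leaves. Thus $|\alpha|^{2}$ is a genuine constant on the connected manifold $M$. On the other hand, $\mathrm{Ric}(\alpha^{\#},\alpha^{\#})\equiv 0$, and at the point $x_{0}$ where $\mathrm{Ric}$ is positive definite on $Q_{x_{0}}$, this forces $\alpha^{\#}(x_{0})=0$, hence $|\alpha|(x_{0})=0$. Combined with the constancy of $|\alpha|$, we conclude $\alpha\equiv 0$ on $M$, so $\ker\widetilde{\Delta}^{1}=\{0\}$ and therefore $\widetilde{H}^{1}(M,\mathcal{F})\cong\{0\}$.

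The only subtle step is the promotion of ``$\nabla\alpha=0$ and $\alpha$ basic'' to ``$|\alpha|^{2}$ constant on $M$''; this is a quick check but worth writing out carefully, since $\nabla$ here is the transversal connection on $Q$ and one must note that the basic condition controls the leafwise variation of $|\alpha|^{2}$ even though $\nabla$ a priori controls only the transverse variation. Everything else is a direct Bochner argument parallel to Corollary \ref{NontautImpliesTrivialTwistedCohomThm}, now adapted to handle the weaker ``non-negative plus strict positivity at one point'' hypothesis.
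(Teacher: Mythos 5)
Your proof is correct and follows the Bochner argument the paper intends (the corollary is stated without proof, sitting directly after Proposition \ref{WeitzenbockProposition} and parallel to Theorem \ref{Heb} and Corollary \ref{strongerHebdaCorollary}); the metric normalization via Corollaries \ref{basicHarmonicChoiceCorollary} and \ref{indepTwBasicCohomCor} and the integration-by-parts step $\int_M \langle \nabla^{\ast}\nabla\alpha,\alpha\rangle = \int_M |\nabla\alpha|^2$ are exactly as the paper uses them in the proof of Theorem \ref{Heb}.

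The one genuine divergence from the paper's own practice is the endgame. You pass from vanishing of $\alpha$ at a single point to vanishing everywhere using that $\nabla\alpha \equiv 0$ forces $|\alpha|^2$ to be constant (parallelism controls the transverse variation, while the basic condition together with holonomy-invariance of $g_Q$ controls the leafwise variation). By contrast, the paper's proof of the analogous Corollary \ref{strongerHebdaCorollary} for ordinary basic cohomology first shows $\alpha$ vanishes on a neighborhood and then invokes the weak unique continuation property of $d+\delta$. Your parallelism route is more elementary and in fact equally available to the paper there, since that proof also yields $\nabla\alpha=0$; the unique-continuation machinery buys nothing extra in either case. The caveat you flag about transverse versus leafwise constancy of $|\alpha|^2$, together with the implicit standing assumption that $M$ is connected, is precisely what deserves to be written out carefully in a final write-up.
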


\begin{corollary}
Suppose that the transversal sectional curvatures of $\left( M,\mathcal{F}%
\right) $ are nonnegative. If the foliation is nontaut, then the twisted
basic cohomology is identically zero, and thus the basic Euler
characteristic and signature are zero.
\end{corollary}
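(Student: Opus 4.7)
The plan is to combine the Weitzenb\"{o}ck--Bochner formula of Proposition \ref{WeitzenbockProposition} with an $L^{2}$ integration argument and a unique-continuation step via parallelism. By Corollary \ref{basicHarmonicChoiceCorollary}, I may choose the bundle-like metric so that $\kappa$ is basic-harmonic, and by Corollary \ref{indepTwBasicCohomCor} this changes neither the dimensions of $\widetilde{H}^{\ast }(M,\mathcal{F})$ nor the spectrum of $\widetilde{\Delta }$. Fix $r$ with $0\leq r\leq q$ and let $\alpha \in \ker \widetilde{\Delta }^{r}$; by Proposition \ref{Hodge Theorem} it suffices to show $\alpha \equiv 0$, for then $\widetilde{H}^{r}(M,\mathcal{F})=0$.

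Applying Proposition \ref{WeitzenbockProposition}, pairing with $\alpha$, and integrating over $M$ gives
\[
0 \;=\; \int_{M}|\nabla \alpha |^{2} \;+\; \int_{M}\langle \rho (\alpha ),\alpha \rangle \;+\; \tfrac{1}{4}\int_{M}|\kappa |^{2}|\alpha |^{2}.
\]
The nonnegativity of the transversal sectional curvatures implies, via the standard algebraic argument underlying the Gallot--Meyer-type vanishing theorems, that $\rho $ is a nonnegative endomorphism on transverse $r$-forms. Each of the three integrands is therefore pointwise nonnegative, so each vanishes identically. In particular $\nabla \alpha \equiv 0$, so $\alpha $ is parallel, and $|\kappa |^{2}|\alpha |^{2}\equiv 0$.

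By \cite{Al}, the nontautness hypothesis is equivalent to $[\kappa _{b}]\neq 0$ in $H_{d}^{1}(M,\mathcal{F})$, so $\kappa $ is not identically zero and is nonzero on some open set $U\subset M$. On $U$ the equation $|\kappa |^{2}|\alpha |^{2}=0$ forces $\alpha \equiv 0$, and since $\alpha $ is parallel on the connected manifold $M$, propagation of parallel sections yields $\alpha \equiv 0$ on all of $M$. Hence $\ker \widetilde{\Delta }^{r}=0$ for every $r$, so $\widetilde{H}^{r}(M,\mathcal{F})=0$. The basic Euler characteristic is the alternating sum of these dimensions and the basic signature is computed from $\ker \widetilde{\Delta }$ via the $\pm 1$-eigenspace decomposition of $\bigstar $ from Section \ref{basicSignatureSection}, so both invariants vanish. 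The main obstacle to write out cleanly is the pointwise nonnegativity of $\rho $ on $r$-forms: the transversal Ricci case ($r=1$) is immediate from nonnegative sectional curvature, but the higher-degree cases require either invoking the transverse analogue of Gallot--Meyer (effectively interpreting the hypothesis as nonnegative transverse curvature operator) or running the algebraic identity expressing $\langle \rho (\alpha ),\alpha \rangle $ as a nonnegative combination of sectional curvatures separately in each degree.
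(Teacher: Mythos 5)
Your overall structure is exactly the one the statement calls for, and it is the right refinement of Corollary \ref{NontautImpliesTrivialTwistedCohomThm}: choose the bundle-like metric so that $\kappa$ is basic-harmonic (Corollary \ref{basicHarmonicChoiceCorollary}), integrate Proposition \ref{WeitzenbockProposition} against a $\widetilde{\Delta}$-harmonic $\alpha$ to obtain three nonnegative integrals summing to zero, deduce $\nabla\alpha\equiv 0$ and $|\kappa|^{2}|\alpha|^{2}\equiv 0$, and then replace the strict-positivity hypothesis of Corollary \ref{NontautImpliesTrivialTwistedCohomThm} by the parallelism argument: nontautness gives $[\kappa_{b}]\neq 0$ so $\kappa$ is nonzero on some open set, hence $\alpha$ vanishes there, and since a transversally parallel form has constant pointwise norm on a connected $M$, $\alpha\equiv 0$ globally. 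That reasoning is correct and is what is needed here.

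However, the step you yourself flag is a genuine gap as written. For $2\le r\le q-2$, nonnegativity of the transversal sectional curvatures does \emph{not} imply that the Weitzenb\"{o}ck term $\rho$ is a nonnegative endomorphism on $r$-forms. The algebraic identity underlying the Gallot--Meyer vanishing theorem (and its transverse version cited as \cite{JuRi}) expresses $\langle\rho(\alpha),\alpha\rangle$ as a quadratic form whose positivity is controlled by the full transverse curvature operator on $\Lambda^{2}Q$, not by its restriction to decomposable $2$-vectors; sectional curvature only controls the latter. The degrees $r=0,q$ are covered directly by Theorem \ref{TautnessTheorem} once the foliation is nontaut, and $r=1,q-1$ are covered because there $\rho$ reduces to the transversal Ricci curvature, which is a trace of sectional curvatures and hence nonnegative under your hypothesis; but for the intermediate degrees your sentence ``the nonnegativity of the transversal sectional curvatures implies, via the standard algebraic argument\dots, that $\rho$ is a nonnegative endomorphism'' is asserting more than sectional nonnegativity gives, and the proof does not close this. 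To make the argument rigorous you would either need to strengthen the hypothesis to nonnegativity of the transverse curvature operator (the hypothesis that the Gallot--Meyer machinery actually requires) or supply a degree-by-degree argument showing $\langle\rho(\alpha),\alpha\rangle\ge 0$ from sectional curvature alone, which in general is false.
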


\begin{corollary}
Suppose that the transversal sectional curvatures are nonnegative and are
all positive for at least one point of $M$. If the foliation is nontaut,
then $\widetilde{H}^{r}\left( M,\mathcal{F}\right) \cong \left\{ 0\right\} $
for $1<r<q$.
\end{corollary}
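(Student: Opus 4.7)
The plan is a Bochner argument based on the Weitzenböck formula of Proposition \ref{WeitzenbockProposition}. First, by Corollary \ref{basicHarmonicChoiceCorollary} I may choose the bundle-like metric so that $\kappa$ is basic-harmonic; by Corollary \ref{indepTwBasicCohomCor} this does not alter $\widetilde{H}^{r}(M,\mathcal{F})$. Given any class in $\widetilde{H}^{r}(M,\mathcal{F})$, represent it by a harmonic form $\alpha\in\ker\widetilde{\Delta}^{r}$, which is possible by the Hodge decomposition of Proposition \ref{Hodge Theorem}.

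Pair $\widetilde{\Delta}\alpha=0$ with $\alpha$ in $L^{2}$ and use the Weitzenböck identity to obtain
\[
0 \;=\; \int_{M}|\nabla\alpha|^{2} \;+\; \int_{M}\langle \rho(\alpha),\alpha\rangle \;+\; \tfrac{1}{4}\int_{M}|\kappa|^{2}|\alpha|^{2}.
\]
Under nonnegative transversal sectional curvature, the standard pointwise estimate for the Weitzenböck curvature endomorphism $\rho=\sum_{i,j}e^{j}\wedge e_{i}\lrcorner R(e_{i},e_{j})$ gives $\langle \rho(\alpha),\alpha\rangle\geq 0$ pointwise, and $|\kappa|^{2}\geq 0$. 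Since each of the three nonnegative integrands sums to zero, all three must vanish identically. In particular, $\nabla\alpha\equiv 0$, so $\alpha$ is parallel with respect to the transverse connection, and $\langle\rho(\alpha),\alpha\rangle\equiv 0$ on $M$.

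At the distinguished point $p\in M$ where all transversal sectional curvatures are strictly positive, a pointwise linear-algebra argument shows that $\rho$ is strictly positive-definite on $r$-forms in the range $1<r<q$; it follows that $\alpha(p)=0$. Because $\alpha$ is both parallel and basic on the connected manifold $M$, the function $|\alpha|^{2}$ is constant, and its vanishing at $p$ forces $\alpha\equiv 0$. This shows $\widetilde{H}^{r}(M,\mathcal{F})=\{0\}$ for $1<r<q$.

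The main obstacle is the pointwise step at $p$: strict positivity of all \emph{sectional} curvatures must be leveraged into strict positivity of the full curvature endomorphism $\rho$ acting on $r$-forms, and it is precisely the combinatorics of the exterior/interior products in $\rho$ that make this argument work in the interior range $1<r<q$ while excluding the endpoints $r=1$ and $r=q-1$ (the latter cases would generically require the stronger hypothesis of positive transverse curvature operator). I note that the nontautness hypothesis is not actually needed for the above argument; if it is invoked, the previous corollary already yields the stronger conclusion $\widetilde{H}^{r}(M,\mathcal{F})=\{0\}$ in \emph{all} degrees, so the content of this corollary is the pointwise-positivity refinement in the stated range.
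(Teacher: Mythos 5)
Your proof has two pieces: a correct observation and an incorrect attempted strengthening.

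The correct piece is the one you mention at the end: given the nontautness hypothesis (which the statement does assume), the immediately preceding corollary in the paper --- ``transversal sectional curvatures nonnegative and $\mathcal{F}$ nontaut $\Rightarrow$ the twisted basic cohomology is identically zero'' --- already yields the conclusion in \emph{all} degrees, of which the stated conclusion ($1<r<q$) is a special case. Since the paper gives no separate proof of this corollary, the intended argument is precisely that it follows from the preceding one. So the entire substance of a proof here is a one-line reference, and you did identify this; you should have stopped there.

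The incorrect piece is the claimed strengthening: that the nontautness hypothesis can be dropped because, at a point $p$ where all transversal sectional curvatures are strictly positive, ``a pointwise linear-algebra argument shows that $\rho$ is strictly positive-definite on $r$-forms in the range $1<r<q$.'' That claim is false, and it is a classical fact that it is false. Positive sectional curvature does \emph{not} control the Weitzenb\"ock curvature endomorphism $\rho$ on intermediate-degree forms; what does is positivity of the full curvature \emph{operator} (Gallot--Meyer). A concrete counterexample is $\mathbb{CP}^{n}$ with the Fubini--Study metric: all sectional curvatures are strictly positive, yet the K\"ahler $2$-form $\omega$ is parallel and harmonic, so the Weitzenb\"ock identity forces $\rho(\omega)=0$ at every point, hence $\rho$ is never strictly positive-definite on $2$-forms. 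For the same reason your pointwise inequality $\langle\rho(\alpha),\alpha\rangle\ge 0$ from ``nonnegative sectional curvature'' is not justified in general for $2\le r\le q-2$. Consequently the step where you conclude $\alpha(p)=0$ at the distinguished point does not go through, the nontautness hypothesis cannot be discarded, and the remark attributing the range restriction $1<r<q$ to ``the combinatorics of the exterior/interior products in $\rho$'' is also mistaken --- the degrees genuinely excluded by the Bochner mechanism are $r=0$ and $r=q$ (where $\rho$ vanishes identically), while the intermediate degrees $2\le r\le q-2$ are precisely those for which sectional-curvature positivity is inadequate.
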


\begin{remark}
Note that the curvature bounds above are weaker than those required by
previous results in \cite{Heb}, \cite{Ju}, etc.
\end{remark}

Using the Weitzenb\"{o}ck-Bochner formula, we can give a direct proof of
Hebda's result \cite{Heb}.

\begin{theorem}
\label{Heb} Let $M$ be a compact Riemannian manifold endowed with a
Riemannian foliation. If the transversal Ricci curvature is positive, then $%
H_B^1(M,\mathcal{F})=0.$
\end{theorem}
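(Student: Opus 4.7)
The plan is to apply the Weitzenb\"{o}ck--Bochner formula of Proposition \ref{WeitzenbockProposition} twice: once to a general $\widetilde{\Delta}$-harmonic basic $1$-form to show $\widetilde{H}^1(M,\mathcal{F})=0$, and once to the basic mean curvature form $\kappa_b$ itself to force $\kappa_b=0$ and hence tautness. The Tautness Lemma immediately preceding Theorem \ref{TautnessTheorem} then upgrades the vanishing of $\widetilde{H}^1$ to the desired vanishing of $H^1_B$.

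To set up, I would invoke Corollary \ref{basicHarmonicChoiceCorollary} to fix a compatible bundle-like metric in which $\kappa$ is basic-harmonic, so $\kappa=\kappa_b$ and $\delta_b\kappa_b=0$; by Corollary \ref{indepTwBasicCohomCor} this choice does not alter the twisted basic cohomology. For any $\widetilde{\Delta}$-harmonic basic $1$-form $\alpha$, taking the $L^2$ inner product of the Bochner identity with $\alpha$ yields
\begin{equation*}
0 \;=\; \int_M\!\left(|\nabla\alpha|^2 + \mathrm{Ric}(\alpha^\#,\alpha^\#) + \tfrac{1}{4}|\kappa|^2|\alpha|^2\right),
\end{equation*}
and strict positivity of $\mathrm{Ric}$ forces $\alpha\equiv 0$. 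Hence $\widetilde{H}^1(M,\mathcal{F})=0$.

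Next I apply Proposition \ref{WeitzenbockProposition} with $\alpha=\kappa_b$. Because $d\kappa_b=0$ and $\kappa_b\wedge\kappa_b=0$ for a $1$-form, one has $\widetilde{d}\kappa_b=0$; and $\delta_b\kappa_b=0$ together with $\kappa_b\lrcorner\kappa_b=|\kappa|^2$ gives $\widetilde{\delta}\kappa_b=-\tfrac{1}{2}|\kappa|^2$. Computing $\int_M\langle\widetilde{\Delta}\kappa_b,\kappa_b\rangle$ in two ways --- via the adjointness of $\widetilde{d}$ and $\widetilde{\delta}$ on basic forms it equals $\int_M|\widetilde{\delta}\kappa_b|^2=\tfrac{1}{4}\int_M|\kappa|^4$, while the Bochner identity gives $\int_M\bigl(|\nabla\kappa_b|^2+\mathrm{Ric}(\kappa_b^\#,\kappa_b^\#)+\tfrac{1}{4}|\kappa|^4\bigr)$ --- and subtracting yields
\begin{equation*}
0 \;=\; \int_M|\nabla\kappa_b|^2 + \int_M \mathrm{Ric}(\kappa_b^\#,\kappa_b^\#).
\end{equation*}
Positive transverse Ricci curvature then forces $\kappa_b=0$, so the foliation is taut. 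The Tautness Lemma preceding Theorem \ref{TautnessTheorem} supplies $H^1_B(M,\mathcal{F})\cong\widetilde{H}^1(M,\mathcal{F})=0$, as desired.

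The main obstacle I anticipate is the bookkeeping required so that the two expressions for $\int_M\langle\widetilde{\Delta}\kappa_b,\kappa_b\rangle$ agree exactly, allowing the $|\kappa|^4$ terms to cancel; the identity $\widetilde{\delta}\kappa_b=-\tfrac{1}{2}|\kappa|^2$ is crucial and hinges on the basic-harmonic choice of metric. Once these are checked, the Weitzenb\"{o}ck--Bochner identity does essentially all of the work.
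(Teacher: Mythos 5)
Your proof is correct, but it takes a slightly longer route than the paper's, which is worth comparing. The paper applies the twisted Weitzenb\"ock--Bochner identity directly to an arbitrary $\Delta_b$-harmonic $1$-form $\alpha$ (i.e.\ $d\alpha=0$, $\delta_b\alpha=0$), computing $\int_M\langle\widetilde\Delta\alpha,\alpha\rangle$ two ways: from the adjointness of $\widetilde d,\widetilde\delta$ together with $\widetilde d\alpha=-\tfrac12\kappa\wedge\alpha$ and $\widetilde\delta\alpha=-\tfrac12\kappa\lrcorner\alpha$, one gets $\tfrac14\int_M|\kappa|^2|\alpha|^2$; from the Bochner identity one gets the same quantity plus $\int_M|\nabla\alpha|^2+\int_M\mathrm{Ric}(\alpha^\#,\alpha^\#)$. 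Cancelling yields $\alpha=0$ in one stroke, hence $H^1_B=0$ directly, with no need for the tautness lemma or any separate vanishing of $\widetilde H^1$. Your argument applied to $\alpha=\kappa_b$ is precisely the special case of this (since $\kappa_b$ is $\Delta_b$-harmonic in the chosen metric), but you did not observe that the cancellation works for every $\Delta_b$-harmonic $1$-form; instead you prove $\widetilde H^1=0$ and $\kappa_b=0$ separately and then glue via the tautness lemma. Both arguments are valid; the paper's is shorter because it bypasses the tautness discussion entirely, while yours has the modest virtue of making the tautness consequence explicit along the way. (Note also that once you conclude $\kappa_b=0$, your step proving $\widetilde H^1=0$ becomes redundant: with $\kappa_b=0$ the operators $\widetilde d$ and $d$ coincide, so $\widetilde H^1=H^1_d$ trivially, and the Weitzenb\"ock identity for $\Delta_b$ then gives $H^1_d=0$ directly.)
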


\begin{proof}
Since the basic cohomology groups are independent of the choice of the
bundle-like metric, we may assume that the mean curvature $\kappa $ is
basic-harmonic. Let $\alpha $ be a basic one-form closed and coclosed, i.e. $%
d\alpha =0$ and $\delta _{b}\alpha =0$. Then we find $\widetilde{d}\alpha =-%
\frac{1}{2}\kappa \wedge \alpha $ and $\widetilde{\delta }_{b}(\alpha )=-%
\frac{1}{2}\kappa \lrcorner \alpha .$ Thus $|\widetilde{d}\alpha |^{2}+|%
\widetilde{\delta }_{b}(\alpha )|^{2}=\frac{1}{4}|\kappa |^{2}|\alpha |^{2}.$
With the use of the Weitzenb\"{o}ck formula, we have 
\begin{equation*}
\int_{M}(\widetilde{\Delta }\alpha ,\alpha )=\frac{1}{4}\int_{M}|\kappa
|^{2}|\alpha |^{2}=\int_{M}|\nabla \alpha |^{2}+\int_{M}\mathrm{Ric}(\alpha
,\alpha )+\frac{1}{4}\int_{M}|\kappa |^{2}|\alpha |^{2}.
\end{equation*}%
Under the curvature assumption, we deduce that $\alpha =0$.
\end{proof}

\begin{corollary}
\label{strongerHebdaCorollary}Let $(M,\mathcal{F})$ be a Riemannian
foliation of a compact manifold and suppose that the transversal Ricci
curvature satisfies $\mathrm{Ric}(X,X)\geq 0$ for all $X\in \Gamma Q$ and $%
\mathrm{Ric}(X_{p},X_{p})>0$ for all nonzero $X_{p}\in \Gamma _{p}Q$ at one
point $p\in M$. Then $H_{B}^{1}(M,\mathcal{F})=0.$
\end{corollary}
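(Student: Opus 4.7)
The plan is to adapt the argument in the proof of Theorem \ref{Heb} and then exploit the parallel transport structure of the transverse Levi-Civita connection. By Corollary \ref{basicHarmonicChoiceCorollary}, we may choose the bundle-like metric so that $\kappa$ is basic-harmonic, noting that $H_{B}^{1}(M,\mathcal{F})$ is independent of that choice. By basic Hodge theory for $\Delta_{b}$, every class in $H_{B}^{1}(M,\mathcal{F})$ is represented by a basic one-form $\alpha$ with $d\alpha=0$ and $\delta_{b}\alpha=0$, so it suffices to show that any such $\alpha$ must vanish identically.

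Following the computation in the proof of Theorem \ref{Heb}, one has $\widetilde{d}\alpha=-\tfrac{1}{2}\kappa\wedge\alpha$ and $\widetilde{\delta}\alpha=-\tfrac{1}{2}\kappa\lrcorner\alpha$, so
\begin{equation*}
\int_{M}(\widetilde{\Delta}\alpha,\alpha)=\tfrac{1}{4}\int_{M}|\kappa|^{2}|\alpha|^{2}.
\end{equation*}
Applying the Weitzenb\"{o}ck--Bochner formula of Proposition \ref{WeitzenbockProposition} to the left-hand side and cancelling the $\tfrac{1}{4}|\kappa|^{2}|\alpha|^{2}$ term yields
\begin{equation*}
0=\int_{M}|\nabla\alpha|^{2}+\int_{M}\mathrm{Ric}(\alpha^{\#},\alpha^{\#}).
\end{equation*}
By the hypothesis, both integrands are pointwise nonnegative, so both vanish identically on $M$. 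Hence $\nabla\alpha\equiv 0$ and $\mathrm{Ric}(\alpha^{\#},\alpha^{\#})\equiv 0$.

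Now I would use these two conclusions together with the connectedness of $M$. Because $\nabla\alpha=0$, we have $X(|\alpha|^{2})=2(\nabla_{X}\alpha,\alpha)=0$ for every vector field $X$, so $|\alpha|$ is constant on the connected manifold $M$. On the other hand, the strict positivity of $\mathrm{Ric}$ at $p$ applied to $\alpha^{\#}_{p}$ forces $\alpha_{p}=0$. Combining these gives $|\alpha|\equiv 0$, hence $\alpha\equiv 0$, and therefore $H_{B}^{1}(M,\mathcal{F})=0$. The only mild subtlety is justifying that $\nabla\alpha=0$ implies $|\alpha|$ is constant, but this is immediate from the metric-compatibility of the transverse Levi-Civita connection $\nabla$ on $Q^{\ast}$, so no real obstacle arises; the rest is bookkeeping with the Weitzenb\"{o}ck identity already proved.
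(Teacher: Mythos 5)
Your proof is correct, and it takes a genuinely different route from the paper's. Both arguments begin the same way: choose the metric so $\kappa$ is basic-harmonic, pick a $\Delta_b$-harmonic representative $\alpha$, and use the twisted Weitzenb\"{o}ck identity from Proposition \ref{WeitzenbockProposition} to obtain
\[
0=\int_{M}|\nabla\alpha|^{2}+\int_{M}\mathrm{Ric}(\alpha^{\#},\alpha^{\#}),
\]
so both integrands vanish pointwise. At this point the paper only extracts the \emph{local} information that $\alpha\equiv 0$ on a neighborhood of $p$ (where $\mathrm{Ric}>0$), and then propagates this globally by appealing to the weak unique continuation property of $d+\delta$, after invoking \cite[Proposition~2.4]{PaRi} to pass from $d+\delta_b$ to $d+\delta$. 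You instead use the \emph{global} information $\nabla\alpha\equiv 0$: metric-compatibility of the transverse connection gives $X(|\alpha|^{2})=0$ for $X\in\Gamma(Q)$, and since $|\alpha|^{2}$ is a basic function it is also killed by leafwise vector fields, so $|\alpha|$ is constant on (connected) $M$; combining this with $\alpha_{p}=0$ (forced by the strict positivity of $\mathrm{Ric}$ at $p$) yields $\alpha\equiv 0$. Your route is more elementary --- it avoids the unique continuation theorem and the auxiliary identity relating $\delta_b$ and $\delta$ entirely --- and it literally only uses the hypothesis at the single point $p$ rather than first bootstrapping to a neighborhood. The paper's route, by contrast, illustrates a technique (unique continuation for basic forms) that the authors reuse in Proposition \ref{codim2Corollary}, which is likely why they chose it.

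One small point worth making explicit: your claim that $|\alpha|$ is constant uses that $|\alpha|^{2}$ is basic, which holds because $\alpha$ is basic and the transverse metric is holonomy-invariant; you mention metric-compatibility but not the leafwise direction, so a reader might wonder why $|\alpha|$ cannot vary along the leaves. Also, the argument implicitly assumes $M$ is connected (as does the paper). Neither is a gap, just a detail to state.
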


\begin{proof}
With the weaker hypothesis, $\mathrm{Ric}(X,X)>0$ for all unit normal
vectors $X$ to the foliation on a neighborhood of $p$. If $\alpha $ is a
closed and coclosed basic one-form, by the previous proof $\alpha $ is zero
on that neighborhood. Since $\left( d+\delta _{b}\right) \alpha =0$, by \cite%
[Proposition 2.4]{PaRi} we have%
\begin{equation*}
\left( d+P\delta \right) \alpha =\left( d+\delta -\varphi _{0}\lrcorner \chi
_{\mathcal{F}}\wedge \right) \alpha =\left( d+\delta \right) \alpha =0,
\end{equation*}%
where $P:L^{2}\left( \Omega \left( M\right) \right) \rightarrow L^{2}\left(
\Omega \left( M,\mathcal{F}\right) \right) $ is the orthogonal projection, $%
\delta $ is the ordinary $L^{2}$ adjoint of $d$ on all forms, and $\varphi
_{0}$, $\chi _{\mathcal{F}}$ are from Rummler's formula (\ref{RummlerFormula}%
). The operator $d+\delta $ is a linear, first order elliptic operator that
satisfies the weak unique continuation property (see \cite{BooL}, \cite{Bar}%
, \cite{Ar}, \cite{Cor}). This means that since $\alpha $ is zero on an open
set, it is identically zero on all of $M$.
\end{proof}

We also find a direct proof for the following theorem established in \cite%
{ElKacSerg} (see also \cite{KT3}, \cite{KTduality}).

\begin{theorem}
\label{EK}Let $M$ be a compact, connected manifold endowed with a Riemannian
foliation $\mathcal{F}$. The top-dimensional basic cohomology is either
isomorphic to $0$ or $\mathbb{R}$.
\end{theorem}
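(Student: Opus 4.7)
The plan is to identify $H_d^q(M,\mathcal{F})$ with the kernel of a first-order equation on basic functions, and then to bound the dimension of that kernel by a one-variable ODE argument.

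First I reduce to the transversally oriented case. If $\mathcal{F}$ is not transversally oriented, I pass to the connected transverse-orientation double cover $\pi : \widehat{M} \to M$: the pullback identifies $H_d^q(M,\mathcal{F})$ with the $\mathbb{Z}/2$-invariants of $H_d^q(\widehat{M},\pi^{\ast}\mathcal{F})$, and any $\pm 1$-invariant subspace of $0$ or $\mathbb{R}$ is itself $0$ or $\mathbb{R}$. So from now on I assume $\mathcal{F}$ is transversally oriented and fix a bundle-like metric. The twisted Poincar\'e duality of Kamber--Tondeur recalled in the introduction (\cite{KTduality}, \cite{To}) then gives
$$
H_d^q(M,\mathcal{F}) \;\cong\; H^0_{d - \kappa_b \wedge}(M,\mathcal{F}) = V,
$$
where $V := \{\,f \in \Omega^0(M,\mathcal{F}) : df = f\,\kappa_b\,\}$.

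To show $\dim V \leq 1$, suppose $f_2 \in V$ is nontrivial and pick $p \in M$ with $f_2(p) \neq 0$. Given any $f_1 \in V$, set $c = f_1(p)/f_2(p)$ and $g = f_1 - c f_2 \in V$, so $g(p) = 0$. Along any smooth curve $\gamma : [0,1] \to M$ with $\gamma(0) = p$, the function $h(t) := g(\gamma(t))$ satisfies the linear first-order ODE
$$
h'(t) = \kappa_b(\gamma'(t))\,h(t), \qquad h(0) = 0,
$$
whose unique solution is $h \equiv 0$. Connectedness of $M$ then forces $g \equiv 0$ on $M$, so $f_1 = c\,f_2$. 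Hence $\dim V \leq 1$, and $V$ is either $0$ or $\mathbb{R}$.

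The main subtlety will be setting up the identification with $V$ cleanly: it uses transverse orientability, the bundle-like metric, the transversal Hodge star, the formula for $\delta_b$, and the basic Hodge theorem (all collected earlier in the paper and in \cite{KTduality}, \cite{To}). Once that is done, the remaining argument is a one-dimensional uniqueness statement for a linear ODE, depending only on $d\kappa_b = 0$ and the connectedness of $M$.
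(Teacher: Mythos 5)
Your proof is correct, and it takes a genuinely different route from the paper's. The paper represents a $\Delta_b$-harmonic top basic $q$-form as $\alpha = f\nu$ (with $\nu$ the transverse volume form), notes that $\left(\rho(\alpha),\alpha\right) = 0$ since $\nu$ is parallel, then applies the Weitzenb\"ock--Bochner formula for the twisted basic Laplacian (Proposition \ref{WeitzenbockProposition}) and integrates to conclude $\nabla\alpha = 0$, hence $f$ is constant. You instead invoke the classical twisted Poincar\'e duality $H^q_d(M,\mathcal{F}) \cong H^0_{d-\kappa_b\wedge}(M,\mathcal{F})$ of Kamber--Tondeur, observe that since there are no $(-1)$-forms the latter group is exactly the solution space of the first-order equation $df = f\kappa_b$ on basic functions, and then show by ODE uniqueness along paths that this space has dimension at most one on a connected manifold. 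Your route is considerably more elementary --- it needs none of the machinery developed in this paper (no Weitzenb\"ock formula, no choice of bundle-like metric with basic-harmonic mean curvature), only the classical duality isomorphism and the closedness of $\kappa_b$; on the other hand the paper's Bochner-style argument yields extra geometric information for free, namely that the harmonic representative is $\nabla$-parallel. You also handle the non-transversally-orientable case explicitly via the transverse-orientation double cover, which is a reduction the paper leaves tacit; note that this step is genuinely needed even in the paper's argument, since the transverse volume form $\nu$ (and the transversal Hodge star $\overline{\ast}$ underlying the duality) only exists globally when $\mathcal{F}$ is transversally oriented. One tiny imprecision in your closing sentence: the ODE uniqueness step does not itself use $d\kappa_b = 0$; that identity is only needed so that $(d-\kappa_b\wedge)^2 = 0$ and the cohomology in question is defined in the first place.
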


\begin{proof}
Let $\alpha $ be a basic $q$-form closed and coclosed. Since $\alpha =f\nu $
where $\nu $ is the transverse volume form of the foliation and $f$ a basic
real-valued function on $M$, the term $(\rho (\alpha ),\alpha )=f^{2}(\rho
(\nu ),\nu )$ is equal to zero by the fact that $\nu $ is parallel and $f$
is a function. Now applying the Weitzenb\"{o}ck-Bochner formula to $\alpha $
gives that $\alpha $ is parallel, which means that $f$ is constant. If $f$
is always equal to zero, the basic cohomology is zero; otherwise it is
isomorphic to $\mathbb{R}.$
\end{proof}

In the following, we prove that the spectrum of the basic Laplacian is the
same as the twisted one under a curvature assumption.

\begin{proposition}
Let $M$ be a compact manifold endowed with a Riemannian foliation $\mathcal{F%
}$ with strictly positive transversal curvature. Then $\mathrm{spec}%
(\widetilde\Delta)=\mathrm{spec}(\Delta_b).$
\end{proposition}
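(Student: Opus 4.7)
The plan is to combine the curvature hypothesis with the already-established Hebda theorem and \`Alvarez-Lopez's characterization of tautness in order to reduce everything to the case $\kappa_b=0$, where the two Laplacians become literally equal as operators.

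First, I would invoke Theorem \ref{Heb}: strict positivity of the transversal curvature (which in particular implies strict positivity of the transversal Ricci curvature) gives $H_d^1(M,\mathcal{F})=0$. Since $\kappa_b$ is always a closed basic one-form \cite{Al}, its \`Alvarez class must vanish in $H_d^1(M,\mathcal{F})$, and hence by \cite{Al} (or Theorem \ref{TautnessTheorem}) the foliation $(M,\mathcal{F})$ is taut.

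Second, I would use Corollary \ref{basicHarmonicChoiceCorollary} to choose a bundle-like metric compatible with $g_Q$ for which $\kappa_b$ is basic-harmonic, meaning $d\kappa_b=0$ and $\delta_b\kappa_b=0$. Tautness gives $\kappa_b=df$ for some basic function $f$, so $\kappa_b$ is both $\Delta_b$-harmonic and $d$-exact. By the $L^2$-orthogonality of the three summands in the standard Hodge decomposition for the ordinary basic Laplacian $\Delta_b$ (see \cite{EKH}, \cite{PaRi}, \cite{KT1}), such a form must vanish. Hence $\kappa_b=0$ for this specific metric.

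With $\kappa_b=0$ one has $\widetilde d=d-\tfrac12\kappa_b\wedge=d$ and $\widetilde\delta=\delta_b-\tfrac12\kappa_b\lrcorner=\delta_b$, so $\widetilde\Delta=\Delta_b$ as operators on basic forms and their spectra coincide. The main subtlety, and the only point requiring care, is that $\mathrm{spec}(\Delta_b)$ depends on the choice of bundle-like metric while $\mathrm{spec}(\widetilde\Delta)$ is an invariant of $(M,\mathcal{F},g_Q)$ by Corollary \ref{indepTwBasicCohomCor}; the statement must therefore be read as asserting the equality for the particular bundle-like metric produced by Corollary \ref{basicHarmonicChoiceCorollary}. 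The substantive step is the reduction $\kappa_b=0$, which chains together Hebda's theorem, \`Alvarez-Lopez's tautness criterion, and basic Hodge theory.
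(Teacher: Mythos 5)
Your argument runs on the same track as the paper's up to the reduction to a metric with $\kappa_b=0$, but it misses the one ingredient that actually makes the proposition unconditional, and your closing caveat (that the statement ``must therefore be read as asserting the equality for the particular bundle-like metric'') is a misreading rather than a repair.

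The paper's proof invokes Mason's theorem \cite{Ma} in its stronger form: for \emph{any} bundle-like metric $g$, one can dilate along the leaves to obtain a bundle-like metric $\bar g$ whose mean curvature $\bar\kappa$ is basic-harmonic \emph{and whose basic Laplacian coincides with that of $g$}, $\Delta_b^{\bar g}=\Delta_b^{g}$. Coupled with the curvature hypothesis (which, via Hebda, forces $H^1_d(M,\mathcal F)=0$ and hence $\bar\kappa=0$ since $\bar\kappa$ is basic-harmonic), this gives $\Delta_b^{g}=\Delta_b^{\bar g}=\widetilde\Delta^{\bar g}$; the metric-independence of $\mathrm{spec}(\widetilde\Delta)$ then closes the loop and yields $\mathrm{spec}(\Delta_b^{g})=\mathrm{spec}(\widetilde\Delta^{g})$ for the \emph{original} $g$. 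Your proof instead leans on Corollary \ref{basicHarmonicChoiceCorollary}, but that corollary concerns invariance of the spectrum of the basic \emph{Dirac} operator $D_b$ (equivalently of $\widetilde\Delta$), not of $\Delta_b$; it says nothing about how $\mathrm{spec}(\Delta_b)$ changes when you pass to the harmonic-mean-curvature metric. This is exactly the gap you noticed, but the right resolution is to cite the preservation of the basic Laplacian under Mason's dilation, not to weaken the claim. Your route to $\kappa_b=0$ (tautness gives $\kappa_b=df$, basic-harmonic plus exact plus Hodge orthogonality gives $\kappa_b=0$) is fine, though the paper's is more economical: $H^1_d=0$ means there are no nonzero $\Delta_b$-harmonic one-forms at all.
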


\begin{proof}
By the Mason result \cite{Ma}, any bundle-like metric can be dilated to
another one $\bar{g}$ with basic-harmonic mean curvature $\bar{\kappa}$ and
with the same basic Laplacian. Since the transversal curvature is positive,
the first cohomology group is zero and hence $\bar{\kappa}=0$. In this case,
the operator $\Delta _{b}^{\bar{g}}=\widetilde{\Delta }^{\bar{g}}$. Using
the fact that the spectrum of $\widetilde{\Delta }^{\bar{g}}$ remains the
same for any possible change of the bundle-like, we deduce the proof of the
proposition.
\end{proof}

\begin{proposition}
\label{codim2Corollary}Let $\left( M,\mathcal{F}\right) $ be a Riemannian
foliation of codimension $2$ on a connected manifold. If the foliation is
nontaut, the basic cohomology groups satisfy $H_d^{0}\left( M,\mathcal{F}%
\right) =\ H_d^{1}\left( M,\mathcal{F}\right) =\mathbb{R}$, $H_d^{2}\left( M,%
\mathcal{F}\right) =\left\{ 0\right\} $. In all other cases, the foliation
is taut. Also, the twisted basic cohomology $\widetilde{H}^{\ast }\left( M,%
\mathcal{F}\right) $ is identically zero if and only if the foliation is
nontaut.
\end{proposition}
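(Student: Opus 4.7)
The plan is to split the argument by tautness. In the taut case, the lemma preceding Theorem~\ref{TautnessTheorem} gives $\widetilde H^{\ast }(M,\mathcal{F})\cong H_{d}^{\ast }(M,\mathcal{F})$, which is nonzero in degree $0$ since $M$ is connected; moreover Masa's theorem forces $H_{d}^{q}\neq 0$, so $H_{d}^{\ast }$ is not of the form $(\mathbb{R},\mathbb{R},0)$. This disposes of the ``in all other cases'' clause and the taut direction of both equivalences. So assume $\mathcal{F}$ is nontaut; after passing to the transverse orientation double cover if necessary, I may assume it is transversally oriented. Then Masa gives $H_{d}^{2}=0$, Theorem~\ref{TautnessTheorem} gives $\widetilde H^{0}=0$, and Theorem~\ref{PoincaredualityTheorem} gives $\widetilde H^{2}\cong \widetilde H^{0}=0$. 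Connectedness yields $H_{d}^{0}=\mathbb{R}$, and Alvarez's theorem that $[\kappa _{b}]\neq 0$ in $H_{d}^{1}$ gives $\dim H_{d}^{1}\geq 1$.

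Next, the family $D_{t}=d+\delta _{b}-\tfrac{t}{2}\kappa _{b}\lrcorner -\tfrac{t}{2}\kappa _{b}\wedge $ from the proof of Corollary~\ref{EulerZeroInOddCodimCorollary} is a deformation through transversally elliptic basic-form-preserving operators connecting $D_{0}=d+\delta _{b}$ to $D_{1}=D_{b}$, so its basic index is constant in $t$ and hence $\chi _{d}(M,\mathcal{F})=\chi _{\widetilde d}(M,\mathcal{F})$. Combined with the vanishings above this collapses to
\[
\dim H_{d}^{1}(M,\mathcal{F})=1+\dim \widetilde H^{1}(M,\mathcal{F}),
\]
so both remaining assertions of the proposition reduce to the single vanishing $\widetilde H^{1}(M,\mathcal{F})=0$ (the forward direction $\widetilde H^{\ast }=0\Rightarrow $ nontaut is already free from Theorem~\ref{TautnessTheorem} applied to $\widetilde H^{0}$).

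The main obstacle is this last vanishing. I would choose, via Corollary~\ref{basicHarmonicChoiceCorollary}, a bundle-like metric in which $\kappa $ is basic-harmonic, let $\alpha $ be $\widetilde \Delta $-harmonic, and exploit the codimension-two fact that $J:=\overline{\ast }$ is a parallel norm-preserving complex structure on basic $1$-forms (since $\overline{\ast }^{2}=-1$ there), so that $J\alpha $ is also $\widetilde \Delta $-harmonic by Theorem~\ref{PoincaredualityTheorem}. Applying the Weitzenb\"ock--Bochner identity of Proposition~\ref{WeitzenbockProposition} to both $\alpha $ and $J\alpha $, integrating, and summing uses the codimension-two identity $\mathrm{Ric}(\alpha ^{\sharp },\alpha ^{\sharp })+\mathrm{Ric}((J\alpha )^{\sharp },(J\alpha )^{\sharp })=\mathrm{Scal}_{T}|\alpha |^{2}$ together with $|\nabla J\alpha |=|\nabla \alpha |$ to give
\[
2\int _{M}|\nabla \alpha |^{2}+\int _{M}\mathrm{Scal}_{T}\,|\alpha |^{2}+\tfrac{1}{2}\int _{M}|\kappa |^{2}|\alpha |^{2}=0.
\]
The delicate point, which I expect to be the real obstacle, is controlling the sign of the indefinite $\mathrm{Scal}_{T}$-term; to do so I would leverage the specific structure of codimension-two nontaut Riemannian foliations (the closure foliation $\overline{\mathcal{F}}$ has codimension at most one, which after reduction lets the strictly positive $\tfrac{1}{2}\int |\kappa |^{2}|\alpha |^{2}$ contribution dominate on the one-variable closure leaf space), or, alternatively, combine the displayed identity with a foliated Chern--Gauss--Bonnet integration to force $\alpha \equiv 0$. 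Once this is achieved, $\widetilde H^{1}=0$, so $\dim H_{d}^{1}=1$ with the Alvarez class as generator, and all remaining assertions of the proposition follow.
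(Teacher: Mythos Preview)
Your reduction is sound: the taut case is handled as you say, and in the nontaut (transversally oriented) case the equality $\chi_{d}=\chi_{\widetilde d}$ together with $\widetilde H^{0}=\widetilde H^{2}=0$ and $H_{d}^{0}=\mathbb{R}$, $H_{d}^{2}=0$ does collapse everything to the single vanishing $\widetilde H^{1}(M,\mathcal{F})=0$.

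The gap is precisely there, and it is genuine. Your Weitzenb\"ock identity
\[
2\int_{M}|\nabla\alpha|^{2}+\int_{M}\mathrm{Scal}_{T}\,|\alpha|^{2}+\tfrac{1}{2}\int_{M}|\kappa|^{2}|\alpha|^{2}=0
\]
is correct, but it only forces $\alpha=0$ when $\mathrm{Scal}_{T}\geq 0$, and there is no reason for this to hold: the transverse scalar curvature of a nontaut codimension-two Riemannian foliation can be negative (the Carri\`ere example has $\mathrm{Scal}_{T}<0$ everywhere). Your two proposed rescues are not arguments. Saying ``the closure foliation has codimension at most one'' is true but does not by itself bound the negative $\mathrm{Scal}_{T}$ term against the positive $|\kappa|^{2}$ term; and ``combine with a foliated Chern--Gauss--Bonnet'' is only a slogan, since no such basic Gauss--Bonnet theorem in the required form is available in the paper or in the references.

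The paper's proof does not go through Weitzenb\"ock at all. It computes $\chi(M,\mathcal{F})$ directly via the basic Hopf index theorem applied to the vector field $H=\kappa^{\sharp}$, after choosing $\kappa$ basic-harmonic. The key input is analytic: since $(d+\delta_{b})\kappa=0$ is, by the formula from \cite{PaRi}, actually $(d+\delta)\kappa=0$, the weak unique continuation property and B\"ar's theorem force the zero set of $\kappa$ to have codimension $\geq 2$, hence to consist of at most isolated leaf closures. One then argues from the local Molino picture that any such isolated zero is a closed leaf at which $H$ has a source/sink singularity of index $+1$. Thus either $\kappa$ is nowhere zero (Hopf gives $\chi=0$) or $\chi>0$, the latter contradicting $\chi=1-\dim H_{d}^{1}\leq 0$. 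This yields $\chi=0$ and hence $\dim H_{d}^{1}=1$, from which your own reduction gives $\widetilde H^{1}=0$. If you want to salvage your write-up, replace the Weitzenb\"ock paragraph with this Hopf-index/unique-continuation argument.
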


\begin{proof}
Suppose that the foliation is nontaut, so that $H_{d}^{0}\left( M,\mathcal{F}%
\right) \cong \mathbb{R}$, $H_{d}^{2}\left( M,\mathcal{F}\right) =\left\{
0\right\} $. Let $\kappa $ be chosen to be basic harmonic as in Corollary %
\ref{basicHarmonicChoiceCorollary}. Since $\left( d+\delta _{b}\right)
\kappa =0$, by the same argument as in Corollary \ref{strongerHebdaCorollary}%
, the weak unique continuation property implies that if $\kappa $ were zero
on an open set, it would be identically zero on $M$. Since $\left[ \kappa %
\right] \in H_{d}^{1}\left( M,\mathcal{F}\right) $ is nontrivial, the set on
which $\kappa $ is nonzero is open and dense. Furthermore, by \cite{Bar} we
know that the zero set of $\kappa $ is codimension two or more. By Rummler's
Theorem (\cite{Rum}), not all leaves are closed. Since $M$ is connected, the
principal stratum of the foliation must be saturated by noncompact leaves.
In the case where there are no compact leaves, the zero set of $\kappa $ is
one or less, so that $\kappa $ would have to be nonzero. Then, by the basic
Hopf index theorem \cite{BePaRi}, the basic Euler characteristic would have
to be zero, so that 
$\ H_{d}^{1}\left( M,\mathcal{F}\right) \cong \mathbb{R}$. 

The only remaining case is where $\kappa $ is zero at a finite number of
isolated closed leaves. If we use $H=\kappa ^{\#}$ as the basic normal
vector field in the basic Hopf index theorem \cite{BePaRi}, it remains to
calculate the sign of the determinant of the matrix $\left( a_{ij}\right)
=\left( \nabla _{e_{i}}H,e_{j}\right) $, i.e. the type of singularity of $H$
at each singular point. Since the leaf closures near $\kappa =0$ are
codimension one, the space of leaf closures looks locally like concentric
circles around the origin in $\mathbb{R}^{2}$. Because $\kappa $ is basic,
it must be either a source or a sink, which in both cases implies the index
of $H$ at the singular leaf is $1$. Since no orientation issues occur, we
see that the basic Euler characteristic is the number of singular leaves, a
positive number. On the other hand, 
\begin{eqnarray*}
\chi \left( M,\mathcal{F}\right) &=&\dim H_d^{0}\left( M,\mathcal{F}\right)
-\dim H_d^{1}\left( M,\mathcal{F}\right) +\dim H_d^{2}\left( M,\mathcal{F}%
\right) \\
&=&1-\dim H_d^{1}\left( M,\mathcal{F}\right) \leq 0.
\end{eqnarray*}%
We conclude that this last case cannot occur, and the basic Euler
characteristic of the foliation must be zero. The result follows.
\end{proof}

\begin{corollary}
\label{sigcodim2} The basic Euler characteristic and basic signature of a
nontaut Riemannian foliation of codimension two are zero.
\end{corollary}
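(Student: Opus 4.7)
The plan is to deduce both vanishing statements as immediate consequences of Proposition \ref{codim2Corollary}, together with the Hodge-theoretic identifications already established.

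For the basic Euler characteristic, I would simply compute directly. Proposition \ref{codim2Corollary} gives the dimensions of the ordinary basic cohomology for a nontaut codimension-two Riemannian foliation:
\[
\dim H_d^0(M,\mathcal{F}) = 1, \quad \dim H_d^1(M,\mathcal{F}) = 1, \quad \dim H_d^2(M,\mathcal{F}) = 0.
\]
Hence $\chi(M,\mathcal{F}) = 1 - 1 + 0 = 0$. Alternatively, since the proof of Corollary \ref{EulerZeroInOddCodimCorollary} shows via the homotopy $D_t = d + \delta_b - \tfrac{t}{2}\kappa_b\lrcorner - \tfrac{t}{2}\kappa_b\wedge$ that the basic Euler characteristic equals the Euler characteristic of the $\widetilde{d}$-complex, I could alternatively invoke the second half of Proposition \ref{codim2Corollary} (which says all $\widetilde{H}^k(M,\mathcal{F})$ are zero for nontaut codimension-two foliations) to conclude $\chi(M,\mathcal{F}) = \sum_k (-1)^k \dim \widetilde{H}^k(M,\mathcal{F}) = 0$.

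For the basic signature, I would use the twisted Hodge theorem. Recall that by Proposition \ref{Hodge Theorem} and its corollary, $\widetilde{H}^k(M,\mathcal{F}) \cong \ker \widetilde{\Delta}^k$. By the second part of Proposition \ref{codim2Corollary}, all twisted basic cohomology groups vanish in the nontaut case, so $\ker \widetilde{\Delta} = \{0\}$ as a subspace of $\Omega^*(M,\mathcal{F})$. Since the decomposition $\Omega^*(M,\mathcal{F}) = \Omega^+(M,\mathcal{F}) \oplus \Omega^-(M,\mathcal{F})$ into $\bigstar$-eigenspaces restricts to $\ker \widetilde{\Delta}$ (by the proof of Theorem \ref{PoincaredualityTheorem}, since $\bigstar$ commutes with $\widetilde{\Delta}$ up to the sign conventions packaged into $\bigstar$), both $\ker(\widetilde{\Delta}|_{\Omega^+}) = 0$ and $\ker(\widetilde{\Delta}|_{\Omega^-}) = 0$. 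By definition,
\[
\sigma(M,\mathcal{F}) = \dim \ker\bigl(\widetilde{\Delta}|_{\Omega^+(M,\mathcal{F})}\bigr) - \dim \ker\bigl(\widetilde{\Delta}|_{\Omega^-(M,\mathcal{F})}\bigr) = 0.
\]

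There is really no obstacle; the corollary is a direct bookkeeping consequence of the preceding proposition. The only point to verify carefully is that $\bigstar$ commutes with $\widetilde{\Delta}$ so that the kernel decomposes according to $\bigstar$-eigenspaces, but this was already shown in the proof of Theorem \ref{PoincaredualityTheorem} (since $\bigstar$ differs from $\overline{\ast}$ only by a scalar on each degree).
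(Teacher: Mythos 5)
Your proof is correct and is essentially the only natural deduction, which is why the paper states this corollary without its own proof: the Euler characteristic vanishing follows by alternating-sum bookkeeping from the dimensions in Proposition \ref{codim2Corollary} (or, equivalently, from the vanishing of the twisted cohomology together with the homotopy argument in Corollary \ref{EulerZeroInOddCodimCorollary}), and the signature vanishing follows because $\ker\widetilde{\Delta}=0$ forces both $\ker(\widetilde{\Delta}|_{\Omega^{\pm}})$ to vanish. One small remark: once you know $\ker\widetilde{\Delta}=\{0\}$ outright, you do not actually need the commutation of $\bigstar$ with $\widetilde{\Delta}$ or the $\pm$-decomposition of the kernel --- the restrictions of $\widetilde{\Delta}$ to $\Omega^{\pm}$ trivially have zero kernel --- so that paragraph, while harmless, is superfluous.
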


\begin{remark}
In Section \ref{ExampleSection}, we show that it is possible to construct
nontaut Riemannian foliations of higher codimension with nonzero twisted
basic cohomology and nonzero twisted basic Euler characteristics.
\end{remark}






Recall that a group $G$ is \textbf{polycyclic} if there exists a finite
sequence of nested subgroups $1\vartriangleleft G_{1}\vartriangleleft
...\vartriangleleft G_{k}=G$ such that all factor groups are cyclic.

\begin{corollary}
\label{deformationCorollary}Suppose that $\left( M,\mathcal{F}\right) $ is a
nontaut Riemannian foliation of codimension two, and $\pi _{1}\left(
M\right) $ is polycyclic or has polynomial growth. Then the basic Euler
characteristic and basic signature are stable with respect to deformations
of $\left( M,\mathcal{F}\right) $ through continuous families of Riemannian
foliations, and in fact the dimensions of all basic cohomology groups are
also stable.
\end{corollary}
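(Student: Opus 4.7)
My plan is to combine the rigidity of tautness under continuous deformations (via Nozawa's theorem) with the explicit description of basic cohomology for nontaut codimension-two Riemannian foliations given in Proposition \ref{codim2Corollary}. The entire argument is essentially a bookkeeping exercise once these two results are in place.

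First, I would invoke the result of Nozawa mentioned in the introduction (from \cite{Noz1}, \cite{Noz2}): if $\pi_1(M)$ is polycyclic or has polynomial growth, then the line integral of $\kappa_b$ over any closed curve is an algebraic integer, and the \`Alvarez class $[\kappa_b]\in H^1(M)$ varies continuously in the given family. Since algebraic integers form a discrete subset and the class depends continuously on the foliation, the property $[\kappa_b]=0$ (equivalently, tautness) is both open and closed along the deformation. Consequently, a continuous family based at a nontaut $(M,\mathcal{F})$ consists entirely of nontaut Riemannian foliations.

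Next, for each foliation $\mathcal{F}_t$ in the family, the codimension remains equal to two (codimension is a discrete invariant of the deformation), and $M$ remains connected. Thus Proposition \ref{codim2Corollary} applies to every $\mathcal{F}_t$, giving
\begin{equation*}
H_d^{0}(M,\mathcal{F}_t)\cong\mathbb{R},\qquad H_d^{1}(M,\mathcal{F}_t)\cong\mathbb{R},\qquad H_d^{2}(M,\mathcal{F}_t)=\{0\}
\end{equation*}
for every $t$. In particular the dimensions of the basic cohomology groups are constant in $t$, which is exactly the stability statement. The basic Euler characteristic is then $\chi(M,\mathcal{F}_t)=1-1+0=0$ throughout the family, and Corollary \ref{sigcodim2} gives $\sigma(M,\mathcal{F}_t)=0$ as well, so both invariants are trivially stable.

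The one subtle point I would want to check carefully is the precise notion of ``continuous family of Riemannian foliations'' used by Nozawa, and that the continuity hypothesis transfers to continuity of $[\kappa_b]$ in $H^1(M;\mathbb{R})$; once that is granted the discreteness-of-algebraic-integers argument is immediate. Everything else is a direct substitution into Proposition \ref{codim2Corollary} and Corollary \ref{sigcodim2}, so there is no real analytic obstacle.
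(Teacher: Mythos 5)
Your argument is correct and follows exactly the paper's route: invoke Nozawa's rigidity result to conclude the family stays nontaut, then apply Proposition \ref{codim2Corollary} and Corollary \ref{sigcodim2} to pin down the basic Betti numbers, Euler characteristic, and signature uniformly along the family. The only difference is that you spell out the algebraic-integer/discreteness mechanism behind Nozawa's theorem, whereas the paper simply cites it; this is harmless elaboration, not a divergent approach.
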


\begin{proof}
In \cite{Noz1}, Nozawa showed that nontautness is preserved in families of
Riemannian foliations on such manifolds. The two previous corollaries imply
the result.
\end{proof}

\begin{remark}
Note that in general the dimensions of basic cohomology groups are not
stable under such deformations; see \cite[Example 7.0.4]{Noz2} for a simple
example. However, $\dim H_d^{0}\left( M,\mathcal{F}\right) $ and $\dim
H_d^{q}\left( M,\mathcal{F}\right) $ are stable with respect to deformations
if $\pi _{1}\left( M\right) $ is polycyclic or has polynomial growth, as
implied by the discussion above.
\end{remark}





\begin{remark}
Because the twisted basic cohomology and ordinary basic cohomology groups
are independent of the choices of bundle-like metric and transverse
Riemannian structure (see Corollary \ref{indepTwBasicCohomCor}), we note
that the vanishing theorems in this section may be restated in terms of the
existence of bundle-like metrics with the required properties.
\end{remark}

\section{Examples\label{ExampleSection}}

\subsection{The Carri\`{e}re example}

We will compute the cohomology groups of the Carri\`{e}re example from \cite%
{Ca} in the $3$-dimensional case. Let $A$ be a matrix in $\mathrm{SL}_{2}(%
\mathbb{Z})$ of trace strictly greater than $2$. We denote respectively by $%
V_{1}$ and $V_{2}$ the eigenvectors associated with the eigenvalues $\lambda 
$ and $\frac{1}{\lambda }$ of $A$ with $\lambda >1$ irrational. Let the
hyperbolic torus $\mathbb{T}_{A}^{3}$ be the quotient of $\mathbb{T}%
^{2}\times \mathbb{R}$ by the equivalence relation which identifies $(m,t)$
to $(A(m),t+1)$. The flow generated by the vector field $V_{2}$ is a
transversally Lie foliation of the affine group. We denote by $K$ the
holonomy subgroup. The affine group is the Lie group $\mathbb{R}^{2}$ with
multiplication $(t,s).(t^{\prime },s^{\prime })=(t+t^{\prime },\lambda
^{t}s^{\prime }+s)$, and the subgroup $K$ is 
\begin{equation*}
K=\{(n,s),n\in \mathbb{Z},s\in \mathbb{R}\}.
\end{equation*}%
We choose the bundle-like metric (letting $\left( x,s,t\right) $ denote the
local coordinates in the $V_{2}$ direction, $V_{1}$ direction, and $\mathbb{R%
}$ direction, respectively) as 
\begin{equation*}
g=\lambda ^{-2t}dx^{2}+\lambda ^{2t}ds^{2}+dt^{2}.
\end{equation*}%
We will show that the twisted cohomology groups all vanish. First, we notice
that the mean curvature of the flow is $\kappa =\kappa _{b}=\log \left(
\lambda \right) dt$, since $\chi _{\mathcal{F}}=\lambda ^{-t}dx$ is the
characteristic form and $d\chi _{\mathcal{F}}=-\log \left( \lambda \right)
\lambda ^{-t}dt\wedge dx=-\kappa \wedge \chi _{\mathcal{F}}$. Since the flow
is nontaut, we have $\widetilde{H}^{0}\left( M,\mathcal{F}\right) \cong 
\widetilde{H}^{2}\left( M,\mathcal{F}\right) =0$ by Theorem \ref%
{TautnessTheorem}. We now show directly that $\widetilde{H}^{1}\left( M,%
\mathcal{F}\right) =0$ (allthough this is guaranted by Proposition \ref%
{codim2Corollary}). The $1$-forms $\alpha =dt$ and $\beta =\lambda ^{t}ds$
are left invariant. Every $K$-invariant $1$-form $\omega $ can be written as 
$\omega =f(t)\alpha +g(t)\beta $, where $f$ and $g $ are periodic functions.
For any $\widetilde{d}$-closed basic $1$-form $\omega,$ we have 
\begin{eqnarray*}
d\omega &=&\left( g^{\prime }(t)+g\log \left( \lambda \right) \right) \alpha
\wedge \beta \\
&=&\frac{1}{2}g\log \left( \lambda \right) \alpha \wedge \beta =\frac{1}{2}%
\kappa _{b}\wedge \omega .
\end{eqnarray*}%
We then deduce that $g^{\prime }=-\frac{1}{2}\log \left( \lambda \right) g$,
or $g=c\lambda ^{-\frac{t}{2}}$ for some $c\in \mathbb{R}$. Since $g$ is
periodic, it is zero. If $\omega $ is also $\widetilde{\delta }$-coclosed, 
\begin{eqnarray*}
\delta _{b}\omega &=&\delta _{b}(f\alpha )=-\alpha (f)+f\delta _{b}(\alpha )
\\
&=&-f^{\prime }(t)+f\log \left( \lambda \right) =\frac{1}{2}f\log \left(
\lambda \right) =\frac{1}{2}\kappa _{b}\lrcorner (f\alpha ).
\end{eqnarray*}%
The solution is again reduced to zero for periodic functions $f$. Thus, the
first twisted cohomology group is zero. \hfill $\square $

\subsection{Nontautness and nontrivial twisted cohomology}

In this example, the Riemannian foliation is nontaut, and the twisted basic
cohomology and basic Euler characteristic are nontrivial.

First, let $S^{1}=\mathbb{R}\diagup \mathbb{Z}$, and let $T^{2}=\mathbb{R}%
^{2}\diagup \mathbb{Z}^{2}$, with flat metrics to be chosen later. Consider
the manifold $X=\mathbb{R}\times _{\varphi }T^{2}$, a suspension of $T^{2}$
and a $T^{2}$ bundle over $S^{1}$, constructed using the identification:%
\begin{equation*}
\varphi \left( \widetilde{x},\left( a,b\right) \right) =\left( \widetilde{x}%
+1,\left( -a,-b\right) \right)
\end{equation*}%
for all $\widetilde{x}\in \mathbb{R}$, $\left( a,b\right) \in T^{2}=\mathbb{R%
}^{2}\diagup \mathbb{Z}^{2}$. We now exhibit a Riemannian foliation of $X$,
constructed as follows. First, observe that $\varphi$ is an
orientation-preserving isometry of $T^{2}$, for any given flat metric.
Observe that the lines in $T^{2}$ with slope $\frac{3+\sqrt{5}}{2}$
(parallel to one eigenvector of the matrix $\left( 
\begin{array}{cc}
1 & 1 \\ 
1 & 2%
\end{array}%
\right) $ ) are preserved by these isometries. For $b_{0}\in \mathbb{R}%
\diagup \mathbb{Z}$, the sets of the form%
\begin{equation*}
\widetilde{L_{b_{0}}}=\left\{ \left( \widetilde{x},\left( a,b\right) \right)
:\widetilde{x}\in \mathbb{R},a\in \mathbb{R}\diagup \mathbb{Z},b=\frac{3+%
\sqrt{5}}{2}a+b_{0}\right\} \subset \widetilde{X}=\mathbb{R}\times T^{2}
\end{equation*}%
form a Riemannian foliation $\widetilde{\mathcal{F}_{X}}$. Then the sets%
\begin{eqnarray*}
L_{b_{0}} &:&=\widetilde{L_{b_{0}}}\diagup \sim \\
\left( \widetilde{x},\left( a,b\right) \right) &\sim &\varphi \left( 
\widetilde{x},\left( a,b\right) \right)
\end{eqnarray*}%
form a Riemannian foliation $\mathcal{F}_{X}$ of the quotient $X=\mathbb{R}%
\times _{\varphi }T^{2}$ that is not transversally oriented, again for any
flat metrics. Note that $L_{b_{0}}=L_{b}$ for any $b$ in the orbit of $b_{0}$
via the action generated by $b\mapsto \frac{3+\sqrt{5}}{2}+b$, $b\mapsto -b$%
. Note that this Riemannian foliation $\mathcal{F}_{X}$ is dense in $X$, and
that it admits no basic vector fields or basic one-forms.

Next, let $Y$ be a surface of genus 2 with universal cover $\widetilde{Y}=%
\mathbb{H}$. Then $\pi _{1}\left( Y\right) $ is a group with presentation $%
\left\langle A,B,C,D:ABCDA^{-1}B^{-1}C^{-1}D^{-1}=1\right\rangle $. We
define the homomorphism%
\begin{equation*}
\widetilde{\psi }:\pi _{1}\left( Y\right) \rightarrow Diff\left( \widetilde{X%
},\widetilde{\mathcal{F}_{X}}\right)
\end{equation*}%
from $\pi _{1}\left( Y\right) $ to the group of foliated diffeomorphisms of $%
\left( \widetilde{X},\widetilde{\mathcal{F}_{X}}\right) $ defined by 
\begin{eqnarray*}
\widetilde{\psi }\left( A\right) \left( \widetilde{x},\left( a,b\right)
\right) &=&\left( \widetilde{x},\left( a+b,a+2b\right) \right) , \\
\psi \left( B\right) &=&\psi \left( C\right) =\psi \left( D\right) =\mathbf{1%
}\text{.}
\end{eqnarray*}%
Since $\widetilde{L_{b_{0}}}$ consists of lines parallel to one eigenvector
of $\left( 
\begin{array}{cc}
1 & 1 \\ 
1 & 2%
\end{array}%
\right) $, $\widetilde{\psi }\left( A\right) $ maps leaves of $\widetilde{%
\mathcal{F}_{X}}$ to themselves and commutes with the action $\varphi $, and
thus it descends to a homomorphism%
\begin{eqnarray*}
\psi &:&\pi _{1}\left( Y\right) \rightarrow Diff\left( X,\mathcal{F}%
_{X}\right) , \\
\psi \left( g\right) \left[ \left( \widetilde{x},\left( a,b\right) \right) %
\right] &:&=\left[ \widetilde{\psi }\left( g\right) \left( \widetilde{x}%
,\left( a,b\right) \right) \right] ,~g\in \pi _{1}\left( Y\right) .
\end{eqnarray*}%
Now we form the suspension%
\begin{eqnarray*}
M &=&\widetilde{Y}\times _{\psi }X \\
&=&\left\{ \left[ \left( \widetilde{y}^{\prime },x^{\prime }\right) :\left( 
\widetilde{y}^{\prime },x^{\prime }\right) =\left( g\widetilde{y},\psi
\left( g\right) x\right) \text{ for some }g\in \pi _{1}\left( Y\right) %
\right] :\left( \widetilde{y},x\right) \in \widetilde{Y}\times X\right\} ,
\end{eqnarray*}%
which is naturally endowed with the foliation $\mathcal{F}_{M}~$whose leaves
are equivalence classes $\left[ \left( \widetilde{y},L_{X}\right) \right]
\in \widetilde{Y}\times _{\psi }\mathcal{F}_{X}$ with $L_{X}\in \mathcal{F}%
_{X}$ (not unique in $L_{X}$). This foliation will be a Riemannian foliation
if we pull back any metric on $Y$ via $\widetilde{Y}\times _{\psi
}X\rightarrow Y$ and choose a metric on each fiber ($\cong X$) that is flat.
Note that we will need to modify the fiberwise metric as a function of $y\in
Y$ so that $\psi \left( g\right) $ acts by transverse isometries on the
fibers. Specifically, let $\lambda =\frac{3+\sqrt{5}}{2}$ be one eigenvalue
corresponding to the eigenvector $V_{1}=\left( 1,\frac{1+\sqrt{5}}{2}\right)
^{T}$ of $\left( 
\begin{array}{cc}
1 & 1 \\ 
1 & 2%
\end{array}%
\right) $, and let $V_{2}$ denote the other eigenvector corresponding to
eigenvalue $\lambda ^{-1}$. Choose a specific smooth closed curve $%
u\rightarrow \gamma _{A}\left( u\right) \in Y$ corresponding to $A\in \pi
_{1}\left( Y\right) $, with $\gamma _{A}\left( 0\right) =\gamma _{A}\left(
1\right) $. Letting $t_{1}$ and $t_{2}$ denote the (lifted) coordinates of $%
T^{2}$ corresponding to directions $V_{1}$ and $V_{2}$ respectively, $x$ the
coordinate on $S^{1}$, choose 
\begin{equation*}
ds^{2}=du^{2}+dx^{2}+\lambda ^{2u}dt_{1}^{2}+\lambda ^{-2u}dt_{2}^{2}
\end{equation*}%
to be the metric on the submanifold $\pi ^{-1}\left( \gamma _{A}\right) $,
where $\pi :M=\widetilde{Y}\times _{\psi }X\rightarrow Y$ is the projection.
Similarly choose metrics along paths $\gamma _{B}$, $\gamma _{C}$, $\gamma
_{D}$, but this time guaranteeing that the torus metrics on $\left(
t_{1},t_{2}\right) $ agree after traversing the circle (as well as on
intersections coming from the other curves). Then we extend the metric to a
metric on $M$ in any way so that it is fiberwise flat and that the metrics
on the horizontal submanifolds ($\widetilde{Y}$ parameter slices) are
pullbacks of metrics on $Y$. The resulting metric will be a bundle-like
metric for $\left( M,\mathcal{F}_{M}\right) $. The metric along the leaves
may then be modified so that the mean curvature form $\kappa $ is
basic-harmonic and is thus a harmonic one-form, and it is the pullback of a
one-form on $Y$. By doing a line integral along $\gamma _{A}$ we see that $%
\kappa $ determines a nontrivial class in $H^{1}\left( M\right) $, in fact
in $H^{1}\left( Y\right) $. Thus $\left( M,\mathcal{F}_{M}\right) $ is
nontaut.

By construction there are no basic forms except constants on $X$, and thus
every basic form on the codimension three foliation $\left( M,\mathcal{F}%
_{M}\right) $ is an element of $\Omega ^{\ast }\left( Y\right) $. Thus, the
ordinary basic cohomology groups are%
\begin{eqnarray*}
\dim H_{d}^{0}\left( M,\mathcal{F}_{M}\right) &=&1,~\dim H_{d}^{1}\left( M,%
\mathcal{F}_{M}\right) =4 \\
\dim H_{d}^{2}\left( M,\mathcal{F}_{M}\right) &=&1,~\dim H_{d}^{3}\left( M,%
\mathcal{F}_{M}\right) =0.
\end{eqnarray*}%
Then the basic Euler characteristic satisfies $\chi \left( M,\mathcal{F}%
_{M}\right) =-2$, so that the twisted basic cohomology groups are%
\begin{eqnarray*}
\dim \widetilde{H}^{0}\left( M,\mathcal{F}_{M}\right) &=&0,~\dim \widetilde{H%
}^{1}\left( M,\mathcal{F}_{M}\right) =\widetilde{h}^{1} \\
\dim \widetilde{H}^{2}\left( M,\mathcal{F}_{M}\right) &=&\widetilde{h}%
^{2},~\dim \widetilde{H}^{3}\left( M,\mathcal{F}_{M}\right) =0,
\end{eqnarray*}%
where $\widetilde{h}^{2}-\widetilde{h}^{1}=-2$. Note that Poincar\'{e}
duality is not satisfied, because $\left( M,\mathcal{F}_{M}\right) $ is not
transversally oriented. We have $\widetilde{h}^{2}\geq 0$, $\widetilde{h}%
^{1}\geq 2$.

\subsection{A transversally oriented example}

We now modify the previous example to produce a transversally oriented
Riemannian foliation that is nontaut and has nontrivial twisted basic
cohomology.

First, let $N$ be the connected sum of two copies of $S^{1}\times S^{2}$,
which has the property that $\pi _{1}\left( N\right) =\mathbb{Z}\ast \mathbb{%
Z}$, the free group on two generators $\alpha _{1}$ and $\alpha _{2}$. Let $%
\widetilde{N}$ be the universal cover of $N$, on which $\pi _{1}\left(
N\right) $ acts by deck transformations. Let $T^{3}=\mathbb{R}^{3}\diagup 
\mathbb{Z}^{3}$, with a flat metric to be specified later. Choose $\eta \in 
\mathbb{R}\setminus \mathbb{Q}$ . Consider the manifold 
\begin{eqnarray*}
X &=&\widetilde{N}\times _{\varphi }T^{3}=\widetilde{N}\times T^{3}\diagup
\sim \\
\left( \widetilde{x},\left( a,b,c\right) \right) &\sim &\left( \beta 
\widetilde{x},\varphi \left( \beta \right) \left( a,b,c\right) \right)
,~\beta \in \pi _{1}\left( N\right) ,
\end{eqnarray*}
a suspension of $T^{3}$ and a $T^{3}$ bundle over $N$, constructed using the
homomorphism $\varphi :\pi _{1}\left( N\right) \rightarrow $\textrm{Isom}$%
\left( T^{3}\right) $ generated by 
\begin{eqnarray*}
\varphi \left( \alpha _{1}\right) \left( a,b,c\right) &=&\left(
-a,-b,-c\right) \\
\varphi \left( \alpha _{2}\right) \left( a,b,c\right) &=&\left( a,b,c+\eta
\right)
\end{eqnarray*}%
for all $\left( a,b,c\right) \in T^{3}=\mathbb{R}^{3}\diagup \mathbb{Z}^{3}$%
. We now exhibit a Riemannian foliation of $X$, constructed as follows.
First, observe that each $\varphi \left( \beta \right) $ is an isometry of $%
T^{3}$, for any given flat metric. Observe that the lines in $T^{3}$
parallel to $\left( 1,\frac{1+\sqrt{5}}{2},0\right) ^{T}$, one eigenvector
of the matrix $B=\left( 
\begin{array}{ccc}
1 & 1 & 0 \\ 
1 & 2 & 0 \\ 
0 & 0 & 1%
\end{array}%
\right) $, are preserved by these isometries. For $\left( b_{0},c_{0}\right)
\in \mathbb{R}^{2}\diagup \mathbb{Z}^{2}$, the sets of the form%
\begin{equation*}
\widetilde{L_{b_{0},c_{0}}}=\left\{ \left( \widetilde{x},\left(
a,b_{0},c_{0}\right) \right) :\widetilde{x}\in \widetilde{N},a\in \mathbb{R}%
\diagup \mathbb{Z},b=\frac{3+\sqrt{5}}{2}a+b_{0}\right\} \subset X^{\prime }=%
\widetilde{N}\times T^{3}
\end{equation*}%
form a Riemannian foliation $\widetilde{\mathcal{F}_{X}}$. Then the sets%
\begin{equation*}
L_{b_{0},c_{0}}:=\widetilde{L_{b_{0},c_{0}}}\diagup \sim
\end{equation*}%
form a Riemannian foliation $\mathcal{F}_{X}$ of the quotient $X=\widetilde{N%
}\times _{\varphi }T^{3}$ that is transversally oriented, again for any flat
metrics. (We see that $\varphi \left( \alpha _{1}\right) $, although
orientation-reversing as a map from $T^{3}$ to itself, is transversally
orientation preserving.) Note that $L_{b_{0},c_{0}}=L_{b,c}$ for any $\left(
b,c\right) $ in the orbit of $\left( b_{0},c_{0}\right) $ via the action
generated by $b\mapsto \frac{3+\sqrt{5}}{2}+b$, $\left( b,c\right) \mapsto
\left( -b,-c\right) $, $c\mapsto c+\eta $. Note that this codimension two
Riemannian foliation $\mathcal{F}_{X}$ is dense in $X$, and that it admits
no basic vector fields or basic one-forms. The only basic forms for this
foliation are the constant functions and constant multiples of the
transverse volume form.

Next, let $Y$ be a surface of genus 2 with universal cover $\widetilde{Y}=%
\mathbb{H}$. Then $\pi _{1}\left( Y\right) $ is a group with presentation $%
\left\langle A,B,C,D:ABCDA^{-1}B^{-1}C^{-1}D^{-1}=1\right\rangle $. We
define the homomorphism%
\begin{equation*}
\widetilde{\psi }:\pi _{1}\left( Y\right) \rightarrow Diff\left( X^{\prime },%
\widetilde{\mathcal{F}_{X}}\right)
\end{equation*}%
from $\pi _{1}\left( Y\right) $ to the group of foliated diffeomorphisms of $%
\left( X^{\prime },\widetilde{\mathcal{F}_{X}}\right) $ defined by 
\begin{eqnarray*}
\widetilde{\psi }\left( A\right) \left( \widetilde{x},\left( a,b,c\right)
\right) &=&\left( \widetilde{x},\left( a+b,a+2b,c\right) \right) , \\
\psi \left( B\right) &=&\psi \left( C\right) =\psi \left( D\right) =\mathbf{1%
}\text{.}
\end{eqnarray*}%
Since $\widetilde{L_{b_{0},c_{0}}}$ consists of lines parallel to one
eigenvector of the matrix $B$, $\widetilde{\psi }\left( A\right) $ maps
leaves of $\widetilde{\mathcal{F}_{X}}$ to themselves and commutes with the
action $\varphi $, and thus it descends to a homomorphism%
\begin{eqnarray*}
\psi &:&\pi _{1}\left( Y\right) \rightarrow Diff\left( X,\mathcal{F}%
_{X}\right) , \\
\psi \left( g\right) \left[ \left( \widetilde{x},\left( a,b,c\right) \right) %
\right] &:&=\left[ \widetilde{\psi }\left( g\right) \left( \widetilde{x}%
,\left( a,b,c\right) \right) \right] ,~g\in \pi _{1}\left( Y\right).
\end{eqnarray*}%
Now we form the suspension%
\begin{eqnarray*}
M &=&\widetilde{Y}\times _{\psi }X \\
&=&\left\{ \left[ \left( \widetilde{y}^{\prime },x^{\prime }\right) :\left( 
\widetilde{y}^{\prime },x^{\prime }\right) =\left( g\widetilde{y},\psi
\left( g\right) x\right) \text{ for some }g\in \pi _{1}\left( Y\right) %
\right] :\left( \widetilde{y},x\right) \in \widetilde{Y}\times X\right\} ,
\end{eqnarray*}%
which is naturally endowed with the foliation $\mathcal{F}_{M}~$whose leaves
are equivalence classes $\left[ \left( \widetilde{y},L_{X}\right) \right]
\in \widetilde{Y}\times _{\psi }\mathcal{F}_{X}$ with $L_{X}\in \mathcal{F}%
_{X}$ (not unique in $L_{X}$). This foliation will be a Riemannian foliation
if we pull back any metric on $Y$ via $\widetilde{Y}\times _{\psi
}X\rightarrow Y$ and choose a metric on each fiber ($\cong X$) that is
transversally flat. Note that we will need to modify the fiberwise metric as
a function of $y\in Y$ so that $\psi \left( g\right) $ acts by transverse
isometries on the fibers. Specifically, let $\lambda =\frac{3+\sqrt{5}}{2}$
be one eigenvalue corresponding to the eigenvector $V_{1}=\left( 1,\frac{1+%
\sqrt{5}}{2},0\right) ^{T}$ of $B$, let $V_{2}$ denote the other eigenvector
corresponding to eigenvalue $\lambda ^{-1}$, and let $e_{3}=\left(
0,0,1\right) ^{T}$. Choose a specific smooth closed curve $u\rightarrow
\gamma _{A}\left( u\right) \in Y$ corresponding to $A\in \pi _{1}\left(
Y\right) $, with $\gamma _{A}\left( 0\right) =\gamma _{A}\left( 1\right) $.
Letting $t_{1}$, $t_{2}$, $t_{3}$ denote the (lifted) coordinates of $T^{3}$
corresponding to directions $V_{1}$, $V_{2}$, $e_{3}$ respectively, $x$ the
coordinate on $N$, choose 
\begin{equation*}
ds^{2}=du^{2}+dx^{2}+\lambda ^{2u}dt_{1}^{2}+\lambda
^{-2u}dt_{2}^{2}+dt_{3}^{2}
\end{equation*}%
to be the metric on the submanifold $\pi ^{-1}\left( \gamma _{A}\right) $,
where $\pi :M=\widetilde{Y}\times _{\psi }X\rightarrow Y$ is the projection.
Similarly choose metrics along paths $\gamma _{B}$, $\gamma _{C}$, $\gamma
_{D}$, but this time guaranteeing that the torus metrics on $\left(
t_{1},t_{2},t_{3}\right) $ agree after traversing the circle (as well as on
intersections coming from the other curves. Then we extend the metric to a
metric on $M$ in any way so that it is fiberwise flat and that the metrics
on the horizontal submanifolds ($\widetilde{Y}$ parameter slices) are
pullbacks of metrics on $Y$. The resulting metric will be a bundle-like
metric for $\left( M,\mathcal{F}_{M}\right) $. The metric along the leaves
may then be modified so that the mean curvature form $\kappa $ is
basic-harmonic and is thus a harmonic one-form, and it is the pullback of a
one-form on $Y$. By doing a line integral along $\gamma _{A}$ we see that $%
\kappa $ determines a nontrivial class in $H^{1}\left( M\right) $, in fact
in $H^{1}\left( Y\right) $. Thus $\left( M,\mathcal{F}_{M}\right) $ is
nontaut.

By construction there are no basic forms except constants and constant
multiples of the transverse volume form $\nu _{X}$ on $X$, and thus every
basic form on the codimension four foliation $\left( M,\mathcal{F}%
_{M}\right) $ is of the form $\omega _{1}+\omega _{2}\wedge \nu _{X}$ with $%
\omega _{1},\omega _{2}\in \Omega ^{\ast }\left( Y\right) $. Thus, the
ordinary basic cohomology groups are%
\begin{eqnarray*}
\dim H_{d}^{0}\left( M,\mathcal{F}_{M}\right) &=&1,~\dim H_{d}^{1}\left( M,%
\mathcal{F}_{M}\right) =4 \\
\dim H_{d}^{2}\left( M,\mathcal{F}_{M}\right) &=&2,~\dim H_{d}^{3}\left( M,%
\mathcal{F}_{M}\right) =4,~\dim H_{d}^{4}\left( M,\mathcal{F}_{M}\right) =1.
\end{eqnarray*}%
Then the basic Euler characteristic satisfies $\chi \left( M,\mathcal{F}%
_{M}\right) =-4$, so that the twisted basic cohomology groups are%
\begin{eqnarray*}
\dim \widetilde{H}^{0}\left( M,\mathcal{F}_{M}\right) &=&0,~\dim \widetilde{H%
}^{1}\left( M,\mathcal{F}_{M}\right) =\widetilde{h}^{1} \\
\dim \widetilde{H}^{2}\left( M,\mathcal{F}_{M}\right) &=&\widetilde{h}%
^{2},~\dim \widetilde{H}^{3}\left( M,\mathcal{F}_{M}\right) =\widetilde{h}%
^{3}=\widetilde{h}^{1},\dim \widetilde{H}^{4}\left( M,\mathcal{F}_{M}\right)
=0,
\end{eqnarray*}%
where $\widetilde{h}^{2}-2\widetilde{h}^{1}=-4$. Since the mean curvature
form as a form on $Y$ agrees with the mean curvature form in the previous
example and because the basic one-forms are the same, we must have $%
\widetilde{h}^{3}=\widetilde{h}^{1}\geq 2$, $\widetilde{h}^{2}\geq 0$ .

\end{document}